\documentclass[11pt,a4paper]{article}
\usepackage[T1]{fontenc}
\usepackage[utf8]{inputenc}
\usepackage{amsmath,amsthm,mathtools}
\usepackage{newtxtext,newtxmath}
\usepackage[margin=28mm,headheight=15pt]{geometry}
\usepackage{microtype}
\usepackage{enumitem,needspace}
\usepackage[numbers,sort&compress]{natbib}
\usepackage[colorlinks=true,linkcolor=black,citecolor=black,urlcolor=black]{hyperref}
\usepackage{fancyhdr}
\usepackage{tikz}
\usepackage{placeins}
\numberwithin{equation}{section}
\newtheorem{theorem}{Theorem}[section]
\newtheorem{proposition}[theorem]{Proposition}
\newtheorem{lemma}[theorem]{Lemma}
\newtheorem{corollary}[theorem]{Corollary}
\theoremstyle{remark}
\newtheorem{remark}{Remark}

\newcommand{\R}{\mathbb R}
\newcommand{\Sph}{\mathbb S^{N-1}}
\newcommand{\nS}{\nabla_{\!S}}
\newcommand{\HS}{\nabla_{\!S}^{\,2}}
\newcommand{\DS}{\Delta_S}
\newcommand{\gS}{g_S}
\newcommand{\UU}{U_*}
\newcommand{\RR}{R_0}
\newcommand{\PP}{\mathcal P}
\newcommand{\MA}{\mathcal M}
\newcommand{\id}{\mathrm{Id}}
\DeclareMathOperator{\supp}{supp}

\title{\bfseries Whole-space eventual log-concavity and level-set geometry\\
	for the multidimensional Fisher--KPP equation}
\author{
	Xing Liang\thanks{School of Mathematical Sciences,
		University of Science and Technology of China,
		Hefei, Anhui 230026, China.
		The author is supported by NSFC12531008, NSFC12331006 and XDB0900100. 
		Email: \texttt{xliang@ustc.edu.cn}.}
	\and
	Ziqi Zhen\thanks{School of Mathematical Sciences,
		University of Science and Technology of China,
		Hefei, Anhui 230026, China.
		Email: \texttt{wy39108797936@mail.ustc.edu.cn}.}
}
\date{}
\hypersetup{pdftitle={Whole-space eventual log-concavity and level-set geometry for the multidimensional Fisher-KPP equation},
	pdfsubject={Whole-space logarithmic Hessian bounds, level-set convexity, front geometry, and Gaussian exterior asymptotics},
	pdfauthor={Xing Liang and Ziqi Zhen},pdfkeywords={Fisher-KPP; eventual log-concavity; level-set convexity; front asymptotics; logarithmic Hessian}}
\begin{document}
	\maketitle
	\vspace{-1.4em}
	\begin{abstract}
		We establish eventual strict log-concavity and quantitative level-set curvature bounds for the Fisher–KPP equation \(u_t=\Delta u+u-u^2\) in \(\mathbb R^N\), \(N\geq2\), with nontrivial, nonnegative, smooth compactly supported initial data. Specifically, there exist \(T > 1\) and \(c>0\) such that \(D_x^2\log u(t,x)\leq-(c/t)e^{-2(\sqrt2-1)t}\mathrm{Id}\) for all \(t\geq T\) and \(x\in\mathbb R^N\).
		Consequently, after a single waiting time, the solution has exactly one critical point, which is a nondegenerate global maximizer, and every positive level set below the maximum is smooth, closed, and strictly convex with positive principal curvatures. Moreover, for levels in any fixed closed interval \(K\subset(0,1)\), the principal curvatures are uniformly bounded below by \((1-C\log t/t)/(2t)\) for all sufficiently large \(t\).
		The proof combines one-sided second angular derivative estimates for the fixed-front region, absorptive comparison in the invaded interior, leading-edge barriers and blow-up arguments through the diffusive scale, and uniform Gaussian amplitude estimates in the exterior.
	\end{abstract}
	\noindent\textbf{Keywords.} Fisher--KPP equation; eventual log-concavity; level-set convexity;
	front asymptotics; logarithmic Hessian.
	
	\medskip
	\noindent\textbf{2020 Mathematics Subject Classification.}
	35E10, 35B09, 35K57, 35K15, 35C07.
	
	\section{Introduction}\label{sec:intro}
	
	\subsection{Large-time asymptotics and geometric questions}
	
	Consider the Fisher--KPP equation
	\begin{equation}\label{eq:kpp}
		u_t=\Delta u+u-u^2,\qquad t>1,\quad x\in\R^N,
		\qquad u(1,x)=u_0(x).
	\end{equation}
	Unless explicitly stated otherwise, \(N\ge2\), and the initial datum satisfies
	\begin{equation}\label{eq:data}
		u_0\in C_c^\infty(\R^N),\qquad
		u_0\ge0,\qquad u_0\not\equiv0.
	\end{equation}
	Choose \(R_*>0\) such that \(\supp u_0\subset B_{R_*}(0)\).
	We take \(t=1\) as the initial time to simplify the logarithmic changes of variables used below.
	
	The equation describes invasion of the unstable state \(0\) by the
	stable state \(1\). Fisher \citep{Fisher1937} and Kolmogorov, Petrovskii
	and Piskunov \citep{KPP1937} introduced its travelling fronts, and
	Aronson and Weinberger \citep{AW1978} established spreading at speed
	\(2\) for compactly supported data. Bramson \citep{Bramson1983}, by
	probabilistic methods, identified the logarithmic delay of the critical
	front in one space dimension. In several dimensions, G{\"a}rtner
	\citep{Gartner1982}, also by probabilistic methods, determined the front
	location up to an \(O(1)\) term, including the dimension-dependent
	logarithmic correction. A PDE approach to the logarithmic correction
	was developed by Hamel, Nolen, Roquejoffre and Ryzhik
	\citep{HNRR2013}; Ducrot \citep{Ducrot2015} later adapted this PDE
	argument to the multidimensional setting. Nolen, Roquejoffre and
	Ryzhik \citep{NRR2017} subsequently refined the one-dimensional PDE
	analysis to prove convergence to a single travelling wave.
	The same authors later rigorously established the universal $-3\sqrt{\pi}\,t^{-1/2}$ correction to the one-dimensional front position through matched asymptotic analysis \citep{NRR2019}.

	Our analysis builds substantially on the sharp large-time asymptotics
	for the multidimensional Fisher--KPP equation obtained by Roquejoffre,
	Rossi and Roussier-Michon \citep{RRM2019}.
	Let \(\UU\) be the decreasing critical travelling wave, with a fixed
	normalization:
	\begin{equation}\label{eq:wave}
		\UU''+2\UU'+\UU-\UU^2=0,\qquad
		\UU'<0,\qquad \UU(-\infty)=1,\quad \UU(+\infty)=0.
	\end{equation}
	Theorem~1.1 of \citet{RRM2019} is stated under
	their normalized assumption~(3), while the remark
	on p.~7268 extends its conclusion to \eqref{eq:data}.
	Choose \(x_0\in\mathbb R^N\) such that \(u_0\)
	is bounded below by a positive constant on a ball
	centered at \(x_0\), and set
	\(\widetilde u(t,y)=u(t,y+x_0)\).
	Let \(\widetilde s^\infty\in\operatorname{Lip}(\Sph)\) be its directional phase described in \cite[Theorem~1.1]{RRM2019}.
	For \(x=\rho\omega\), we have
	\[
	|x-x_0|+
	\widetilde s^\infty\left(
	\frac{x-x_0}{|x-x_0|}\right)
	=\rho+\widetilde s^\infty(\omega)
	-x_0\cdot\omega+O(\rho^{-1}),
	\]
	uniformly in \(\omega\) as \(\rho\to\infty\).
	Thus the phase relative to the fixed origin is
	\(s^\infty(\omega)=
	\widetilde s^\infty(\omega)-x_0\cdot\omega
	\in\operatorname{Lip}(\Sph)\).
	Set
	\begin{equation}\label{eq:R0}
		k=\frac{N+2}{2},\qquad \RR(t)=2t-k\log t.
	\end{equation}
	The uniform front asymptotics of \cite[Theorem~1.1]{RRM2019} then take the form
	\begin{equation}\label{eq:rrm}
		u(t,\rho\omega)=
		\UU\bigl(\rho-\RR(t)+s^\infty(\omega)\bigr)+o(1)
		\qquad(t\to\infty),
	\end{equation}
	uniformly for \(\rho>0\) and \(\omega\in\Sph\).
	A finite change in the time origin only changes the limiting phase.
	
	The core of the proof of Theorem~1.1 in \citet{RRM2019} is inspired by
	ideas from the one-dimensional PDE analysis of Nolen, Roquejoffre and
	Ryzhik \citep{NRR2017}. In the present work, we will also use several further ingredients
	from \citep{RRM2019}: self-similar asymptotics in the diffusive zone,
	radial barriers, and estimates for first angular derivatives.
	The use of the diffusive-zone asymptotics under \eqref{eq:data} is explained at the beginning of
	Section~\ref{cont:sec:through-diffusive}.
	
	Several works have studied the geometry of expanding level sets.
	Jones \citep{Jones1983} proved, using a reflection argument, that for sufficiently large times the normal line at every regular point of a level set intersects the convex hull of the initial support.
	Rossi \citep{Rossi2017} used this property to prove eventual
	star-shapedness and a uniform \(O(1)\) bound on the radial oscillation
	of level sets. More recently, Hamel and Rossi \citep{HamelRossi2025} studied asymptotic local one-dimensional symmetry for classes of possibly unbounded initial supports.

	Along a different line of research, estimates on principal curvatures provide a finer description of level-set geometry.
	Chang, Ma and Yang \citep[Theorem~1.1]{ChangMaYang2010}
	established a boundary minimum principle for a
	gradient-weighted least principal curvature
	of strictly convex level sets for certain semilinear
	elliptic equations.
	For the heat equation on convex rings,
	Chen, Ma and Salani \citep{ChenMaSalani2019}
	proved positivity of spatial principal curvatures
	under space-time quasiconcavity, $u_t>0$,
	and suitable boundary conditions.

	Brascamp and Lieb \citep{BrascampLieb1976}
	showed that the heat equation on convex domains
	preserves log-concavity of the initial data.
	On bounded strongly convex domains,
	Greco and Kawohl \citep{GrecoKawohl1999}
	obtained related results for semilinear parabolic
	equations, and Gallo, Moraschi and Squassina
	\citep{GalloMoraschiSquassina2026} for
	variable-coefficient logistic equations, under
	suitable structural and boundary assumptions.
	These results concern the preservation of an initial
	concavity structure, whereas we impose no concavity
	assumption on the initial data.

	Eventual concavity without initial concavity assumptions has also been studied by Lee and Vázquez. For the whole-space porous medium equation, they established eventual strict concavity of the pressure through higher-order convergence to the self-similar Barenblatt profile \citep{LeeVazquez2003}. They subsequently used concavity properties of asymptotic profiles and higher-order convergence to establish eventual concavity results for suitable transforms of solutions to the heat and porous medium equations  on bounded strictly convex domains \citep{LeeVazquez2008}. 
	For the whole-space heat equation, Lee and Vázquez \citep[Theorem~5.1]{LeeVazquez2003} also proved eventual log-concavity for nonnegative compactly supported data. Ishige \citep{Ishige2024} further studied more general eventual concavity properties of the heat flow.

	Returning to the Fisher–KPP equation, the known geometric properties of its level sets, including eventual star-shapedness and a uniform \(O(1)\) bound on their radial oscillation, do not imply convexity, even when combined with the sharper front asymptotics \eqref{eq:rrm}. We therefore ask whether positive superlevel
	sets eventually become strictly convex for initial data satisfying \eqref{eq:data},
	and seek quantitative lower bounds for their
	principal curvatures on fixed compact level
	intervals in $(0,1)$.
	
	In the homogeneous Fisher--KPP model, the superlevel set
	$\{x\in\R^N:u(t,x)>\lambda\}$ represents the region
	where population density exceeds a threshold $\lambda$.
	Eventual convexity rules out inward indentations
	of this region, rather than merely making them
	negligible relative to its expansion.
	We further ask whether this holds simultaneously
	for all nonempty positive superlevel sets after
	a single waiting time, independent of the threshold.
	A stronger question is whether $u(t,\cdot)$
	eventually becomes strictly log-concave throughout
	$\R^N$.
	Unlike the curvature bounds on fixed compact level
	intervals, this whole-space property also concerns
	the low-density region far ahead of the front.

	The front asymptotics \eqref{eq:rrm} do not by themselves provide the second-order information needed for either quantitative curvature bounds or whole-space log-concavity. In particular, the directional phase \(s^\infty\) is only known to be Lipschitz, so we cannot obtain the second-order angular estimates needed for curvature bounds simply by differentiating the limiting profile. Moreover, uniform convergence does not yield the relative second-order control required for the logarithmic Hessian where \(u\) is small. We obtain these estimates from the parabolic equation.

	\subsection{Main results}
	
	Write
	\[
	M(t)=\max_{x\in\R^N}u(t,x).
	\]
	At each \(t>1\), spatial decay and the strong maximum principle
	give \(0<M(t)<\infty\).
	Moreover, \(M(t)<1\) for all
	sufficiently large \(t\), as shown at the beginning of Section~2.
	Hessian inequalities below are inequalities of
	quadratic forms with respect to the Euclidean metric unless the
	spherical metric is specified.
	Principal curvatures are taken with the outward normal and the sign
	convention that a sphere has positive curvatures.
	
	Set
	\begin{equation}\label{eq:beta-star}
		\beta_*=2(\sqrt2-1)\in(0,1).
	\end{equation}
	The principal result is eventual strict log-concavity on the whole
	space.
	\begin{theorem}[Whole-space eventual strict log-concavity]
		\label{cont:thm:whole-space}
		Under \eqref{eq:data}, there are \(T>1\) and \(c>0\) such that
		\begin{equation}\label{cont:eq:whole-space-hessian}
			D_x^2\log u(t,x)
			\le-\frac{c}{t}e^{-\beta_*t}\id
			\qquad(t\ge T,\ x\in\R^N).
		\end{equation}
		In particular, \(u(t,\cdot)\) is strictly log-concave on \(\R^N\)
		for every \(t\ge T\).
	\end{theorem}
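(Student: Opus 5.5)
The plan is to split $\R^N$ at each large time into regions and prove the Hessian bound for $w=\log u$ separately in each. The abstract already names them: the invaded interior $\{|x|\le \RR(t)-L\}$; the fixed-front region $\{|\,|x|-\RR(t)|\le L\}$; the leading edge and diffusive zone $\{\RR(t)+L\le |x|\le \delta t^{?}\ \text{to}\ A t\}$; and the Gaussian exterior $\{|x|\ge At\}$. The rate $e^{-\beta_* t}/t$ should come from the exterior. There $u\approx e^{t}(4\pi t)^{-N/2}\int e^{-|x-y|^2/4t}u_*(y)\,dy$, and the Hessian of $\log$ of a heat-type kernel is $-\id/(2t)$ plus a correction. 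The correction is exponentially small relative to $1/t$ when $|x|/t$ is large. I expect the threshold $|x|=\sqrt2\cdot 2t$ (speed where $u\sim e^{-(|x|^2/4t-t)}$ balances $u^2$) to produce $\beta_*=2(\sqrt2-1)$ as the worst-case relative error exponent. More concretely, the nonlinear correction $u^2$ contributes to $D^2\log u$ terms of size $\sup u\sim e^{-\beta_* t}$ in the matching region. So in every region the target is: $D^2 w\le -(c/t)\id$ except where the only available concavity is exponentially degenerate.

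In the interior, $u\to1$ and I would use absorptive comparison. Set $v=1-u$, which solves $v_t=\Delta v-v(1-v)$ with damping. Writing $D^2\log u=D^2u/u-\nabla u\otimes\nabla u/u^2$, I would show that $D^2 v$ decays like $e^{-ct}$ in directions tangent to spheres, while $v$ itself has radial profile $\approx \UU$ convexity. This is a problem, since near $u=1$ the quantity $-D^2 u$ is tiny and of either sign. I therefore expect the interior bound to hold only at order $e^{-\beta_* t}$ at the center. Coupled with the global maximum structure, it is proved by a maximum principle for the largest eigenvalue of $D^2 w$. The equation for $w$ is $w_t=\Delta w+|\nabla w|^2+1-e^{w}$, and $\lambda_{\max}(D^2w)$ is a viscosity subsolution of $\lambda_t\le\Delta\lambda+2\nabla w\cdot\nabla\lambda+2\lambda^2-e^{w}\lambda$ type. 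The term $-e^w\lambda$ gives absorption where $u$ is near $1$, pushing positive eigenvalues down. I would run this comparison on the interior with boundary data supplied by the front estimates.

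In the fixed-front region I would use polar coordinates. The radial second derivative is controlled by $(\log\UU)''<0$ via \eqref{eq:rrm} together with parabolic regularity, which upgrades the convergence to $C^2$. The angular part needs one-sided bounds $\HS w\le$ (something) on second angular derivatives. Then $D^2 w$ restricted to tangent directions equals $\rho^{-2}\HS w+\rho^{-1}\partial_\rho w\,\gS$. Since $\partial_\rho w\approx \UU'/\UU<0$ is bounded away from $0$, the tangential part is $\le -c/t$ once $\HS w\le o(t)$. The mixed terms are handled by the first angular derivative estimates of \citep{RRM2019}, which make $\nS w=o(t)$. In the leading edge and diffusive zone, I would use barriers of the form $\rho^{-?}e^{-(\rho-\RR)}\times$(self-similar factor). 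I would also use a blow-up argument: if the bound failed at points with $|x|-\RR(t)\to\infty$, rescaling and passing to the limit would give a linearized heat-type solution with the known Dirichlet-type self-similar profile, whose log-Hessian is strictly negative. The exterior is handled by the uniform Gaussian amplitude estimates.

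The main obstacle is the one-sided second angular derivative estimate $\HS w\le o(t)\gS$ in the front region, because $s^\infty$ is only Lipschitz. My first attempt would be to differentiate the equation twice in angle. Rotations commute with $\Delta$, so for rotation generators $X$, $X^2u$ satisfies a linear equation with source $-2(Xu)^2\le0$, a favorable sign. I would then apply a maximum principle on the front region to $X^2u/u$ plus a quadratic correction, using the first-derivative bounds as boundary input.
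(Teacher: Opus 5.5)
\paragraph{The step you flag as the main obstacle is where the argument fails.} You correctly spot the favorable sign: $X_A^2u$ solves $(\partial_t-\Delta)X_A^2u=(1-2u)X_A^2u-2(X_Au)^2$, which is also the paper's starting point. But a maximum principle on the front region, with first-derivative bounds as boundary input, cannot give $\HS\log u=o(t)$.

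The only boundary information those bounds provide is $|X_A^2u|=|(Ax)\cdot\nabla_xX_Au|\le C\rho\le Ct$. So $X_A^2u/u\le C_Lt$ on the boundary of a strip $|\rho-\RR(t)|\le L$, and the maximum principle, with whatever absorption or quadratic correction you add, returns at best $C_Lt$ inside. That is exactly the size at which the tangential block $\rho^{-1}q_\rho+\rho^{-2}\HS q$ of $D_x^2q$ (with $q=\log u$, $\rho\approx2t$) loses its sign. Moving the boundary outward makes the data worse, since $u$ is small there, and $X_A^2u/u$ is unbounded as $|x|\to\infty$, so an unweighted whole-space comparison has no bounded data to start from.

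The missing idea is the paper's weighted global comparison with an effective Dirichlet rear boundary:
\begin{itemize}
\item Write $u=\sqrt t\,e^{-r}e^{-\xi^2/8}W$ with $\xi=(r+3\log t)/\sqrt t$.
\item Place the rear boundary at $r=-3\log t$. There the factor $e^{r}=t^{-3}$ turns the crude bound into $|X_A^2W|\le Ct^{-5/2}$.
\item Compare $X_A^2W$ on the whole unbounded half-line $\xi\ge0$ (justified by finite-time Gaussian tails) with a supersolution $\Phi$ of $\PP$ built from the Dirichlet ground state $\xi e^{-\xi^2/8}$ of $\MA$. This $\Phi$ satisfies $\Phi(\tau,0)\ge ct^{-1/2}$ and $\Phi\le C(t^{-1/2}+\xi)$.
\item Undoing the transform gives $X_A^2u\le Ce^{-r}(1+r+3\log t)$, hence $\HS u\le C_L\log t\,\gS$ in fixed-front regions.
\end{itemize}
In the leading edge your blow-up cannot replace this. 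It only yields $\rho^{-2}\HS\log u=o(r^{-2})$, which does not beat $\rho^{-1}q_\rho\approx-1/(2t)$ when $r\ll\sqrt t$. The paper obtains $\HS\log u\le C_K(1+\log t/r)\gS$ from the same weighted bound together with the two-sided estimate $c_Kr\le e^ru\le C(1+r)$, which you also do not establish.

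\paragraph{The near-critical band is not covered.} The band $\sqrt t\ll|x|-2t\ll t$, between your blow-up region and your exterior $|x|\ge At$, has no mechanism. The blow-up onto $\mathfrak c(\omega)\eta e^{-\eta^2/4}$ identifies limits only for $|x|-\RR(t)=O(\sqrt t)$, and in this band the nonlinear amplitude degenerates (write $p=x/(2\sigma)$, $\chi=|p|-1\downarrow0$). The paper needs $\chi$-explicit estimates there:
\[
\mathfrak a(p)\ge c\chi\ \text{(Feynman--Kac)},\qquad |\nabla_p\mathfrak a|\le C,\qquad |\mathfrak a_\sigma-\mathfrak a|\le C/(\sigma\chi).
\]
With these, the exact factorization $\mathcal W=\mathcal K_\theta\,\mathfrak a_{\sigma\theta}(p')/\mathfrak a(p)$ has relative error $O(1/(\chi\sqrt\sigma)+1/(\chi^2\sigma))$ on diffusive cylinders. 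This is small only because the exterior is taken to be $|x|\ge2\sigma+2K_{\rm ext}\sqrt\sigma$ with $K_{\rm ext}$ large, which overlaps the diffusive region. Schauder estimates then give $D_x^2\log u\le-\id/(4\sigma)$.

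\paragraph{The rate $e^{-\beta_*t}$ comes from the interior, not the exterior.} The exterior, leading edge and front all give bounds of order $-1/t$. Your heuristic also does not produce $\beta_*$: at $|x|=2\sqrt2\,t$, $u$ is of order $e^{-t}$ up to powers of $t$.

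For the interior you give no barrier. Your inequality for $\lambda_{\max}(D_x^2\log u)$ also carries a drift $2\nabla\log u\cdot\nabla\lambda$ and a term $2\lambda^2$ that would have to be controlled. The paper instead works with $h_e=u_{ee}$, whose equation $(h_e)_t-\Delta h_e+(2u-1)h_e=-2u_e^2$ is linear and drift-free. It compares $h_e$ with $Me^{-\gamma(t-T)}-(A/t)e^{-\beta_*t}\frac1N\sum_j\cosh(\mu x_j)$ on the ball $|x|\le\RR(t)-L_0$, using the front bound $D_x^2u\le-(c_0/t)\id$ on its boundary. Two constraints fix the exponent:
\begin{itemize}
\item The supersolution condition with $2u-1\le1$ requires $\mu^2+\beta_*\ge1$.
\item Matching the boundary datum on a sphere of radius about $2t$ requires $\beta_*\ge2\mu$.
\end{itemize}
The optimal choice is $\mu=\sqrt2-1$, $\beta_*=2(\sqrt2-1)$. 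Finally $D_x^2\log u\le D_x^2u/u$ transfers the bound to $\log u$, since $0<u<1$.
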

	
	\begin{corollary}[Simultaneous strict convexity of all level sets below the maximum]
		\label{cont:cor:all-levels}
		Under the hypotheses of Theorem~\ref{cont:thm:whole-space},
		let \(T\) be as in that theorem. 
		Then, for every \(t\ge T\), \(u(t,\cdot)\) has exactly one critical point, which is its unique nondegenerate global maximizer.
		Every value \(0<\lambda<M(t)\)
		is regular, and the corresponding level set is smooth, closed, and
		strictly convex, with every principal curvature positive.
	\end{corollary}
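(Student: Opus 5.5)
The corollary follows from Theorem~\ref{cont:thm:whole-space} by elementary convex analysis applied to \(v=\log u(t,\cdot)\) at a fixed \(t\ge T\). The plan is: show \(v\) is uniformly strictly concave and tends to \(-\infty\) at infinity, deduce a unique nondegenerate critical point, and then read off the level-set geometry from the implicit function theorem.

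\textbf{Setup.} Fix \(t\ge T\). By the strong maximum principle, \(u(t,\cdot)>0\) on \(\R^N\), so \(v\) is smooth. By \eqref{cont:eq:whole-space-hessian}, \(D^2v\le-\delta\,\id\) with \(\delta=(c/t)e^{-\beta_*t}>0\). Integrating along segments gives, for any fixed \(x_1\),
\[
v(x)\le v(x_1)+\nabla v(x_1)\cdot(x-x_1)-\tfrac{\delta}{2}|x-x_1|^2 .
\]
Hence \(v\to-\infty\) as \(|x|\to\infty\); this is also consistent with the spatial decay of \(u\).

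\textbf{Critical points.} Since \(v\to-\infty\) at infinity, \(v\) attains its maximum. Strict concavity makes the maximizer unique and makes it the only critical point, because any critical point of a concave function is a global maximizer. Since \(\nabla u=u\nabla v\), the critical points of \(u\) and \(v\) coincide. At the critical point \(x^*\) we have
\[
D^2u(x^*)=u(x^*)\,D^2v(x^*)\le-u(x^*)\,\delta\,\id ,
\]
so \(x^*\) is a nondegenerate global maximizer of \(u\), with \(u(x^*)=M(t)\).

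\textbf{Level sets.} For \(0<\lambda<M(t)\), the level set \(\{u=\lambda\}=\{v=\log\lambda\}\) does not contain \(x^*\). It therefore contains no critical point, so \(\lambda\) is a regular value and the level set is a smooth hypersurface by the implicit function theorem. Compactness follows from the quadratic upper bound above, which bounds the set \(\{v\ge\log\lambda\}\).

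This superlevel set is convex, since it is a superlevel set of a concave function, and it has nonempty interior because it contains \(x^*\). Its boundary is the level set \(\{v=\log\lambda\}\), so the level set is a closed connected convex hypersurface. I expect the main subtlety to be this identification of the level set with the boundary of the superlevel set, and it holds precisely because \(\nabla v\neq0\) on \(\{v=\log\lambda\}\).

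\textbf{Curvatures.} With outward normal \(\nu=-\nabla v/|\nabla v|\), the second fundamental form is
\[
\mathrm{II}(X,X)=-\frac{D^2v(X,X)}{|\nabla v|}\qquad\text{for tangent } X .
\]
The sign is fixed by checking a sphere, where \(v=-|x|^2\) must give positive curvature. Using the Hessian bound, \(\mathrm{II}(X,X)\ge\delta|X|^2/|\nabla v|>0\). Since \(|\nabla v|\) is bounded on the compact level set, every principal curvature is positive, and the level set is strictly convex.
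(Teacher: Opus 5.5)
Your proof is correct and follows essentially the same route as the paper. Strict concavity of $\log u(t,\cdot)$ gives a unique, nondegenerate critical point (the global maximizer), hence regularity of every level $0<\lambda<M(t)$, convex compact superlevel sets, and positivity of $\mathrm{II}[e,e]=-D_x^2\log u[e,e]/|\nabla_x\log u|$ on tangent vectors. The only differences are cosmetic: you get attainment of the maximum and compactness from the quadratic upper bound implied by the uniform Hessian bound rather than from spatial decay, and uniqueness of the critical point from standard concavity facts rather than the paper's strict monotonicity identity for $\nabla q$.
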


	For levels in any fixed closed interval \(K\subset(0,1)\),
	we obtain sharper quantitative information: 
	angular derivative bounds and a curvature lower bound on the
	\(t^{-1}\) scale.
	We first give a description of level sets based on the front
	asymptotics \eqref{eq:rrm} of
	\citet[Theorem~1.1]{RRM2019} and the radial derivative convergence in Lemma~\ref{lem:radialC1}.
	For \(0<\lambda<1\), write
	\[
	\Gamma_\lambda(t)=\{x\in\R^N:u(t,x)=\lambda\},
	\qquad z_\lambda=\UU^{-1}(\lambda).
	\]
	For every closed interval \(K\subset(0,1)\) and all sufficiently large \(t\),
	the level sets \(\Gamma_\lambda(t)\), \(\lambda\in K\), are smooth closed
	radial graphs
	\begin{equation}\label{eq:radialmain}
		\Gamma_\lambda(t)=\{R_\lambda(t,\omega)\omega:\omega\in\Sph\},
	\end{equation}
	and their radii satisfy
	\begin{equation}\label{eq:locationmain}
		R_\lambda(t,\omega)
		=2t-k\log t+z_\lambda-s^\infty(\omega)+o(1),
	\end{equation}
	uniformly in \(\lambda\in K\) and \(\omega\in\Sph\).
	We will prove \eqref{eq:radialmain} and \eqref{eq:locationmain} in Section~\ref{sec:geometry}.
	
	Using this radial-graph representation, we then establish angular derivative estimates and a quantitative lower bound for the principal curvatures in the following theorem.
	The symbols \(\nS\) and \(\HS\) denote the gradient and covariant
	Hessian for the standard metric \(\gS\) on \(\Sph\).
	A tensor inequality involving \(\HS R\) is always an inequality
	between quadratic forms.

	\Needspace{9\baselineskip}
	
	\begin{theorem}[Quantitative curvature on compact level intervals]\label{thm:main}
		Assume \eqref{eq:data}. Let \(K\subset(0,1)\) be a closed interval.
		Then there exist constants \(T>1\) and \(C>0\), depending on \(N,u_0,K\),
		such that the radial graphs \eqref{eq:radialmain} satisfy
		\begin{equation}\label{eq:derivmain}
			|\nS R_\lambda(t,\omega)|\le C,\qquad
			\HS R_\lambda(t,\omega)\le C\log t\,\gS
		\end{equation}
		for every \(t\ge T\), \(\lambda\in K\), and \(\omega\in\Sph\).
		Moreover, each \(\Gamma_\lambda(t)\) bounds a strictly convex body,
		and its principal curvatures satisfy
		\begin{equation}\label{eq:curvmain}
			\kappa_j(t,\omega)\ge
			\frac{1-C\log t/t}{2t}>0,\qquad 1\le j\le N-1.
		\end{equation}
	\end{theorem}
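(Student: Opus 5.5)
The plan is to work entirely in polar coordinates \(x=\rho\omega\) on the front region \(\{|\rho-\RR(t)|\le L\}\) and to convert derivative bounds on \(u\) into bounds on \(R_\lambda\) through the implicit function theorem. Lemma~\ref{lem:radialC1} gives \(\partial_\rho u(t,\rho\omega)=\UU'(\rho-\RR(t)+s^\infty(\omega))+o(1)\) uniformly. For \(\lambda\in K\) the argument \(z\) stays in a compact set where \(\UU'\le-c_K<0\), so \(\partial_\rho u\le -c_K/2\) near \(\Gamma_\lambda(t)\) for large \(t\). Differentiating \(u(t,R_\lambda(t,\omega)\omega)=\lambda\) once and twice in \(\omega\) then gives
\[
\nS R_\lambda=-\frac{\nS u}{\partial_\rho u},\qquad
\HS R_\lambda=-\frac{\HS u+2\,\nS\partial_\rho u\otimes_s\nS R_\lambda+\partial_\rho^2u\,\nS R_\lambda\otimes\nS R_\lambda}{\partial_\rho u},
\]
with the angular derivatives taken at fixed \(\rho\). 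Since \(-1/\partial_\rho u>0\) is bounded, \eqref{eq:derivmain} reduces to three bounds on the front region, measured in the metric \(\gS\): \(|\nS u|\le C\); \(|\nS\partial_\rho u|\le C\), together with \(\partial_\rho^2 u\) bounded (which follows from parabolic regularity); and the one-sided bound \(\HS u\le C\log t\,\gS\).

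\textbf{Step 1 (first angular derivatives).} I would invoke the first angular derivative estimates of \citet{RRM2019}. These are obtained by applying the maximum principle to \(\partial_\theta u\), where \(\partial_\theta\) is a rotation generator; it solves the linearized equation because the equation is rotation invariant. Combined with the radial barriers, this gives \(|\nS u|\le C\) on the front region, and interior Schauder estimates upgrade it to \(|\nS\partial_\rho u|\le C\).

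\textbf{Step 2 (one-sided second angular derivatives).} This is the main obstacle. Write \(w=\partial_\theta^2u\). It satisfies
\[
w_t=\Delta w+(1-2u)w-2(\partial_\theta u)^2\le \Delta w+(1-2u)w .
\]
So \(w\) is a subsolution of the linearized equation, and only an upper bound can be expected, consistent with the one-sidedness in \eqref{eq:derivmain}. I would bound \(w^+\) in three regions.
\begin{itemize}[leftmargin=2em]
\item In the invaded interior \(u\approx1\), the coefficient \(1-2u\approx-1\) is absorptive, so comparison with a constant gives boundedness.
\item In the leading edge and diffusive zone, \(w\) is only controlled relative to \(u\). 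There I would compare with \(C\,(\log t)\,\overline u\), using the leading-edge barriers from \citet{RRM2019}. The \(\log t\) loss comes from integrating the inhomogeneous contributions over the time interval \([1,t]\) in the logarithmic variables. A blow-up argument through the diffusive scale rules out concentration of \(w/u\).
\end{itemize}
Summing over an orthonormal frame of rotations converts the bound on \(\partial_\theta^2u\) into \(\HS u\le C\log t\,\gS\) at fixed \(\rho\). Commuting rotations with the spherical covariant Hessian produces only first-order terms, which Step 1 already controls.

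\textbf{Step 3 (curvature).} For a radial graph \(\rho=R(\omega)\) I would use the standard formula for the second fundamental form. With \(W=\sqrt{R^2+|\nS R|^2}\), it is
\[
\mathrm{II}=\frac{R^2\gS+2\,\nS R\otimes\nS R-R\,\HS R}{W},
\qquad
g=R^2\gS+\nS R\otimes\nS R .
\]
By \eqref{eq:locationmain}, \(R=2t+O(\log t)\). Using \(|\nS R|\le C\) and \(\HS R\le C\log t\,\gS\), we get \(\mathrm{II}\ge (R^2-CR\log t)\gS/W\) and \(g\le (R^2+C)\gS\). Hence
\[
\kappa_j\ge\frac{R^2-CR\log t}{W\,(R^2+C)}\ge\frac{1}{R}\Bigl(1-\frac{C\log t}{t}\Bigr)\ge\frac{1-C\log t/t}{2t},
\]
where the last inequality uses \(R\le 2t\) for large \(t\), since \(-k\log t\) dominates the \(O(1)\) terms. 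Positive curvature of a closed embedded hypersurface gives strict convexity of the enclosed body by Hadamard's theorem, which completes the proof.
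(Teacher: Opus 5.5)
Your implicit-differentiation reduction, Step~1, and the Rayleigh-quotient computation in Step~3 agree with the paper. The paper gets $|\nS u|\le C(-u_\rho)$ from the Jones--Rossi reflection estimate rather than from the Kato bound, but either works. The gap is Step~2, which carries the whole difficulty of the theorem, and the mechanism you sketch does not close.

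The region split is circular. Comparison with a constant in the invaded interior needs lateral data on a sphere a fixed distance behind the front, which is exactly the fixed-front bound you are trying to prove. Neither bullet actually treats the front itself, where $1-2u$ changes sign.

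Comparing $w=X_A^2u$ with $C\log t\cdot u$ (or any barrier comparable to $u$) cannot be anchored behind the front either. There $u\approx1$, while the a~priori information is only $|X_A^2u|=|(Ax)\cdot\nabla_x(X_Au)|\le C\rho\le Ct$. Such a comparison therefore gives at best $X_A^2u\le Ct$ at the front, which destroys \eqref{eq:curvmain}.

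Your account of the $\log t$ loss is also not a mechanism. Once $-2(\partial_\theta u)^2\le0$ is discarded, no inhomogeneous term is left to integrate in $\tau=\log t$. A blow-up argument yields qualitative contradictions, not a quantitative $C\log t$ upper bound, and you identify no rigid limit for $w/u$.

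What is missing is a single comparison on all of $\{r\ge-3\log t\}$, $r=\rho-\RR(t)$, against a barrier that grows like $e^{-r}$ behind the front rather than like $u$.
\begin{itemize}
\item Write $u=\sqrt t\,e^{-r}e^{-\xi^2/8}W$ with $\xi=(r+3\log t)/\sqrt t$. Then $q_A=X_A^2W$ satisfies $\PP q_A+2BWq_A=-2B(X_AW)^2\le0$ with $BW\ge0$.
\item Hence a supersolution $\Phi$ of $\PP$ alone, the linearization at $0$, suffices. The sign change of $1-2u$ never has to be analysed.
\item The boundary $\xi=0$ acts as an effective Dirichlet boundary. The crude bound $|X_A^2u|\le Ct$ becomes $|q_A|\le Ct^{-5/2}$ there, which is dominated by $\Phi(\tau,0)\ge ct^{-1/2}$.
\item The tail $e^{-\xi^2/16}$ of $\Phi$ dominates the finite-time Gaussian tail of $q_A$. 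This justifies exhaustion by bounded cylinders.
\item Since $\Phi\le C(t^{-1/2}+\xi)$, one gets $X_A^2u\le Ce^{-r}(1+r+3\log t)$. So the $\log t$ is precisely the distance to the rear boundary; a rear boundary at $r=-t^{\delta}$ would cost $t^{\delta}$ instead.
\end{itemize}

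Finally, summing over a frame of rotations only bounds the trace $\DS u$. For the quadratic-form bound, take for each unit $v\perp\omega$ the matrix $A=v\otimes\omega-\omega\otimes v$. Its orbit through $\omega$ is a unit-speed great circle, so $X_A^2u=\HS u[v,v]$ exactly at fixed $\rho$.
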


	\subsection{Proof strategy and organization}
	\label{cont:sec:strategy}
	
	The proof covers space by four overlapping large-time regimes
	(see Figure~\ref{fig:four-regimes}). We
	begin with the fixed-front region, where
	\(r:=\rho-R_0(t)\) remains bounded and the critical travelling wave
	gives the leading profile. Farther behind the front, \(u\) is uniformly close to the stable state \(1\), so the equations for its Cartesian second derivatives have a positive absorption coefficient. Ahead of the fixed-front region, we use the self-similar leading-edge asymptotics to control the solution through the subdiffusive range and up to the diffusive scale \(r=O(\sqrt t)\). Beyond the diffusive range, we obtain a Gaussian approximation with a nonlinear amplitude. Together, these estimates yield the desired whole-space logarithmic-Hessian bound.

	In the fixed-front region, the sharp asymptotics of Roquejoffre,
	Rossi and Roussier-Michon determine the radial profile. We derive the
	corresponding first and second radial derivative convergence from
	these asymptotics and uniform spatial derivative bounds. The remaining difficulty is
	second-order angular control. 
	As noted by
	\citet[Remark~1]{RRM2019}, the Kato argument for first
	angular derivatives does not directly yield absolute
	second-derivative bounds, because the quadratic
	first-derivative term loses its favorable sign when
	taking absolute values. They suggested the possibility
	of a one-sided upper estimate, but its use remained
	unclear.

	Our approach is to establish such a one-sided estimate and use it to control the curvature of the level sets.
	For a radial graph \(\rho=R(t,\omega)\), the second fundamental form
	contains
	\[
	R^2\gS+2\,dR\otimes dR-R\HS R,
	\]
	so an upper bound for \(\HS R\) is precisely the one-sided information
	needed for a curvature lower bound. {More precisely, we first establish a one-sided estimate for second rotation derivatives, and then derive the fixed-front bound for \(\nabla_S^2u\) by choosing great-circle rotations.} Here we adapt their first-angular-derivative strategy \citep[Proposition~6]{RRM2019}, using a scalar supersolution based on their Proposition~1.
	Together with the radial nondegeneracy and the first angular, mixed and radial derivative estimates, this yields the compact-level curvature bound and the full logarithmic Hessian in the fixed-front region, without requiring second angular derivatives of the limiting phase \(s^\infty\).

	In the invaded interior, for a fixed unit vector \(e\), the Cartesian
	second derivative \(h_e=u_{ee}\) satisfies
	\[
	(h_e)_t-\Delta h_e+(2u-1)h_e=-2(u_e)^2\le0.
	\]
	Here \(u\) is uniformly close to the stable state \(1\), so the
	zeroth-order term is absorptive. Using the negative ordinary Hessian on a moving sphere behind the
	front as lateral boundary data, an explicit comparison controls any
	initially positive second derivatives and eventually yields Hessian
	negativity throughout the enclosed ball, without assuming initial
	concavity.
	
	To pass from the fixed front to the diffusive scale, we prove two-sided bounds \(v\asymp r\) for the conjugated solution \(v=e^r u\), where \(r=\rho-\RR(t)\), from a fixed distance ahead of the front up to any fixed multiple of \(\sqrt t\). These bounds are suggested by the self-similar asymptotics of \citet[Theorem~2.1]{RRM2019},
	which give, for some positive Lipschitz function \(\mathfrak c\),
	\[
	t^{-1/2}v(t,\sqrt t\,\eta,\omega)
	\longrightarrow
	\mathfrak c(\omega)\eta e^{-\eta^2/4}
	\qquad (\eta>0).
	\]
	We establish the required uniform bounds throughout this range by constructing suitable upper and lower barriers.
	The nondegeneracy \(v\asymp r\), together with the rotation estimates, converts the absolute derivative bounds into relative angular and mixed estimates for \(\log u\).
	The radial logarithmic curvature is obtained separately. A
	subdiffusive blow-up reduces to a caloric rigidity argument, while
	at the diffusive scale, the above self-similar asymptotics identify the blow-up limit,
	and local parabolic regularity upgrades the rescaled convergence to
	spatial \(C^2_{\rm loc}\). Combining the
	radial, tangential and mixed estimates gives the logarithmic-Hessian
	bound throughout this region.
	
	Farther into the exterior region, write
	\(\sigma=t-1\) and \(p=x/(2\sigma)\), so that \(|p|=1\)
	corresponds to the critical speed. We use an exponential-moment
	formulation inspired by Brunet's one-dimensional analysis of the
	solution beyond the propagating front \citep{Brunet2023}.
	For elapsed time \(s\), let \(\mathcal U_p(s,y)\) denote the
	exponentially conjugated solution in the frame moving with velocity
	\(2p\). It satisfies the exact factorization
	\[
	\mathcal U_p(s,y)
	=
	\mathcal K_s(y)\,
	\mathfrak a_s\!\left(p+\frac{y}{2s}\right),
	\]
	where \(\mathcal K_s\) is the heat kernel and
	\(\mathfrak a_s\) is a finite-time amplitude carrying the nonlinear
	history. For each fixed \(p\) with \(|p|>1\), the finite-time amplitude \(\mathfrak a_s(p)\) converges to \(\mathfrak a(p)\) as \(s\to\infty\).

	The main issue in this region is to control this amplitude uniformly both as
	\(|p|\downarrow1\) and as \(|p|\) becomes large. We therefore prove
	estimates with explicit dependence on \(|p|-1\) in the near-critical
	regime, and estimates uniform for unbounded \(|p|\) once the gap from
	the critical sphere is fixed. Relative estimates comparing \(\mathfrak a_s\) with \(\mathfrak a\), together with the stability of \(\mathfrak a\) under the \(O(s^{-1/2})\) changes in \(p\) generated by diffusive displacements, then give a uniform \(C^0\) Gaussian approximation. Local parabolic
	regularity upgrades this approximation to spatial \(C^2\) control and
	yields the exterior logarithmic-Hessian bound. The four overlapping regions then yield the whole-space
	logarithmic-Hessian estimate of
	Theorem~\ref{cont:thm:whole-space}.

	Sections~\ref{sec:reflection} and~\ref{sec:angular} establish
	preliminary derivative estimates and a one-sided second
	angular estimate, respectively.
	Section~\ref{sec:front-geometry-concavity} uses them to obtain the
	compact-level geometry and fixed-front logarithmic Hessian, and then
	propagates Hessian negativity into the invaded interior.
	Section~\ref{cont:sec:through-diffusive} carries the
	logarithmic-Hessian estimate through the diffusive scale. Sections~\ref{cont:sec:near-critical}--\ref{cont:sec:fixed-gap}
	establish the near-critical and fixed-gap amplitude estimates.
	Section~\ref{cont:sec:exterior} converts these estimates into Gaussian
	exterior control and completes the whole-space proof.
	Standard parabolic estimates used throughout are as in
	\citep{LSU1968,Lieberman1996}.

	\begin{figure}[!htbp]
		\centering
		\begin{tikzpicture}[x=1cm,y=1cm,>=stealth,
			every node/.style={inner sep=1.5pt}, line cap=round]
			
			%------------------------------------------------
			% Front profile
			%------------------------------------------------
			\draw[->,line width=0.45pt] (0.55,0)--(14,0);
			\draw[->,line width=0.45pt] (0.55,0)--(0.55,2.18)
			node[above] {$u$};
			\draw (0.48,1.85)--(0.62,1.85);
			\node[left] at (0.48,1.85) {$1$};
			
			\draw[line width=0.95pt,smooth,
			domain=0.55:13.82,samples=160]
			plot (\x,{1.85/(1+exp(1.25*(\x-6.35)))});
			
			\draw[densely dashed,black!50,line width=0.35pt]
			(6.35,0)--(6.35,1.84);
			\draw (6.35,0.07)--(6.35,-0.07);
			\node[below] at (6.35,-0.06) {$0$};
			\node[below right] at (11.75,-0.03)
			{$r=\rho-R_0(t)$};

			\fill[black!8] (5.07,-1.95) rectangle (5.77,-1.05);
			\fill[black!8] (7.27,-1.95) rectangle (7.92,-1.05);
			\fill[black!8] (10.70,-1.95) rectangle (11.33,-1.05);
			
			% Invaded interior
			\draw[<-,line width=0.65pt]
			(0.66,-1.05)--(5.77,-1.05);
			\draw[line width=0.55pt]
			(5.77,-1.14)--(5.77,-0.96);
			\node[above,align=center,font=\scriptsize]
			at (2.65,-0.97)
			{Invaded interior\\[-1pt]$r\le -L_0$};
			
			% Fixed front
			\draw[line width=0.65pt]
			(5.07,-1.95)--(7.92,-1.95);
			\foreach \x in {5.07,7.92}
			\draw[line width=0.55pt]
			(\x,-2.04)--(\x,-1.86);
			\node[above,align=center,font=\scriptsize]
			at (6.49,-1.86)
			{Fixed front\\[-1pt]$|r|\le L_{\mathrm{fr}}$};
			
			% Diffusive leading edge
			\draw[line width=0.65pt]
			(7.27,-1.05)--(11.33,-1.05);
			\foreach \x in {7.27,11.33}
			\draw[line width=0.55pt]
			(\x,-1.14)--(\x,-0.96);
			\node[above,align=center,font=\scriptsize]
			at (9.32,-0.97)
			{Diffusive leading edge\\[-1pt]
				$L_K\le r\le K\sqrt t$};
			
			% Gaussian exterior
			\draw[->,line width=0.65pt]
			(10.70,-1.95)--(13.85,-1.95);
			\draw[line width=0.55pt]
			(10.70,-2.04)--(10.70,-1.86);
			\node[above,align=center,font=\scriptsize]
			at (12.28,-1.86)
			{Gaussian exterior\\[-1pt]
				$r\ge r_{\mathrm{ext}}(t)$};
			
		\end{tikzpicture}
		
		\caption{Schematic of the four overlapping spatial regions
			in the proof of Theorem~\ref{cont:thm:whole-space}.
			This one-dimensional sketch represents the solution profile along any fixed direction \(\omega\in\Sph\); \mbox{no radial symmetry is assumed.}
			Here $r_{\mathrm{ext}}(t)$ denotes the Gaussian-exterior cutoff,
			with $r_{\mathrm{ext}}(t)\gtrsim\sqrt t$
			for sufficiently large $t$.
			The other cutoff parameters are chosen in the proof.}
		\label{fig:four-regimes}
	\end{figure}
	\FloatBarrier

	\Needspace{12\baselineskip}
	\section{Reflection comparison and the front region}\label{sec:reflection}

	In this section, we collect several preliminary estimates for later use. We first show that \(0<u(t,x)<1\) for all sufficiently large \(t\) and all \(x\in\mathbb R^N\), and record uniform bounds on its spatial derivatives.

	Set \(K_0=\max\{1,\|u_0\|_\infty\}\). Since \(u_0\le K_0\) and
	\((\partial_t-\Delta)K_0-K_0+K_0^2=K_0(K_0-1)\ge0,\)
	comparison and the strong maximum principle give \(0<u(t,x)\le K_0\) for \(t>1\) and \(x\in\mathbb R^N\).
	For \(w=u-1\), the inequality \(w_t-\Delta w+w=-w^2\le0\)
	yields, with \(s=t-1\),
	\[
	u(t,x)-1\le e^{-s}\bigl[(G_s*u_0)(x)-1\bigr],
	\]
	where \(G_s\) is the heat kernel. Since
	\(\|G_s*u_0\|_\infty
	\le(4\pi s)^{-N/2}\|u_0\|_1\longrightarrow0,\)
	there is \(T_*>1\) such that \(0<u(t,x)<1\) for all
	\(t\ge T_*\) and \(x\in\R^N\).
	Standard interior estimates, applied after a fixed positive time,
	give
	\begin{equation}\label{eq:globalderivs}
		\sup_{t\ge2,\ x\in\R^N}
		\bigl(|\nabla_xu(t,x)|+|D_x^2u(t,x)|\bigr)<\infty.
	\end{equation}
	The same statement holds for any fixed finite number of spatial
	derivatives, with the corresponding constant.

	We next use a reflection argument to establish exterior monotonicity and an angular gradient estimate, following \citep{Jones1983,Rossi2017}.

	\begin{lemma}\label{lem:reflection}
		Let \(t>1\), \(x\in\R^N\), and \(|x|>R_*\). Then for every unit vector \(e\)
		such that \(x\cdot e>R_*\),
		\begin{equation}\label{eq:dirnegative}
			\nabla_xu(t,x)\cdot e<0.
		\end{equation}
		In particular \(u_\rho(t,\rho,\omega)<0\) when \(\rho>R_*\). Moreover,
		\begin{equation}\label{eq:angularcone}
			|\nS u(t,\rho,\omega)|
			\le \frac{R_*}{\sqrt{1-R_*^2/\rho^2}}\,
			\bigl(-u_\rho(t,\rho,\omega)\bigr),
			\qquad \rho>R_*.
		\end{equation}
	\end{lemma}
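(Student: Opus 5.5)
The plan is the moving-plane reflection argument of Jones and Rossi. Fix a unit vector \(e\) and \(\mu>R_*\). Consider the half-space \(H_\mu=\{y: y\cdot e>\mu\}\) and the reflection \(y^\mu=y-2(y\cdot e-\mu)e\) across \(\partial H_\mu\). Set
\[
w(t,y)=u(t,y^\mu)-u(t,y)
\]
on \((1,\infty)\times H_\mu\). Since the equation is invariant under reflections, \(w\) satisfies a linear equation
\[
w_t=\Delta w+c(t,y)w,\qquad c=1-u(t,y)-u(t,y^\mu),
\]
and \(c\) is bounded because \(0<u\le K_0\). The data of this problem are as follows.
\begin{itemize}
\item On \(\partial H_\mu\) we have \(w=0\).
\item At \(t=1\), \(w(1,y)=u_0(y^\mu)\ge0\), since \(u_0(y)=0\) for \(y\in H_\mu\): indeed \(y\cdot e>\mu>R_*\) places \(y\) outside \(B_{R_*}\).
\item Moreover \(w(1,\cdot)\not\equiv0\) on \(H_\mu\), because its support is the reflection of \(\supp u_0\), which lies in the complementary half-space.
\end{itemize}
The maximum principle on the unbounded domain \(H_\mu\) applies since \(w\) is bounded, and the strong maximum principle then gives \(w>0\) in \((1,\infty)\times H_\mu\). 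The Hopf lemma at the boundary, where \(w=0\), gives \(\partial_e w<0\) on \(\partial H_\mu\). There \(\partial_e w=-2\nabla u\cdot e\), so \(\nabla u(t,y)\cdot e>0\)? The sign needs care, so I redo it from the inward direction. The inward normal into \(H_\mu\) is \(+e\), and Hopf gives \(\partial_{e}w>0\). Differentiating \(w(y)=u(y^\mu)-u(y)\) along \(e\) at a boundary point yields \(-\nabla u\cdot e-\nabla u\cdot e=-2\nabla u\cdot e\). Hence \(\nabla u\cdot e<0\) at every point with \(y\cdot e=\mu\). Taking \(\mu=x\cdot e>R_*\) gives \eqref{eq:dirnegative}. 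Choosing \(e=\omega\) gives \(u_\rho<0\) for \(\rho>R_*\).

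For \eqref{eq:angularcone}, fix \(x=\rho\omega\) with \(\rho>R_*\). Consider unit vectors \(e=\cos\theta\,\omega+\sin\theta\,\tau\), where \(\tau\perp\omega\) is a unit tangent vector; the condition \(x\cdot e=\rho\cos\theta>R_*\) holds whenever \(\cos\theta>R_*/\rho\). By \eqref{eq:dirnegative},
\[
\cos\theta\,u_\rho+\sin\theta\,\rho^{-1}\partial_\tau u<0 .
\]
Here \(\rho^{-1}\partial_\tau u\) is the Euclidean tangential derivative, and \(\partial_\tau u\) denotes the angular derivative with respect to the unit-sphere metric. Choose \(\tau\) in the direction of \(\nS u\) (the case \(\nS u=0\) is trivial). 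This gives
\[
\sin\theta\,|\nS u|/\rho<\cos\theta\,(-u_\rho)
\quad\text{for all }\theta\text{ with }\cos\theta>R_*/\rho .
\]
Letting \(\cos\theta\downarrow R_*/\rho\), so that \(\tan\theta\to\sqrt{\rho^2-R_*^2}/R_*\), we obtain
\[
|\nS u|\le\rho\,\frac{R_*}{\sqrt{\rho^2-R_*^2}}\,(-u_\rho)=\frac{R_*}{\sqrt{1-R_*^2/\rho^2}}(-u_\rho).
\]
This is \eqref{eq:angularcone}, once one checks the convention that \(\nS\) is the gradient on the unit sphere, so that the Euclidean tangential gradient is \(\rho^{-1}\nS u\).

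The only delicate points are justifying the maximum principle on the unbounded half-space and tracking the conversion factor \(\rho\) between the spherical and Euclidean tangential gradients. For the first, bounded \(w\) and bounded \(c\) suffice: pass to \(e^{-Ct}w\) and use a standard Phragmén–Lindelöf barrier such as \(\varepsilon(|y|^2+Dt)e^{Ct}\). Hopf applies because \(w>0\) in the interior and the boundary is flat.
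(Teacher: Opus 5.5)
Your proposal is correct and is essentially the paper's proof. It reflects \(u\) across the hyperplanes \(\{y\cdot e=\mu\}\) with \(\mu>R_*\), uses bounded comparison, the strong maximum principle and the boundary point lemma to get \(\partial_e w=-2\nabla u\cdot e>0\), then tilts \(e\) within the admissible cone \(\cos\theta>R_*/\rho\) and lets \(\cos\theta\downarrow R_*/\rho\). The differences are cosmetic: you align one tilted direction with \(\nS u\) instead of using the pair \(e_\pm\) with a supremum over \(v\), and your mid-argument sign hesitation resolves correctly, so a final write-up should state the inward-normal form of the boundary point lemma directly.
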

	
	\begin{proof}
		Fix \(a>R_*\) and a unit vector \(e\). Let
		\(H_{a,e}=\{y:y\cdot e>a\}\) and \(y^*=y-2(y\cdot e-a)e\).
		In this half-space set \(w(t,y)=u(t,y^*)-u(t,y)\).
		It satisfies
		\[
		w_t=\Delta w+\bigl(1-u(t,y^*)-u(t,y)\bigr)w,\qquad
		w=0\quad\hbox{on }\partial H_{a,e}.
		\]
		The initial value is nonnegative, since \(u_0=0\)
		in \(H_{a,e}\), and is not identically zero because
		the reflection of the positive part of \(u_0\)
		lies inside this half-space.
		Bounded comparison, the strong maximum principle,
		and the boundary point lemma yield \(w>0\)
		in \(H_{a,e}\) and \(\partial_e w>0\) on its boundary.
		There \(\partial_e w=-2\partial_e u\), proving
		\eqref{eq:dirnegative} at every point of the plane.
		Given \(x,e\) as in the statement, take \(a=x\cdot e\).
		
		For \(\rho=|x|>R_*\), put \(d=R_*/\rho\) and choose any unit vector
		\(v\perp\omega\). For \(d<\alpha<1\), the two vectors
		\(e_\pm=\alpha\omega\pm\sqrt{1-\alpha^2}\,v\)
		satisfy the condition in \eqref{eq:dirnegative}. Therefore
		\[
		|\nabla_xu\cdot v|
		\le\frac{\alpha}{\sqrt{1-\alpha^2}}(-u_\rho).
		\]
		Let \(\alpha\downarrow d\), take the supremum over \(v\), and use
		\(|\nS u|=\rho|(\id-\omega\otimes\omega)\nabla_xu|\).
	\end{proof}
	
	In particular, \eqref{eq:dirnegative} and the compact
	support of $u_0$ imply that, for every $t\ge1$,
	$u(t,\cdot)$ attains its maximum and all its
	maximizers lie in $\overline B_{R_*}(0)$.

	The reflection estimate controls first angular derivatives. To estimate their radial derivatives, we introduce the infinitesimal generators of rotations.
	In the rotation estimates, \(A\) denotes a real skew-symmetric matrix,
	and all suprema over \(A\) below are understood to be taken over such
	matrices.
	Define
	\begin{equation}\label{eq:rotation}
		X_A f(x)=(Ax)\cdot\nabla_x f(x).
	\end{equation}
	Since \(e^{sA}\in SO(N)\), the vector field \(x\mapsto Ax\) is the
	infinitesimal generator of a one-parameter group of Euclidean rotations,
	and hence a Euclidean Killing field. Its restriction to \(\mathbb S^{N-1}\)
	is a Killing field for the standard round metric.
	On the sphere, \(X_A\) differentiates \(f\) along the rotation orbits
	\(s\mapsto e^{sA}\omega\).
	In polar coordinates,
	\[
	X_A=(A\omega)\cdot\nabla_S,
	\]
	so \(X_A\) differentiates only in the angular variables. 
	Here \(\DS\) denotes the Laplace--Beltrami operator on
	\((\Sph,\gS)\).
	Moreover,
	\begin{equation}\label{eq:commute}
		[X_A,\Delta_x]=[X_A,\partial_\rho]=[X_A,\Delta_S]=0.
	\end{equation}
	
	Here and below, \(\|A\|_{\mathrm{op}}
	:=\sup_{|v|=1}|Av|\) denotes the Euclidean operator norm.

	\begin{lemma}\label{lem:firstmixed}
		There is a constant \(C\) such that, for \(t\ge3\),
		\begin{equation}\label{eq:rotationfirst}
			\sup_{\|A\|_{\mathrm{op}}\le1}
			\bigl(|X_Au(t,x)|+|\nabla_xX_Au(t,x)|\bigr)\le C
			\quad\text{for all }x\in\R^N.
		\end{equation}
		Consequently, for all \(\rho>0\),
		\begin{equation}\label{eq:firstmixed}
			|\nS u|+|\partial_\rho\nS u|+|u_{\rho\rho}|\le C.
		\end{equation}
	\end{lemma}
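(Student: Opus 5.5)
The plan is to treat \(X_Au\) as a solution of the linearized equation and bound it by a uniform parabolic argument, then read off the polar quantities.

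\emph{Step 1: the equation for \(X_Au\).} By \eqref{eq:commute}, \(X_A\) commutes with \(\Delta_x\) and \(\partial_t\). Hence \(\phi=X_Au\) satisfies
\[
\phi_t=\Delta\phi+(1-2u)\phi.
\]
The zeroth-order coefficient is bounded by \(1+2K_0\).

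\emph{Step 2: the pointwise bound on \(\phi\).} I would not use a global Gronwall bound, since it grows like \(e^{Ct}\). Instead I split space.
\begin{itemize}[leftmargin=1.5em]
\item For \(|x|\le 2R_*\) we have \(|\phi|\le |Ax|\,|\nabla u|\le 2R_*\sup|\nabla u|\). This is bounded by \eqref{eq:globalderivs}.
\item For \(|x|=\rho>2R_*\), write \(\phi=(A\omega)\cdot\nS u\) with \(|A\omega|\le1\). Then \eqref{eq:angularcone} gives \(|\phi|\le C R_*(-u_\rho)\le C R_*|\nabla u|\), again bounded by \eqref{eq:globalderivs}.
\end{itemize}
So \(|X_Au|\le C\) uniformly for \(t\ge2\), all \(x\), and \(\|A\|_{\mathrm{op}}\le1\). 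This already yields \(|\nS u|\le C\), by taking \(A\) with \(A\omega=v\) for unit \(v\perp\omega\). Such an \(A=v\otimes\omega-\omega\otimes v\) has norm \(1\).

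\emph{Step 3: the gradient bound on \(\phi\).} I would apply interior parabolic estimates to the linear equation for \(\phi\) on unit cylinders \((t-1,t]\times B_1(x)\), with \(t\ge3\). Since the coefficient \(1-2u\) is bounded in \(C^1\) by \eqref{eq:globalderivs}, the local \(L^\infty\) bound from Step~2 gives \(|\nabla\phi(t,x)|\le C\sup_{Q}|\phi|\le C\). Note that \(\phi\) itself grows like \(|x|\,|\nabla u|\) a priori, but Step~2 removes any dependence on \(|x|\). The constants are uniform in \(A\) because the estimate depends only on the coefficients and the sup of \(\phi\).

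\emph{Step 4: consequences in polar coordinates.} Since \([X_A,\partial_\rho]=0\), we have \(\partial_\rho X_Au=X_A u_\rho=\omega\cdot\nabla_x(X_Au)\), which is bounded by Step~3. With \(A=v\otimes\omega_0-\omega_0\otimes v\), this gives \(|\partial_\rho\nS u|\le C\) at direction \(\omega_0\). Finally, \(|u_{\rho\rho}|=|\omega^{T}D_x^2u\,\omega|\le C\) directly from \eqref{eq:globalderivs}.

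The main obstacle is Step~2: avoiding exponential growth and linear-in-\(|x|\) growth of \(\phi\). The reflection cone estimate \eqref{eq:angularcone} is exactly what resolves it.
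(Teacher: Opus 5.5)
Your proposal is correct and follows essentially the same route as the paper. The paper also splits at $|x|=2R_*$: it uses the reflection cone estimate together with the global derivative bounds outside that ball, and the trivial bound $|Ax|\,|\nabla u|$ inside it. It then applies interior gradient estimates to the linearized equation $z_t=\Delta z+(1-2u)z$ satisfied by $X_Au$, with constants uniform in $A$, and chooses $A=v\otimes\omega-\omega\otimes v$ to read off $\partial_\rho\nabla_S u$, while $u_{\rho\rho}$ comes directly from the Hessian bound.
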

	
	\begin{proof}
		When \(\rho\ge2R_*\), \eqref{eq:angularcone} and
		\eqref{eq:globalderivs} imply \(|\nS u|\le C\).
		Thus \(|X_Au|\le C\) there, uniformly for \(\|A\|_{\mathrm{op}}\le1\).
		In the remaining ball, \(|X_Au|\le2R_*|\nabla_xu|\le C\).
		This gives the first bound in \eqref{eq:rotationfirst} for \(t\ge2\).
		
		For fixed \(A\), \(z=X_Au\) solves the linear parabolic equation
		\begin{equation*}
			z_t=\Delta_xz+(1-2u)z.
		\end{equation*}
		Interior gradient estimates on fixed space-time cylinders give
		\(|\nabla_xz|\le C\) for \(t\ge3\). The coefficients and the bound on \(z\)
		are uniform in \(A\), so the constant is uniform as well.
		Equivalently, this follows from Duhamel's formula on a time interval of
		length one and the integrable estimate
		\(\|\nabla G_s\|_{L^1(\R^N)}\le C s^{-1/2}\) for the heat kernel.
		
		At a point \(\omega\in\Sph\), given any unit tangent vector
		\(v\in T_\omega\Sph\), choose \(A=v\otimes\omega-\omega\otimes v.\)
		Then \(A\omega=v\) and \(\|A\|_{\mathrm{op}}=1\). Hence
		\(X_Au=v\cdot\nabla_Su\) and \(\partial_\rho X_Au
		=v\cdot\partial_\rho\nabla_Su.\)
		The bound for
		\(\partial_\rho X_Au=\omega\cdot\nabla_xz\) therefore controls all
		components of \(\partial_\rho\nS u\).
		Finally \(u_{\rho\rho}=D_x^2u[\omega,\omega]\) is bounded by
		\eqref{eq:globalderivs}.
	\end{proof}

	Finally, we consider radial derivatives in fixed-front regions. For \(L>0\), a \emph{fixed-front region} will mean
	\[
	|\rho-\RR(t)|\le L.
	\]
	The uniform spatial derivative bounds established above and one-dimensional interpolation upgrade
	the front asymptotics \eqref{eq:rrm} to the following derivative
	convergence, without differentiating the Lipschitz phase \(s^\infty\).
	
	\begin{lemma}\label{lem:radialC1}
		For every \(L>0\) and \(j=1,2\),
		\begin{equation}\label{eq:radialC1}
			\sup_{\substack{|\rho-\RR(t)|\le L\\\omega\in\Sph}}
			\left|\partial_\rho^j u(t,\rho,\omega)
			-\UU^{(j)}\bigl(\rho-\RR(t)+s^\infty(\omega)\bigr)\right|
			\longrightarrow0\qquad(t\to\infty).
		\end{equation}
	\end{lemma}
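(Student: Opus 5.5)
The plan is to combine the uniform convergence \eqref{eq:rrm} with the uniform higher-derivative bounds \eqref{eq:globalderivs}, through a one-dimensional interpolation inequality along radial segments. Fix \(L>0\). For each \(t\) and \(\omega\), set
\[
f(\rho)=u(t,\rho,\omega)-\UU\bigl(\rho-\RR(t)+s^\infty(\omega)\bigr)
\]
on the interval \(I=[\RR(t)-L-1,\RR(t)+L+1]\). Since \(\omega\) is fixed, \(s^\infty(\omega)\) is just a constant, so \(f\) is smooth in \(\rho\) and we never differentiate the Lipschitz phase.

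First, by \eqref{eq:rrm}, \(\varepsilon(t):=\sup_{I,\omega}|f|\to0\). Note that \(\RR(t)\to\infty\), so \(I\subset(0,\infty)\) for large \(t\) and the uniformity in \(\rho>0\) applies.

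Second, \(\partial_\rho^ju=D_x^ju[\omega,\dots,\omega]\). By \eqref{eq:globalderivs}, extended to three derivatives as the paper states, \(|\partial_\rho^ju|\le C\) for \(j\le3\), \(t\ge2\). The wave \(\UU\) has bounded derivatives of all orders: this follows from the ODE \eqref{eq:wave} and \(0<\UU<1\). Hence \(\sup_I|f'''|\le C_3\), uniformly in \(t\) and \(\omega\).

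Third, apply the elementary interpolation (Landau–Kolmogorov type) on an interval of length at least \(1\):
\[
\sup_{I'}|f^{(j)}|\le C\bigl(\varepsilon^{1-j/3}\|f'''\|_\infty^{j/3}+\varepsilon\bigr),\qquad j=1,2,
\]
where \(I'=[\RR(t)-L,\RR(t)+L]\). A concrete way to see this: around each \(\rho\in I'\), take the Taylor expansion of \(f\) to second order with step \(h\in(0,1]\) and solve for \(f'(\rho),f''(\rho)\) from the values \(f(\rho),f(\rho\pm h)\) (or one-sided points near the boundary of \(I\)). This gives \(|f''|\le 4\varepsilon/h^2+C_3h\) and \(|f'|\le 2\varepsilon/h+C_3h^2\). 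Choosing \(h=\varepsilon^{1/3}\) yields \(|f'|+|f''|\le C\varepsilon^{1/3}\to0\), uniformly in \(\omega\), proving \eqref{eq:radialC1}.

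The only mildly delicate point is making the third-derivative bound uniform, but this follows directly from \eqref{eq:globalderivs} and its higher-order version. Everything else is routine, and I expect no real obstacle.
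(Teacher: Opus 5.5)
Your proposal is correct and follows essentially the same route as the paper: subtract the shifted wave on a radial interval of half-length \(L+1\), use the uniform convergence \eqref{eq:rrm} and the uniform third-derivative bound, and apply centered Taylor differences with \(h=\varepsilon^{1/3}\). The only small additions are that \(h\le1\) requires \(\varepsilon\le1\) (true for large \(t\)) and that the case \(\varepsilon=0\) is immediate, as the paper notes; also, no one-sided differences are needed, since \(I'\) lies at distance \(1\) inside \(I\).
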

	
	\begin{proof}
		Fix \(L>0\), and for sufficiently large \(t\) set
		\[
		F_{t,\omega}(r)
		=u\bigl(t,(\RR(t)+r)\omega\bigr)
		-\UU(r+s^\infty(\omega)),\qquad |r|\le L+1.
		\]
		Then \(\eta(t):=\sup_{\omega,\,|r|\le L+1}
		|F_{t,\omega}(r)|\to0\) by \eqref{eq:rrm}. The uniform third
		spatial derivative bound and smoothness of \(\UU\) give
		\(|\partial_r^3F_{t,\omega}|\le C_L\) on this interval.
		For \(|r|\le L\) and \(0<h\le1\), centered Taylor differences
		yield
		\begin{equation*}
			\begin{aligned}
				|\partial_rF_{t,\omega}(r)|
				&\le\eta(t)/h+C_Lh^2,\\
				|\partial_{rr}F_{t,\omega}(r)|
				&\le4\eta(t)/h^2+C_Lh.
			\end{aligned}
		\end{equation*}
		Choose \(h=\eta(t)^{1/3}\) when \(0<\eta(t)\le1\).
		If \(\eta(t)=0\), both derivatives vanish on the smaller
		interval. This proves the two convergences, uniformly in \(\omega\).
	\end{proof}
	
	\section{A one-sided second angular estimate}\label{sec:angular}

	In this section, we establish a one-sided second angular derivative estimate needed for the curvature bounds in Theorem~\ref{thm:main} and the logarithmic-Hessian estimates from the fixed-front region through the diffusive scale.

	\begin{proposition}\label{prop:coreHess}
		{
			\textup{(i)} There are \(T>1\) and \(C>0\) such that, for \(t\ge T\),
			\(r\ge-3\log t\), \(\omega\in\Sph\), and every real
			skew-symmetric matrix \(A\) with \(\|A\|_{\mathrm{op}}\le1\),
			\begin{subequations}
				\begin{equation}\label{eq:globalRotation}
					X_A^2u\bigl(t,(\RR(t)+r)\omega\bigr)
					\le Ce^{-r}(1+r+3\log t).
				\end{equation}
				
				\textup{(ii)} In particular, for every \(L>0\), there are \(T_L>1\) and \(C_L>0\) such that
				\begin{equation}\label{eq:coreHess}
					\HS u(t,\rho,\omega)\le C_L\log t\,\gS
					\quad\text{if }t\ge T_L,\quad |\rho-\RR(t)|\le L.
				\end{equation}
			\end{subequations}
			The inequality in \eqref{eq:coreHess} is understood in the sense of
			quadratic forms, with spherical derivatives taken at fixed \(\rho\).
		}
	\end{proposition}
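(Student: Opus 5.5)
Let \(w=X_A^2u\). By \eqref{eq:commute}, \(z=X_Au\) solves \(z_t=\Delta z+(1-2u)z\). Applying \(X_A\) once more gives
\[
w_t=\Delta w+(1-2u)w-2z^2\le\Delta w+(1-2u)w .
\]
So \(w\) is a subsolution of the same linearized operator. The quadratic term has exactly the favorable sign for an upper bound, though not for \(|w|\); this is the one-sided mechanism alluded to after \citep[Remark~1]{RRM2019}. The plan is therefore to build one explicit supersolution \(\bar w\) of \(\bar w_t\ge\Delta\bar w+(1-2u)\bar w\) on the moving domain \(\{\rho\ge\RR(t)-3\log t\}\), \(t\ge T\). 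It should have the shape \(\bar w\approx Ce^{-r}(1+r+3\log t)\) and be independent of \(A\). We would then conclude \(w\le\bar w\) by comparison on that domain, uniformly over \(\|A\|_{\mathrm{op}}\le1\).

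\textbf{Step 1 (supersolution).} Since \(u>0\), we have \(1-2u\le1\), so it suffices that \(\bar w\ge0\) satisfies \(\bar w_t\ge\Delta\bar w+\bar w\). We would use the radial ansatz \(\bar w=e^{-r}\phi(t,r)\) with \(r=\rho-\RR(t)\) and \(\dot\RR=2-k/t\). The conjugated operator is then
\[
\phi_t-\phi_{rr}-\bigl(\tfrac{N-1}{\rho}-\tfrac{k}{t}\bigr)(\phi-\phi_r)\ge0 ,
\]
and a function linear in \(r\), say \(\phi=1+r+3\log t\), nearly works. The error terms come from the drift \((N-1)/\rho-k/t=O(1/t)\) in the region \(\rho\sim2t\). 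They are compensated by the \(3\log t\) growth, since \(\phi_t=3/t\), together with a constant \(C\) chosen large. For \(\rho\) not comparable to \(t\) (very far ahead, or when \(r\) is large), the drift \((N-1)/\rho\) is smaller, and we would check the sign directly. If this fails, we would instead use the scalar supersolution of \citep[Proposition~1]{RRM2019} multiplied by a constant, which has exactly this \(e^{-r}(r+\log t)\) structure.

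\textbf{Step 2 (boundary and initial data).} On the inner boundary \(r=-3\log t\), we have \(\phi=1\) and \(e^{-r}=t^3\), so \(\bar w=Ct^3\). Meanwhile \(|w|\le C_0\rho^2\|D^2u\|+C\rho|\nabla u|\le C't^2\) by \eqref{eq:globalderivs}, since \(|X_A^2u|\lesssim|x|^2|D^2u|+|x||\nabla u|\). Hence \(w\le\bar w\) there for large \(t\). At the initial time \(T\), we have \(\bar w\ge Ce^{-r}\), so we need \(w(T,\cdot)\le Ce^{-r}\) for large \(\rho\). This would follow from Gaussian spatial decay of \(u(T,\cdot)\) and its derivatives; the decay is superexponential, since \(u_0\) has compact support. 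Growth at infinity is handled by the usual Phragmén--Lindelöf comparison for bounded-coefficient equations. This gives (i).

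\textbf{Step 3 (part (ii)).} In the fixed-front region, (i) gives \(X_A^2u\le C_L\log t\). For a unit \(v\in T_\omega\Sph\), take \(A=v\otimes\omega-\omega\otimes v\). The orbit \(s\mapsto e^{sA}\omega\) is then a unit-speed great circle, hence a geodesic. Thus \(X_A^2u(\rho\omega)=\HS u[v,v]\) at fixed \(\rho\), with no first-order correction, and (ii) follows.

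\textbf{Main obstacle.} The hard part is Step 1: closing the supersolution inequality uniformly in \(r\ge-3\log t\), including the region far ahead of the front and near the origin-side drift. We expect the bookkeeping of the \(\log t\) terms against the \(O(1/t)\) drift to be where the constants must be tuned.
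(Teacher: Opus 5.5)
Your overall architecture is the paper's. Because $-2(X_Au)^2\le0$, $w=X_A^2u$ is a subsolution of the linearization. It suffices to compare $w$ with one $A$-independent supersolution of $\partial_t-\Delta-1$ on $\{r\ge-3\log t\}$; the paper does the same, since $\PP\Phi\ge0$ and $BW\Phi\ge0$. Your rear-boundary check and your great-circle derivation of (ii) are also fine.

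The gap is Step~1, and it is not a matter of tuning constants: you have the sign of the drift term reversed. By \eqref{eq:v}, $\bar w=e^{-r}\phi(t,r)$ satisfies $\bar w_t\ge\Delta\bar w+\bar w$ if and only if
\[
\phi_t-\phi_{rr}-b\,(\phi_r-\phi)\ge0,\qquad b=\frac{N-1}{\rho}-\frac kt\le-\frac{3}{2t}+O\Bigl(\frac{\log t}{t^2}\Bigr)\quad(r\ge-3\log t),
\]
because $k=(N+2)/2$. So the term $b(v_r-v)$ in \eqref{eq:v} contains a net growth $-bv\approx\tfrac{3}{2t}v$. This growth is strongest far ahead, where $(N-1)/\rho$ is small and $b\to-k/t$, not weakest as you suggest. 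For $\phi=1+r+3\log t$ the residual is
\[
\frac3t+b\,(r+3\log t)\le\frac{3}{2t}\bigl(2-r-3\log t\bigr)+O\Bigl(\frac{\log t\,(r+3\log t)}{t^2}\Bigr).
\]
This is negative once $r+3\log t\ge3$ and $t$ is large, in particular on the whole front region. Multiplying by $C$ cannot change the sign. A factor $t^\gamma$ repairs it only for $\gamma\ge3/2$, which ruins the bound.

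The failure is structural. The value of $k$ is exactly the one for which the self-similar half-line operator $\MA$ in \eqref{eq:M} has Dirichlet principal eigenvalue $0$, with eigenfunction $\phi_0=\xi e^{-\xi^2/8}$. After the conjugation \eqref{eq:transform}, your profile becomes approximately $\xi e^{\xi^2/8}$, and $\MA(\xi e^{\xi^2/8})=-\tfrac32\xi e^{\xi^2/8}$, which means growth like $t^{3/2}$. With no spectral slack, the barrier must carry the diffusive Gaussian decay. The $O(e^{-\tau/2})$ coefficients $l_1,l_2$ produced by the logarithmic shift must then be absorbed by three pieces:
\begin{enumerate}
\item a bounded, slowly increasing amplitude $\zeta(\tau)$ on $\phi_0$;
\item a boundary layer $m\varphi\gamma_1$ near $\xi=0$, where $l_1\phi_0'$ does not vanish;
\item a far-field term $mG\gamma_2$.
\end{enumerate}
The amplitudes $(m,\zeta)$ solve \eqref{eq:ODE}. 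This is Lemma~\ref{lem:barrier} with its appendix verification, and it is the main work of the proof.

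The $\log t$ in \eqref{eq:globalRotation} is not the shape of the barrier. It appears only at the end, from $\Phi\le C(e^{-\tau/2}+\xi)$ and $\sqrt t\,\xi=r+3\log t$, after discarding $e^{-\xi^2/8}$.

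Your fallback to \citep[Proposition~1]{RRM2019} points the right way but is not a drop-in. That barrier is built for the rear boundary $r=-t^\delta$, and converting it back the same way gives $t^\delta$ rather than $\log t$ at the front. The construction has to be redone for the shift $s(\tau)=-3\tau e^{-\tau/2}$, using \eqref{eq:lbounds}.

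Finally, once the barrier is Gaussian, your initial-time comparison must change as well. Having $w(T)\le Ce^{-r}$ is no longer enough; you must compare Gaussian rates in $\xi$. The paper compares the tail $e^{-3\xi^2/32}$ from Lemma~\ref{lem:tails} with the lower bound $\Phi\ge ce^{-\tau/2}e^{-\xi^2/16}$.
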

	
	{
		Assertion (i) provides the one-sided rotation estimate used for
		logarithmic-Hessian control through the diffusive scale in Section~5,
		while assertion (ii) is its fixed-front consequence used for level-set
		curvature and logarithmic-Hessian control in Section~4.
	}
	
	Roquejoffre, Rossi and Roussier-Michon
	\citep[Proposition~6 and Remark~1]{RRM2019}
	controlled first angular derivatives and noted that Kato's
	inequality does not give absolute second derivative bounds. They
	also pointed out the possibility of a one-sided upper estimate,
	without identifying how to use it. We establish such an estimate by exploiting the favorable sign in the equation for the second rotation derivative and adapting their scalar supersolution construction to a logarithmic rear boundary. The auxiliary calculations needed for
	the adaptation and for the comparison at spatial infinity are
	recorded in \hyperref[app:sec3]{the appendix}.

	\subsection{Transformed equation and logarithmic rear boundary}
	\label{Transformed_equation}
	
	We use the moving-frame and self-similar transformations of
	\citet[Section~2, equations~(8)--(17)]{RRM2019}, but use a different rear boundary.
	We first work in polar coordinates and pass to the moving frame by setting
	\[
	r=\rho-\RR(t),
	\qquad u(t,\rho,\omega)=e^{-r}v(t,r,\omega).
	\]
	Then \(v\) satisfies
	\begin{equation}\label{eq:v}
		v_t=v_{rr}
		+\left(
		\frac{N-1}{r+R_0(t)}-\frac{k}{t}
		\right)(v_r-v)
		+\frac{\DS v}{(r+R_0(t))^2}
		-e^{-r}v^2.
	\end{equation}
	
	To study the diffusive scale \(r=O(\sqrt t)\), we introduce the self-similar variables \(\eta=r/\sqrt t\) and \(\tau=\log t\).
	We then shift the self-similar coordinate:
	\begin{equation}\label{eq:logshift}
		s(\tau)=-3\tau e^{-\tau/2},
		\qquad
		\xi=\eta-s(\tau)
		=\frac r{\sqrt t}-s(\tau)
		=\frac{r+3\log t}{\sqrt t}.
	\end{equation}
	Thus \(\xi\ge0\) is the transformed half-line, and its boundary corresponds to
	the rear boundary \(r=-3\log t\). 
	Set
	\begin{align}
		D(\tau,\xi)
		&=\rho
		=2e^\tau+\xi e^{\tau/2}-(k+3)\tau,
		\label{eq:D}
		\\
		h(\tau,\xi)
		&=
		\frac{(N-1)e^\tau}{D(\tau,\xi)}-k.
		\label{eq:h}
	\end{align}
	Choose \(t_*\ge4\) so large that
	\begin{equation}\label{eq:start}
		2t-(k+3)\log t\ge t
		\qquad (t\ge t_*).
	\end{equation}
	Set \(\tau_*=\log t_*\).
	This gives \(D(\tau,\xi)=\rho\ge e^\tau\) for
	\(\tau\ge\tau_*\), \(\xi\ge0\), thus \(h\) is uniformly bounded.
	
	Finally, we apply a Gaussian conjugation and define \(W\) by the transformation
	\begin{equation}\label{eq:transform}
		u(t,\rho,\omega)
		=
		\sqrt t\,e^{-r}e^{-\xi^2/8}
		W(\tau,\xi,\omega).
	\end{equation}
	This conjugation transforms the leading self-similar operator into a self-adjoint one, as shown below.
	The calculations in Appendix~\ref{app:transform} yield the following transformed equation and coefficient estimates:
	\begin{equation}\label{eq:W}
		W_\tau+\MA W
		=
		l_1W_\xi+l_2W+\alpha\DS W-BW^2,
	\end{equation}
	where
	\begin{align}
		\MA
		&=
		-\partial_{\xi\xi}
		+\frac{\xi^2}{16}
		-\frac34,
		\label{eq:M}
		\\
		l_1
		&=
		(h-3)e^{-\tau/2},
		&
		l_2
		&=
		-\frac32-h-\frac{\xi}{4}l_1,
		\label{eq:l}
		\\
		\alpha
		&=
		\frac{e^\tau}{D^2}>0,
		&
		B
		&=
		\exp\left(
		\frac{3\tau}{2}
		-\frac{\xi^2}{8}
		-(\xi+s)e^{\tau/2}
		\right)>0.
		\label{eq:alphaB}
	\end{align}
	The coefficients are independent of \(\omega\), and we have the estimates
	\begin{equation}\label{eq:lbounds}
		|l_1|\le Ce^{-\tau/2},
		\qquad
		|l_2|
		\le
		C\bigl(\tau e^{-\tau}
		+\xi e^{-\tau/2}\bigr).
	\end{equation}
	
	\begin{remark}
		Unlike \citet{RRM2019}, who use a rear boundary
		$r=-t^\delta$ with $0<\delta<1/4$, we use the
		logarithmic boundary $r=-3\log t$ in
		\eqref{eq:logshift}, motivated by the effective
		Dirichlet-boundary mechanism discussed in
		\citet[Section~3.2]{NRR2017}.
		Indeed, at this boundary, the transformation
		\eqref{eq:transform} gives
		\(W=t^{-7/2}u\) and \(X_A^2W=t^{-7/2}X_A^2u\).
		Thus, $u\le K_0$ and Lemma~\ref{lem:firstmixed}
		yield the required boundary smallness: $W\le K_0t^{-7/2}$ and
		$|X_A^2W|\le Ct^{-5/2}$, respectively.
	\end{remark}

	\subsection{A scalar supersolution}
	
	We adapt the scalar supersolution construction of \citet[Proposition~1, formula~(24), and Section~3.2]{RRM2019} to the coefficients \(l_1,l_2\) in \eqref{eq:l}, for use in the one-sided comparison of second rotation derivatives. Set
	\begin{equation}\label{eq:P}
		\PP=\partial_\tau+\MA-l_1\partial_\xi-l_2-\alpha\DS.
	\end{equation}
	For scalar functions independent of \(\omega\), the spherical diffusion term
	\(\alpha\Delta_S\) vanishes.
	
	\begin{lemma}\label{lem:barrier}  
		There is a smooth positive function \(\Phi(\tau,\xi)\), defined for  
		\(\tau\ge\tau_*\), \(\xi\ge0\), such that  
		\begin{equation}\label{eq:barrierP}  
			\PP\Phi\ge0,\qquad  
			0<\Phi(\tau,\xi)\le C e^{-\xi^2/16}.  
		\end{equation}  
		It can be chosen to satisfy  
		\begin{align}  
			\Phi(\tau,0)&\ge c {e^{-\tau/2}}, \label{eq:barrierleft}\\  
			\Phi(\tau,\xi)&\ge c {e^{-\tau/2}}e^{-\xi^2/16}  
			&&(\xi\ge2), \label{eq:barrierfar}\\  
			\Phi(\tau,\xi)&\le C({e^{-\tau/2}}+\xi)  
			&&(0\le\xi\le a_0/2), \label{eq:barriernear}  
		\end{align}  
		where \(a_0>0\) is fixed and small.  
	\end{lemma}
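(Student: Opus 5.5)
We will build \(\Phi\) from the ground state of the harmonic oscillator \(\MA\) on the half-line, in the spirit of \citet[Proposition~1, formula~(24)]{RRM2019}, and then correct it to absorb the perturbation terms \(l_1\partial_\xi+l_2\). On \(\xi\ge0\) with Dirichlet condition at \(\xi=0\), the operator \(\MA=-\partial_{\xi\xi}+\xi^2/16-3/4\) has principal eigenfunction \(\phi_0(\xi)=\xi e^{-\xi^2/16}\). Indeed, with \(\psi=e^{-\xi^2/16}\) one computes \(-\psi''+\frac{\xi^2}{16}\psi=\frac18\psi\), and the odd function \(\xi\psi\) has eigenvalue \(\frac38\). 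Hence \(\MA\phi_0=(\tfrac38-\tfrac34)\phi_0=-\tfrac38\phi_0\). Since the statement demands \(\Phi\) to be positive at \(\xi=0\) with size \(e^{-\tau/2}\), we plan to take
\[
\Phi(\tau,\xi)=e^{\gamma(\tau)}\bigl(\xi+ae^{-\tau/2}\bigr)e^{-\xi^2/16}\,\chi + (\text{far-field correction}),
\]
or more simply \(\Phi=e^{\gamma(\tau)}\bigl(\phi_0(\xi)+ae^{-\tau/2}\psi(\xi)\bigr)\). Here \(\gamma\) is a bounded-above, slowly varying exponent to be chosen, and \(a>0\) is fixed. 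Positivity, the bound \(\Phi\le Ce^{-\xi^2/16}\), and the three lower and upper bounds \eqref{eq:barrierleft}--\eqref{eq:barriernear} are then immediate from the explicit form, provided \(\gamma\) stays bounded above and below.

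The main computation is \(\PP\Phi\). For \(\tau\)-independent parts we have \(\MA\phi_0=-\frac38\phi_0\) and \(\MA\psi=(\frac18-\frac34)\psi=-\frac58\psi\). Both eigenvalues are negative, so we choose a growth exponent: \(\Phi\) must carry \(e^{\lambda\tau}\) with \(\lambda\ge 3/8\) for the \(\phi_0\) part. This would contradict boundedness. So we instead write \(\Phi=e^{-\mu\tau}\Psi\), where \(\Psi\) is a supersolution of \(\MA\) with a spectral gap. The consistent choice with the bounds \(e^{-\tau/2}\) is to regard the lower bounds as \(\tau\)-dependent through \(e^{-\tau/2}\) only in the boundary layer.

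Thus we instead pick the profile \(\Phi=\bigl(\xi+e^{-\tau/2}\bigr)e^{-\xi^2/16}\) times a bounded factor, where the zeroth-order term \(-\frac32-h\) inside \(l_2\) supplies the sign. Since \(h\to N-1-k\cdot\)(bounded)\(=\frac{N-4}{2}+O(\xi e^{-\tau/2})\) in the relevant range, \(-l_2\approx\frac32+h\). Combined with \(-\frac38\), the total zeroth-order coefficient \(\frac32+h-\frac38\) must be checked for positivity. This is where the precise value of \(h\) from \eqref{eq:h} enters: \(h=\frac{(N-1)e^\tau}{D}-k\to\frac{N-1}{2}-\frac{N+2}{2}=-\frac32\). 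So \(-l_2=O(\tau e^{-\tau}+\xi e^{-\tau/2})\), and the \(\phi_0\) part is exactly critical up to the \(-\frac38\) shift inherited from the conjugation. We therefore expect to use the normalization of \eqref{eq:transform} differently: the \(\sqrt t\) prefactor already accounts for the eigenvalue, so the effective remainder is the error terms \eqref{eq:lbounds}.

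The plan is then as follows. We write \(\Phi=e^{\gamma(\tau)}(\phi_0+ae^{-\tau/2}\psi)\) and compute \(\PP\Phi\). The principal terms cancel except for \(\gamma'\Phi\), the positive contribution \(\frac14ae^{-\tau/2}\psi\) from the spectral gap between \(\psi\) and \(\phi_0\), and errors bounded, using \eqref{eq:lbounds}, by \(C(e^{-\tau/2}|\Phi_\xi|+(\tau e^{-\tau}+\xi e^{-\tau/2})\Phi)\). Near \(\xi=0\) the gap term dominates these errors. For \(\xi\) large, the polynomial weight \(\xi e^{-\tau/2}\) is handled by replacing \(e^{-\xi^2/16}\) by \(e^{-\xi^2/16+\epsilon\xi}\) or by \(\gamma'=C\tau e^{-\tau/2}\)-type corrections. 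In the latter case \(\gamma\) stays bounded because \(\int\tau e^{-\tau/2}\,d\tau<\infty\). We expect the main obstacle to be the error \(\xi e^{-\tau/2}\Phi\) for \(\xi\gg1\), which is not integrable in \(\tau\) uniformly in \(\xi\). For this we would first try, as in \citet[Section~3.2]{RRM2019}, adding a far-field term \(e^{-\tau/2}\xi^2e^{-\xi^2/16}\)-type correction whose \(\MA\)-image is positive for large \(\xi\) (since \(\xi^2/16\) dominates). We would then verify that the resulting Gaussian factor only changes \(\Phi\le Ce^{-\xi^2/16}\) by constants after slightly reducing the exponent.
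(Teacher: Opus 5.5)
There is a genuine gap at the foundation: your spectral computations for \(\MA=-\partial_{\xi\xi}+\xi^2/16-3/4\) are wrong.

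For \(G=e^{-\xi^2/16}\) one has \(-G''+\tfrac{\xi^2}{16}G=(\tfrac18+\tfrac{3\xi^2}{64})G\), so \(G\) is not an eigenfunction. In fact \(\MA G=(\tfrac{3\xi^2}{64}-\tfrac58)G\) and \(\MA(\xi G)=\tfrac{3}{64}\xi(\xi^2-8)G\). The Gaussian adapted to the potential \(\xi^2/16\) is \(e^{-\xi^2/8}\), and the principal Dirichlet eigenfunction on the half-line is \(\phi_0=\xi e^{-\xi^2/8}\), with \(\MA\phi_0=0\). The \(-3/4\) produced by the \(\sqrt t\) normalization makes the principal eigenvalue exactly \(0\), so there is no positive margin with which to absorb perturbations.

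Consequently your final ansatz \(\Phi=e^{\gamma(\tau)}(\xi+ae^{-\tau/2})G\) cannot satisfy \(\PP\Phi\ge0\) with bounded \(\gamma\). At \(\xi=1\), using \eqref{eq:lbounds}, one finds \(\PP\Phi=e^{\gamma}G\bigl[\gamma'(1+ae^{-\tau/2})-\tfrac{21}{64}+O(e^{-\tau/2})\bigr]\). So \(\gamma'\) must stay above a positive constant, \(\Phi\) grows exponentially in \(\tau\), and the bound \(\Phi\le Ce^{-\xi^2/16}\) fails. That uniform bound is exactly what Lemma~\ref{lem:secondW} and Proposition~\ref{prop:coreHess} require.

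The positive \emph{gap term} \(\tfrac14ae^{-\tau/2}\psi\) you invoke does not exist either. Even with your claimed eigenvalues \(-\tfrac38\) and \(-\tfrac58\), the \(\psi\)-component carries a negative coefficient. You also note yourself that \(\gamma'\ge\tfrac38\) contradicts boundedness, and then proceed with bounded \(\gamma\) anyway. The bounds \eqref{eq:barrierleft}--\eqref{eq:barriernear} are indeed immediate for an explicit profile; the whole content of the lemma is \(\PP\Phi\ge0\), and that is not established.

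The missing idea is that the critical profile \(\xi e^{-\xi^2/8}\) vanishes at \(\xi=0\) and decays faster than \(e^{-\xi^2/16}\). By itself it therefore cannot give \eqref{eq:barrierleft} or \eqref{eq:barrierfar}, nor absorb the unbounded error \(|l_2|\le C\xi e^{-\tau/2}\). It must be supplemented by pieces with their own positivity mechanisms, glued by cutoffs and driven by coupled amplitudes. The paper takes \(\Phi=m\varphi\gamma_1+mG\gamma_2+\zeta\phi_0\) with \(\varphi=\cos(\pi\xi/(2a_0))\), and checks three regions.
\begin{itemize}
\item Near \(\xi=0\): the large eigenvalue \(\Lambda=(\pi/(2a_0))^2\), together with \(|\varphi'|\le\sqrt\Lambda\,\varphi\), dominates the bounded coefficients, and \(\Phi(\tau,0)=m\).
\item For \(\xi\ge R\): the positive part \(\tfrac{3\xi^2}{64}\) of \(\MA G/G\) dominates the perturbation \(\xi l_1/8-l_2=O((1+\xi)e^{-\tau/2})\). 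The \(\zeta\phi_0\) errors are \(O(\zeta e^{-\tau/2}G)\), and the term \(mG\) also supplies \eqref{eq:barrierfar}.
\item On the compact transition interval: the cutoff errors are \(O(m)\) and are paid for by \(\zeta'\phi_0\), since \(\min\phi_0>0\) there.
\end{itemize}
All three regions reduce to \(m'+m\ge C\zeta e^{-\tau/2}\) and \(\zeta'\ge C(m+\zeta e^{-\tau/2})\), which the coupled system \eqref{eq:ODE} enforces. Integrability of \(e^{-\tau/2}\) keeps \(\zeta\) bounded above and below and gives \(m\asymp e^{-\tau/2}\), whence \eqref{eq:barrierP}--\eqref{eq:barriernear}.

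Your far-field correction idea points toward the \(mG\gamma_2\) piece. But without the correct null profile, the boundary-layer term, and the coupled amplitudes, the construction does not close.
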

	
	\begin{proof}
		Use the profiles and cutoffs of the upper-barrier construction in
		\citep[pp.~7273--7274]{RRM2019}:
		\[
		\phi_0(\xi)=\xi e^{-\xi^2/8},\qquad
		G(\xi)=e^{-\xi^2/16},\qquad
		\varphi(\xi)=\cos\frac{\pi\xi}{2a_0},
		\]
		where \(0<a_0<1/4\) will be chosen small. Take smooth
		\(0\le\gamma_1,\gamma_2\le1\) such that
		\[
		\begin{gathered}
			\gamma_1=1\text{ on }[0,a_0/2],\qquad
			\gamma_1=0\text{ on }[2a_0/3,\infty),\\
			\gamma_2=0\text{ on }[0,1],\qquad
			\gamma_2=1\text{ on }[2,\infty).
		\end{gathered}
		\]
		In particular, \(\varphi\gamma_1\ge0\). Set
		\begin{equation}\label{eq:Phi}
			\Phi=m(\tau)\varphi\gamma_1+m(\tau)G\gamma_2
			+\zeta(\tau)\phi_0,
		\end{equation}
		where \(m\) and \(\zeta\) are the solutions of the linear system:
		\begin{equation}\label{eq:ODE}
			\begin{gathered}
				m'+m=C_0\zeta e^{-\tau/2},\qquad
				\zeta'=C_0(m+\zeta e^{-\tau/2}),\\
				m(\tau_*)>0,\qquad\zeta(\tau_*)>0.
			\end{gathered}
		\end{equation}
		Here \(C_0 > 0\) will be determined later.
		The system preserves positivity, so \(m(\tau),\zeta(\tau)>0\) for all \(\tau\ge\tau_*\).
		The three-region verification, with the bounds
		\eqref{eq:lbounds} for the logarithmic rear boundary, is given
		in Appendix~\ref{app:barriercheck}. It shows that the sufficient
		conditions for \(\PP\Phi\ge0\) are
		\[
		m'+m\ge C\zeta e^{-\tau/2},\qquad
		\zeta'\ge C(m+\zeta e^{-\tau/2}).
		\]
		Both follow from \eqref{eq:ODE} once \(C_0\) is chosen sufficiently large.
		
		For the amplitude bounds, the positive solution of the linear system
		\eqref{eq:ODE} satisfies \(\zeta\ge\zeta(\tau_*)>0\). The combination
		\(H=\zeta+C_0m\) obeys
		\[
		H'=C_0(1+C_0)\zeta e^{-\tau/2}
		\le C_0(1+C_0)e^{-\tau/2}H.
		\]
		The time coefficient is integrable, so \(H\), hence \(m,\zeta\),
		is bounded. Variation of constants gives
		\[
		m(\tau)=m(\tau_*)e^{-(\tau-\tau_*)}
		+C_0\int_{\tau_*}^{\tau}
		e^{-(\tau-\sigma)}\zeta(\sigma)e^{-\sigma/2}\,d\sigma.
		\]
		Using the positive lower and finite upper bounds for \(\zeta\),
		and positivity on the initial compact time interval, yields
		\begin{equation}\label{eq:ampbounds}
			ce^{-\tau/2}\le m(\tau)\le Ce^{-\tau/2}
			\qquad(\tau\ge\tau_*).
		\end{equation}
		Finally, each spatial profile in \eqref{eq:Phi} is
		bounded by a multiple of \(G\); at zero \(\Phi=m\), for
		\(\xi\ge2\) we have \(\Phi\ge mG\), and near zero
		\(\Phi\le C(m+\zeta\xi)\). These facts prove
		\eqref{eq:barrierP}--\eqref{eq:barriernear}.
	\end{proof}

	\subsection{One-sided comparison}
	
	The comparison starts at \(t_*>1\), when the solution is no longer compactly supported, and takes place on the unbounded half-line \(\xi\ge0\). We therefore need a finite-time Gaussian tail estimate to control the initial data and justify exhaustion by bounded cylinders.
	
	\begin{lemma}\label{lem:tails}
		For each finite \(T_1>t_*\), there are constants \(C,\overline R\),
		depending on \(T_1\), such that
		\begin{equation}\label{eq:physicaltail}
			|\nabla_xu(t,x)|+|D_x^2u(t,x)|
			\le C\exp\left(-\frac{(|x|-\overline R)^2}{4(t-1)}\right)
		\end{equation}
		for \(t_*\le t\le T_1\) and all sufficiently large \(|x|\).
		Consequently,
		\begin{equation}\label{eq:qtail}
			\sup_{\substack{\tau_*\le\tau\le\log T_1\\
					\omega\in\Sph,\ \|A\|_{\mathrm{op}}\le1}}
			(X_A^2W)_+(\tau,\xi,\omega)
			\le C_{T_1}e^{-3\xi^2/32}
			\qquad(\xi\ge0).
		\end{equation}
	\end{lemma}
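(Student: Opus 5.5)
The plan has two parts: a Gaussian bound in physical variables, then a translation into \(W\) through \eqref{eq:transform}.

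\emph{Physical tail \eqref{eq:physicaltail}.} Since \(0\le u\le K_0\), the function \(u\) is a subsolution of \(\bar u_t=\Delta\bar u+\bar u\). Comparison therefore gives
\[
u(t,x)\le e^{t-1}(G_{t-1}*u_0)(x)\le C_{T_1}\exp\!\Bigl(-\frac{(|x|-R_*)^2}{4(t-1)}\Bigr)
\]
for \(1<t\le T_1\) and \(|x|\ge R_*\).

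Next I pass this bound to derivatives. For \(t\in[t_*,T_1]\), I apply interior parabolic estimates on unit cylinders \(Q=(t-\delta,t]\times B_1(x)\), with \(\delta=\min\{1,(t_*-1)/2\}\), to the equation \(u_t=\Delta u+(1-u)u\). The coefficients are bounded, so
\[
|\nabla u|+|D^2u|\le C\sup_{Q}u.
\]
On \(Q\) the Gaussian bound holds with \(|x|\) replaced by \(|x|-1\) and \(t-1\) by at most \(T_1-1\). Taking \(\overline R=R_*+1\) yields \eqref{eq:physicaltail}, with a constant depending on \(T_1\).

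\emph{Translation into \(W\).} The transformation \eqref{eq:transform} has a prefactor independent of \(\omega\), and \(X_A\) differentiates only in \(\omega\) at fixed \((t,\rho)\) by \eqref{eq:commute}. Hence
\[
X_A^2W=t^{-1/2}e^{r}e^{\xi^2/8}X_A^2u .
\]
Expanding \(X_A^2u=(Ax)\cdot D^2u\,(Ax)+(A^2x)\cdot\nabla u\) and using \(|Ax|,|A^2x|\le|x|=\rho\), I get
\[
|X_A^2u|\le 2\rho^2\bigl(|\nabla u|+|D^2u|\bigr).
\]
On the compact time range \(\tau\in[\tau_*,\log T_1]\), the variable \(t\) is bounded. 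Thus \(\rho=R_0(t)+r\) and \(\xi=(r+3\log t)/\sqrt t\) are affinely related with bounded coefficients, namely \(r=\sqrt t\,\xi-3\log t\) and \(\rho=\sqrt t\,\xi+O(1)\).

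Now substitute into the exponent. Using \(4(t-1)\le4t\), the combined exponent in \(\xi\) is, up to linear terms,
\[
r+\frac{\xi^2}{8}-\frac{(\rho-\overline R)^2}{4(t-1)}\le -\frac{t\,\xi^2}{4t}+\frac{\xi^2}{8}+O(\xi)+C=-\frac{\xi^2}{8}+O(\xi)+C .
\]
The \(O(\xi)\) term is harmless: \(-\xi^2/8+C\xi\le-3\xi^2/32+C'\). The polynomial factor \(\rho^2\) is absorbed in the same way. This proves \eqref{eq:qtail} for \(\xi\) large.

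The bounded range of \(\xi\) remains, as does the region \(|x|\) not large where \eqref{eq:physicaltail} was not asserted. There I use the global bounds \eqref{eq:globalderivs} together with the boundedness of all prefactors on the compact set. Taking the positive part only weakens the estimate.

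\emph{Main obstacle.} The one point needing care is the Gaussian bookkeeping. The conjugation weight \(e^{r}e^{\xi^2/8}\) must be beaten by the physical decay \(e^{-\rho^2/(4(t-1))}\approx e^{-\xi^2 t/(4(t-1))}\), which is at least \(e^{-\xi^2/4}\). The margin \(\xi^2/4-\xi^2/8=\xi^2/8\) exceeds the target \(3\xi^2/32\), leaving room to absorb both the linear terms coming from \(e^{r}\) and the polynomial factors.
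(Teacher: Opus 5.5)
Your proposal is correct and follows the paper's proof essentially step for step. The shared steps are: heat-kernel comparison; interior parabolic estimates on backward cylinders of fixed size, which give $\overline R$ equal to $R_*$ plus the cylinder radius; the identity $X_A^2u=(Ax)^{T}D_x^2u\,(Ax)+(A^2x)\cdot\nabla_xu$; the observation that the $\xi^2$-coefficient $t/[4(t-1)]-1/8\ge1/8$ leaves room to absorb the linear terms and the polynomial prefactors; and the treatment of bounded $\xi$ by the global derivative bounds.

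The one place to tighten is the passage from times $s\in(t-\delta,t]$ to time $t$. Justify it by $s-1\le t-1$, which dominates the Gaussian at time $s$ by the one at time $t$, together with $s-1\ge(t_*-1)/2$ for the prefactor. Do not justify it by replacing $t-1$ with $T_1-1$: that bound would be too weak for the $\xi^2/8$ bookkeeping when $T_1\gg t_*$.
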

	
	The proof uses heat-kernel comparison and interior parabolic
	regularity; see Appendix~\ref{app:tails}.

	\begin{lemma}\label{lem:secondW}
		There is \(C>0\), independent of \(A\), such that
		\begin{equation}\label{eq:secondW}
			(X_A^2W)_+\le C\Phi(\tau,\xi)
			\quad\text{for }\tau\ge\tau_*,\quad \xi\ge0,\quad
			\omega\in\Sph,\quad \|A\|_{\mathrm{op}}\le1.
		\end{equation}
	\end{lemma}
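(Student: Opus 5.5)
The plan is a comparison argument for $Z=X_A^2W$ against a multiple of the barrier $\Phi$ of Lemma~\ref{lem:barrier}, carried out on the half-line $\xi\ge0$, $\tau\ge\tau_*$.

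\textbf{Equation for $Z$.} All coefficients in \eqref{eq:W} are independent of $\omega$, and $X_A$ commutes with $\partial_\xi$, $\partial_\tau$ and $\DS$ by \eqref{eq:commute}. Applying $X_A$ twice to \eqref{eq:W} therefore gives
\[
\PP Z=-B\,X_A^2(W^2)=-2BWZ-2B(X_AW)^2\le-2BWZ .
\]
Since $B>0$ and $W>0$, $Z$ satisfies $\PP Z+2BW\,Z\le0$. This is a subsolution inequality with the nonnegative zeroth-order coefficient $2BW$, which is the favorable sign.

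\textbf{Comparison with $\Phi$.} Set $\Psi=C\Phi$. Then $\PP\Psi\ge0$, and since $2BW\Psi\ge0$ also
\[
\PP\Psi+2BW\Psi\ge0 .
\]
The difference $Z-\Psi$ is thus a subsolution of the linear operator $\PP+2BW$. It remains to check the parabolic boundary data.

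\emph{Rear boundary $\xi=0$.} The Remark gives $|Z|\le Ct^{-5/2}=Ce^{-5\tau/2}$. By \eqref{eq:barrierleft}, $\Phi(\tau,0)\ge ce^{-\tau/2}$, so $Z\le C\Phi$ there for $C$ large, uniformly in $A$ and $\omega$.

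\emph{Initial time $\tau=\tau_*$.} Lemma~\ref{lem:tails}, applied with some $T_1>t_*$, gives $Z_+\le Ce^{-3\xi^2/32}$. By \eqref{eq:barrierfar}, $\Phi\ge ce^{-\tau_*/2}e^{-\xi^2/16}$ for $\xi\ge2$, and $3/32>1/16$, so this dominates the tail. On $[0,2]$, $\Phi(\tau_*,\cdot)$ is positive and continuous, hence bounded below, while $Z$ is bounded. Therefore $Z\le C\Phi$ at $\tau=\tau_*$.

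\textbf{Spatial infinity and the maximum principle.} The maximum principle for $(\PP+2BW)$ on $[\tau_*,\log T_1]\times[0,\infty)\times\Sph$ is applied to $(Z-C\Phi)_+$. The zeroth-order part of $\PP$ is $\xi^2/16-3/4-l_2$. By \eqref{eq:lbounds} this is bounded below, since $l_2$ grows at most linearly while $\xi^2/16$ grows quadratically, so a standard exponential-in-time rescaling makes the coefficient positive. On each finite time slab, \eqref{eq:qtail} gives $Z_+\le C_{T_1}e^{-3\xi^2/32}$. Comparing with $\varepsilon e^{\mu(\tau-\tau_*)}\cosh(\xi)$, or simply exhausting by $\xi\le R$ and letting $R\to\infty$, shows there is no positive maximum at infinity. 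This is the Phragm\'en--Lindel\"of justification deferred to the appendix.

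\textbf{Main obstacle.} The main delicacy is that $C$ must be uniform as $T_1\to\infty$. It is, because the boundary and initial comparisons use constants independent of $T_1$: the $C_{T_1}$ from \eqref{eq:qtail} is used only to exclude maxima at infinity, not in the size of $C$. The supremum over $\|A\|_{\mathrm{op}}\le1$ is uniform because all bounds used (Lemma~\ref{lem:firstmixed} and Lemma~\ref{lem:tails}) are uniform in $A$. Letting $T_1\to\infty$ then yields \eqref{eq:secondW}.
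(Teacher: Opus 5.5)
Your proposal is correct and follows essentially the same route as the paper: the favorable-sign equation $(\PP+2BW)X_A^2W=-2B(X_AW)^2\le0$, then comparison with a multiple of $\Phi$. The constant is fixed by the rear-boundary bound $Ct^{-5/2}$ against \eqref{eq:barrierleft} and by the initial Gaussian tail against \eqref{eq:barrierfar}, and exhaustion by bounded cylinders then gives a constant independent of $T_1$ and $A$. The only step to make explicit in the exhaustion version is the lateral boundary $\xi=L_1$: there you reuse the $3/32>1/16$ comparison from $\tau=\tau_*$, now with $\Phi\ge cT_1^{-1/2}e^{-\xi^2/16}$ on $[\tau_*,\log T_1]$ and \eqref{eq:qtail}, which is exactly how the paper rules out a positive maximum at infinity.
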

	
	\begin{proof}
		We follow the scalar-comparison strategy of
		\citet[Proposition~6]{RRM2019},
		using the one-sided sign noted in their Remark~1. For a fixed admissible \(A\), put
		\(q_A=X_A^2W\). By \eqref{eq:commute} and the angular independence
		of the coefficients in \eqref{eq:W},
		\begin{equation}\label{eq:qeq}
			\PP q_A+2BWq_A=-2B(X_AW)^2.
		\end{equation}
		Since \(B,W,\Phi\ge0\) and \(\PP\Phi\ge0\), the smooth difference
		\(z_A=q_A-C_*\Phi\), for any \(C_*>0\), satisfies
		\begin{equation}\label{eq:directcomparison}
			(\PP+2BW)z_A
			=-2B(X_AW)^2-C_*(\PP\Phi+2BW\Phi)\le0.
		\end{equation}
		
		We choose \(C_*\) using the initial and rear-boundary data.
		At \(\xi=0\), \(r=-3\log t\) and \(\rho=O(t)\).
		Lemma~\ref{lem:firstmixed} gives
		\[
		|X_A^2u|=|(Ax)\cdot\nabla_x(X_Au)|\le C\rho\le Ct.
		\]
		Consequently, uniformly in \(A,\omega\),
		\begin{equation*}
			|q_A(\tau,0,\omega)|
			=t^{-1/2}t^{-3}|X_A^2u|\le Ct^{-5/2}.
		\end{equation*}
		Together with \eqref{eq:barrierleft}, this bounds the ratio
		\((q_A)_+/\Phi\) on the entire rear boundary by \(Ce^{-2\tau}\).
		At the initial time, apply Lemma~\ref{lem:tails} with the fixed
		horizon \(T_0=t_*+1\). Positivity of \(\Phi(\tau_*,\cdot)\)
		on compact intervals and \eqref{eq:barrierfar} in the tail give
		\[
		(q_A)_+(\tau_*,\xi,\omega)\le C_*\Phi(\tau_*,\xi),\qquad
		(q_A)_+(\tau,0,\omega)\le C_*\Phi(\tau,0).
		\]
		Choose \(C_*\ge1\) so that both inequalities hold for all admissible
		\(A\). 
		
		Now fix \(T_1>t_*\). For \(\xi\ge2\),
		\eqref{eq:qtail} and \eqref{eq:barrierfar} imply
		\[
		\sup_{\substack{\tau_*\le\tau\le\log T_1\\
				\omega\in\Sph,\ \|A\|_{\mathrm{op}}\le1}}
		\frac{(q_A)_+(\tau,\xi,\omega)}{\Phi(\tau,\xi)}
		\le\frac{C_{T_1}\sqrt{T_1}}c e^{-\xi^2/32}
		\longrightarrow0.
		\]
		Hence, for every sufficiently large \(L_1\), depending on \(T_1\)
		but not on \(A\), we have \(z_A\le0\) on the parabolic boundary of
		\[
		Q=(\tau_*,\log T_1)\times(0,L_1)\times\Sph.
		\]
		On \(\overline Q\), all coefficients are bounded and
		\(\alpha\) is bounded away from zero. The standard parabolic
		comparison principle, after the usual exponential normalization
		of the zeroth-order term, gives \(z_A\le0\) on \(\overline Q\).
		
		For this fixed \(T_1\), taking arbitrarily large \(L_1\) covers every
		finite \(\xi\). Since \(C_*\) was chosen before \(T_1\), letting
		\(T_1\) increase covers every finite \(\tau\ge\tau_*\) with the same
		constant. Thus \(q_A\le C_*\Phi\) everywhere, and hence
		\((q_A)_+\le C_*\Phi\), uniformly for
		\(\|A\|_{\mathrm{op}}\le1\).
	\end{proof}

	We now return to physical variables to complete the proof of Proposition~\ref{prop:coreHess}.
	
	\begin{proof}[Proof of Proposition~\ref{prop:coreHess}]
		{
			We first prove (i). For \(0\le\xi\le a_0/2\),
			\eqref{eq:barriernear} gives
			\(\Phi(\tau,\xi)\le C(e^{-\tau/2}+\xi)\).
			For \(\xi\ge a_0/2\), the Gaussian upper bound in
			\eqref{eq:barrierP} gives
			\(\Phi(\tau,\xi)\le Ce^{-\xi^2/16}\le C\xi\).
			Consequently,
			\[
			\Phi(\tau,\xi)\le C(e^{-\tau/2}+\xi)
			\qquad (\tau\ge\tau_*,\ \xi\ge0).
			\]
			For \(t\ge t_*\) and \(r\ge-3\log t\), we have
			\(\xi=(r+3\log t)/\sqrt t\ge0\).
			Since the prefactor in \eqref{eq:transform} is radial,
			Lemma~\ref{lem:secondW} gives, uniformly in \(\omega\) and
			\(\|A\|_{\mathrm{op}}\le1\),
			\[
			\begin{aligned}
				X_A^2u\bigl(t,(\RR(t)+r)\omega\bigr)
				&\le C\sqrt t\,e^{-r-\xi^2/8}\Phi(\tau,\xi)\\
				&\le Ce^{-r}(1+r+3\log t).
			\end{aligned}
			\]
			
			We now derive (ii). Fix \(\omega\in\Sph\) and a unit tangent vector
			\(v\perp\omega\). Choose
			\(A=v\otimes\omega-\omega\otimes v\); then
			\[
			A\omega=v,\quad Av=-\omega,\qquad
			e^{sA}\omega=(\cos s)\omega+(\sin s)v.
			\]
			The rotation orbit is a unit-speed great circle, so
			\(X_A^2u=\HS u[v,v]\) at fixed \(\rho\).
			For a fixed \(L>0\), choose \(t\) large enough that
			\(3\log t\ge L\). On \(|r|\le L\), \eqref{eq:globalRotation} then yields
			\[
			\HS u(t,\rho,\omega)[v,v]
			\le Ce^{-r}(1+r+3\log t)
			\le C_L\log t.
			\]
			Since \(v\) was arbitrary, this proves (ii).
		}
	\end{proof}
	
	\section{Front geometry and logarithmic-Hessian estimates}\label{sec:front-geometry-concavity}
	
	We first establish the compact-level geometry and curvature bounds, then obtain a negative upper bound for the full logarithmic Hessian in the fixed-front region. Using the fixed-front estimates, we propagate negativity of the ordinary Hessian through the invaded interior.
	
	\subsection{Geometry and curvature on compact level intervals}\label{sec:geometry}

	\begin{proof}[Proof of Theorem~\ref{thm:main}]
		Fix a closed interval \(K\subset(0,1)\). It suffices to work with
		\(K=[\varepsilon,1-\varepsilon]\), where \(0<\varepsilon<1/2\). 
		We first establish the radial-graph representation
		\eqref{eq:radialmain} and the location asymptotics
		\eqref{eq:locationmain}.
		Set
		\[
		J=[z_{1-\varepsilon}-1,z_\varepsilon+1],\qquad
		m=\min_{z\in J}(-\UU'(z))>0,
		\]
		where \(z_\lambda=\UU^{-1}(\lambda).\)
		At the endpoints of \(J\), the values of \(\UU\) lie strictly above
		\(1-\varepsilon\) and below \(\varepsilon\), respectively.
		The uniform convergence \eqref{eq:rrm} therefore shows that, for all
		sufficiently large \(t\), the same inequalities hold for \(u\) at the
		corresponding radii, uniformly in \(\omega\).
		By the intermediate value theorem, each ray meets
		\(\Gamma_\lambda(t)\) for every \(\lambda\in K\).
		By the strict monotonicity of \(\UU\) and the same uniform convergence,
		every such intersection \((\rho,\omega)\) lies in the phase band
		\(\rho-\RR(t)+s^\infty(\omega)\in J\).
		Since \(s^\infty\) is bounded, this phase band lies in a fixed-front
		region \(|\rho-\RR(t)|\le L_K\) for some \(L_K>0\).
		Lemma~\ref{lem:radialC1} gives \(-u_\rho\ge m/2\)
		throughout the phase band
		\(\rho-\RR(t)+s^\infty(\omega)\in J\)
		for all sufficiently large \(t\). Thus each ray has exactly one crossing,
		and the implicit function theorem gives the smooth radial graph
		\eqref{eq:radialmain}. We have also established, uniformly in
		\(\lambda\in K\) and \(\omega\in\Sph\),
		\begin{equation}\label{eq:nondegenerate}
			-u_\rho(t,R_\lambda(t,\omega),\omega)\ge c_K:=m/2>0.
		\end{equation}
		At a crossing,
		\[
		\UU(\rho-\RR(t)+s^\infty(\omega))=\lambda+o(1)
		\]
		uniformly in \(\lambda\) and \(\omega\). Since \(-\UU'\ge m\) on \(J\),
		\[
		\rho-\RR(t)+s^\infty(\omega)=z_\lambda+o(1)
		\]
		uniformly, proving \eqref{eq:locationmain}.

		We next establish the derivative bounds \eqref{eq:derivmain}.
		Differentiate \(u(t,R_\lambda(t,\omega),\omega)=\lambda\) at fixed \(t\).
		All derivatives of \(u\) below are evaluated at
		\((t,R_\lambda(t,\omega),\omega)\), with angular derivatives first
		taken at fixed \(\rho\). One differentiation gives
		\begin{equation*}
			\nS R_\lambda=-\frac{\nS u}{u_\rho}.
		\end{equation*}
		In fact, the reflection estimate \eqref{eq:angularcone} already yields
		\begin{equation}\label{eq:Rgradient}
			|\nS R_\lambda|
			\le\frac{R_*}{\sqrt{1-R_*^2/R_\lambda^2}}\le C
			\qquad(t\text{ sufficiently large}).
		\end{equation}
		For the second derivatives, the covariant chain rule gives
		\begin{equation}\label{eq:implicit2}
			\begin{split}
				0={}&u_\rho\HS R_\lambda+\HS u
				+d_Su_\rho\otimes dR_\lambda+dR_\lambda\otimes d_Su_\rho\\
				&\hspace{25mm}+u_{\rho\rho}\,dR_\lambda\otimes dR_\lambda .
			\end{split}
		\end{equation}
		Equation \eqref{eq:implicit2} is an identity of symmetric \(2\)-tensors on
		\(T_\omega\mathbb S^{N-1}\).
		Applying {\eqref{eq:coreHess}} with \(L=L_K\)
		and using Lemma~\ref{lem:firstmixed},
		\eqref{eq:nondegenerate}, and \eqref{eq:Rgradient}
		in \eqref{eq:implicit2}, we obtain, for every unit tangent
		vector \(v\),
		\begin{align}
			\HS R_\lambda[v,v]
			&=\frac{\HS u[v,v]
				+2\,d_Su_\rho[v]\,dR_\lambda[v]
				+u_{\rho\rho}\bigl(dR_\lambda[v]\bigr)^2}{-u_\rho}\notag\\
			&\le C(\log t+1)\le C\log t. \label{eq:RHess}
		\end{align}
		The uniform lower bound on \(-u_\rho\) is essential: an upper bound on
		\(\HS u\) gives the upper bound on \(\HS R_\lambda\) required by
		the curvature formula.
		
		To establish the curvature bound \eqref{eq:curvmain},
		write \(R=R_\lambda(t,\cdot)\) and parametrize the level hypersurface by
		\(F(\omega)=R(\omega)\omega\). Its outward normal is
		\[
		\nu=\frac{R\omega-\nS R}{\sqrt{R^2+|\nS R|^2}}.
		\]
		The first and second fundamental forms, with the convention specified
		before Theorem~\ref{thm:main}, are
		\begin{equation}\label{eq:forms}
			I=R^2\gS+dR\otimes dR,\qquad
			\mathrm{II}
			=\frac{R^2\gS+2\,dR\otimes dR-R\HS R}
			{\sqrt{R^2+|\nS R|^2}}.
		\end{equation}
		For a spherical unit tangent vector \(v\), \eqref{eq:RHess} gives,
		once \(R-C\log t>0\),
		\begin{equation}\label{eq:rayleigh}
			\frac{\mathrm{II}(v,v)}{I(v,v)}
			\ge
			\frac{R^2-CR\log t}{(R^2+|\nS R|^2)^{3/2}}.
		\end{equation}
		From \eqref{eq:locationmain} and \eqref{eq:Rgradient},
		\[
		R=2t+O(\log t),\qquad |\nS R|=O(1),
		\]
		uniformly in \(\lambda,\omega\). Substituting into \eqref{eq:rayleigh} and enlarging \(C\) if necessary,
		we obtain, after possibly increasing \(T\),
		\begin{equation}\label{eq:curvproved}
			\frac{\mathrm{II}(v,v)}{I(v,v)}
			\ge \frac{1}{2t}\left(1-C\frac{\log t}{t}\right)>0,
			\qquad t\ge T.
		\end{equation}
		The principal curvatures are the eigenvalues of \(\mathrm{II}\) with
		respect to \(I\). The Rayleigh-quotient bound proves
		\eqref{eq:curvmain} for all \(N-1\) principal curvatures.

		Since $\Gamma_\lambda(t)$ is a smooth closed radial graph
		with positive definite second fundamental form,
		the classical Hadamard convexity theorem
		(or its planar counterpart when $N=2$) implies
		that it bounds a strictly convex body.
		Moreover, the local uniform convergence
		$u(t,\cdot)\to 1$ in $\mathbb R^N$ gives
		$u(t,0)>\max K$ for all sufficiently large $t$.
		The unique intersection of each ray from the origin
		with $\Gamma_\lambda(t)$ then shows that this body
		is precisely $\{u(t,\cdot)\ge\lambda\}$.
	\end{proof}

	\subsection{Critical-wave concavity and the fixed-front logarithmic Hessian}\label{subsec:fixed-front-loghessian}
	
	We first establish the strict log-concavity of the critical travelling wave. 
	
	\begin{lemma}\label{lem:waveconcavity}
		The profile \(\UU\) satisfies
		\begin{equation}\label{eq:wavelogconcave}
			(\log\UU)''(z)<0\qquad(z\in\R).
		\end{equation}
		Moreover,
		\begin{equation}\label{eq:waveconcavebehind}
			\UU''(z)<0\qquad\text{whenever }\UU(z)\ge\frac12.
		\end{equation}
	\end{lemma}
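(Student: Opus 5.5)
Both claims will come from a phase-plane analysis of the first-order system associated with \eqref{eq:wave}. Write \(U=\UU\), \(P=U'<0\), and introduce the logarithmic derivative \(\theta(z)=-U'(z)/U(z)>0\). Then \((\log U)''=-\theta'\), so \eqref{eq:wavelogconcave} is equivalent to \(\theta'>0\) on \(\R\). From \eqref{eq:wave},
\[
\theta'=-\frac{U''}{U}+\theta^2=\frac{2U'+U-U^2}{U}+\theta^2=\theta^2-2\theta+1-U=(\theta-1)^2-U .
\]
The limits of \(\theta\) are known from the asymptotics of the critical wave. As \(z\to+\infty\), \(U\sim(az+b)e^{-z}\), so \(\theta\to1\) with \(\theta-1=-a/(az+b)+\dots\). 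As \(z\to-\infty\), linearizing at \(U=1\) gives \(1-U\sim ce^{\mu z}\) with \(\mu=\sqrt2-1\), so \(\theta\to0\).

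The main step is to show \(F:=(\theta-1)^2-U>0\) everywhere. I would argue by contradiction using an ODE for \(F\) along trajectories. Differentiating,
\[
F'=2(\theta-1)\theta'-U'=2(\theta-1)F+U\theta .
\]
At any zero \(z_0\) of \(F\), this gives \(F'(z_0)=U\theta>0\). Hence \(F\) can only cross zero upward, and the set \(\{F>0\}\) is forward invariant in \(z\).

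It therefore suffices to know that \(F>0\) as \(z\to-\infty\). There \(\theta\to0\) and \(U\to1\), so \(F\to0\), and the sign must be checked at the next order. We have \(1-U\approx ce^{\mu z}\) and \(\theta=-U'/U\approx c\mu e^{\mu z}\), so
\[
F=\theta^2-2\theta+(1-U)\approx c(1-2\mu)e^{\mu z}.
\]
Since \(\mu=\sqrt2-1<1/2\), we get \(1-2\mu=3-2\sqrt2>0\), and \(F>0\) for \(z\ll-1\) (taking \(c>0\), which follows from \(U<1\)). Forward invariance then gives \(F>0\) on all of \(\R\), which proves \eqref{eq:wavelogconcave}.

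The step I expect to be delicate is justifying this expansion at \(-\infty\), including that the leading coefficient is nonzero. For this I would use the standard stable-manifold asymptotics at the saddle \((1,0)\): the eigenvalues there are \(-1\pm\sqrt2\), and the trajectory leaving at \(-\infty\) follows the unstable direction \(\mu=\sqrt2-1\). The relation \(U'\approx-\mu(1-U)\) is exactly what is needed. As a backup, one can verify \(F>0\) at \(+\infty\) instead: there \(F\approx a^2/(az+b)^2-(az+b)e^{-z}>0\). One would then run the invariance argument backward, using the fact that \(\{F<0\}\) is backward invariant.

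For \eqref{eq:waveconcavebehind}, note that \(U''=-2U'-U+U^2=U\bigl(2\theta-(1-U)\bigr)\). So \(U''<0\) is equivalent to \(\theta<(1-U)/2\). We have \(\theta^2-2\theta+1-U=F>0\), and \(\theta<1\) on the region where \(U\ge1/2\), which I still need to justify. On that region, \(F>0\) forces \(\theta<1-\sqrt U\). It would then suffice to show \(1-\sqrt U\le(1-U)/2\), i.e. \((1-\sqrt U)^2\le0\), which fails. So the crude bound is not enough, and a sharper comparison is needed. I would instead use a second invariant quantity, \(E:=(1-U)/2-\theta\). Since \(\theta'=F\) and \(U'=-U\theta\),
\[
E'=\frac{U\theta}{2}-F .
\]
At a zero of \(E\) we have \(\theta=(1-U)/2\). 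Writing \(s=1-U\), this gives \(E'=\tfrac12 s^2(\tfrac34-s)\), which is positive when \(s<3/4\). Thus \(E\) crosses zero only upward while \(U>1/4\). The asymptotics at \(-\infty\) give \(E\approx(\tfrac12-\mu)s>0\). Because zeros of \(E\) can only be upward crossings there, \(E\) cannot become negative while \(U>1/4\). Hence \(E>0\) on the whole interval where \(U\ge1/2\), since that interval is \((-\infty,z_{1/2}]\). This gives \(U''<0\) there, and also \(\theta<1/2<1\).
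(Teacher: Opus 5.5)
Your proof of \eqref{eq:wavelogconcave} is correct, and it takes a different route from the paper. You show $F=(\theta-1)^2-U>0$ by a first-crossing argument: zeros of $F$ have $F'=U\theta>0$, and the unstable-manifold expansion $F\approx(1-2\mu)s$ gives positivity at $-\infty$. The paper instead integrates the exact identity $(e^{2z}U^2(\log U)'')'=e^{2z}U^2U'$ from $-\infty$, which needs no asymptotic expansion. You can also drop the expansion. If $F\le 0$ somewhere, your crossing property gives $F<0$ on a whole left half-line. Then $\theta'<0$ there, which is impossible since $\theta>0$ and $\theta\to0$ at $-\infty$. Your ``backup'' via $+\infty$ does not work: $F>0$ near $+\infty$ is compatible with a single upward crossing, so backward invariance of $\{F<0\}$ yields no contradiction.

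The gap is in the second part: the key computation of $E'$ at a zero of $E$ is wrong. There $\theta=s/2$, $U=1-s$ and $F=(s/2-1)^2-(1-s)=s^2/4$, so
\[
E'=\frac{U\theta}{2}-F=\frac{s(1-s)}{4}-\frac{s^2}{4}=\frac{s(1-2s)}{4},
\]
not $\tfrac12 s^2(\tfrac34-s)$. As a check, $E=-U''/(2U)$, and at a zero of $U''$ the differentiated wave equation gives $U'''=(2U-1)U'$.

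Consequently, zeros of $E$ are forced to be upward crossings only where $U>1/2$. Your claim that $E$ cannot become negative while $U>1/4$ is actually false: the critical wave has its inflection point strictly between $1/4$ and $1/2$ (numerically near $U\approx0.44$).

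With the corrected formula, your first-zero argument does give $E>0$ on $\{U>1/2\}$. It is inconclusive at the endpoint $U=1/2$, which the statement includes, because a zero of $E$ there would have $E'=0$. You need one more step. Suppose $U''(z_{1/2})=0$. Then $U'''(z_{1/2})=0$, and differentiating $U'''+2U''+(1-2U)U'=0$ once more gives $U''''(z_{1/2})=2U'(z_{1/2})^2>0$. Hence $U''>0$ just to the left of $z_{1/2}$, contradicting $E>0$ there.

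The paper sidesteps both issues with the integrating-factor formula $U''(z)=-e^{-2z}\int_{-\infty}^z e^{2s}(1-2U(s))U'(s)\,ds$. Its integrand is strictly positive on $(-\infty,z)$ whenever $U(z)\ge1/2$, so the endpoint case comes for free.
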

	
	\begin{proof}

		Write \(U=\UU\) and \(Q=\log U\). Dividing \eqref{eq:wave} by
		\(U\), then differentiating, gives
		\[
		Q''+(Q')^2+2Q'+1-U=0,\qquad
		Q'''+2(1+Q')Q''=U'.
		\]
		Consequently,
		\((e^{2z}U^2Q'')'=e^{2z}U^2U'\).
		Since \(U(z)\to1\) as \(z\to-\infty\) and its
		derivatives are bounded, the integrating-factor
		boundary term vanishes. Thus
		\[
		Q''(z)=\frac{e^{-2z}}{U(z)^2}
		\int_{-\infty}^z e^{2s}U(s)^2U'(s)\,ds<0,
		\]
		since \(U'<0\).
		
		Differentiating \eqref{eq:wave} directly gives
		\(U'''+2U''+(1-2U)U'=0\).
		Another integrating factor yields
		\begin{equation}\label{eq:wave-second-integral}
			U''(z)=-e^{-2z}\int_{-\infty}^z
			e^{2s}(1-2U(s))U'(s)\,ds.
		\end{equation}
		If \(U(z)\ge1/2\), strict monotonicity implies \(U(s)>1/2\)
		for \(s<z\); the integrand in \eqref{eq:wave-second-integral}
		is then positive. This proves \eqref{eq:waveconcavebehind}.
	\end{proof}
	
	In the fixed-front region, this limiting profile provides the required radial negativity. Combining the resulting radial bounds with the one-sided spherical Hessian estimate of Proposition~\ref{prop:coreHess}(ii) and the mixed derivative estimates of Lemma~\ref{lem:firstmixed}, we obtain the full logarithmic-Hessian bound in each fixed-front region.
	
	\begin{proposition}\label{prop:frontlog}
		For every \(L>0\), there are \(T_L,c_L>0\) such that
		\begin{equation}\label{eq:frontlog}
			D_x^2\log u(t,x)\le-\frac{c_L}{t}\id
			\quad\text{if }t\ge T_L,\quad
			\bigl||x|-\RR(t)\bigr|\le L.
		\end{equation}
	\end{proposition}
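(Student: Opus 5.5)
We work in polar coordinates \(x=\rho\omega\) and write the Hessian of \(w=\log u\) in the orthonormal frame \(\{\omega\}\cup\{v/\rho: v\in T_\omega\Sph\ \text{unit}\}\). For a radial unit vector and tangential unit vectors \(e=v\) (identified with the direction in \(\R^N\)),
\[
D_x^2w[\omega,\omega]=w_{\rho\rho},\qquad
D_x^2w[\omega,v]=\frac{(\nS w_\rho)[v]}{\rho}-\frac{(\nS w)[v]}{\rho^2},\qquad
D_x^2w[v,v]=\frac{\HS w[v,v]}{\rho^2}+\frac{w_\rho}{\rho}.
\]
The plan is to show the radial entry is bounded above by a fixed negative constant, the tangential entries by \(-c/t\), and the mixed entries are \(O(1/t)\). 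The conclusion then follows because a symmetric block matrix with entries \(a\le-c_1\), \(D\le-(c_2/t)\id\), and off-diagonal block of size \(b=O(1/t)\) satisfies \(a|\xi_1|^2+2b|\xi_1||\xi'|+D\le -(c/t)\) once \(b^2\ll c_1c_2/t\), which holds since \(b^2=O(t^{-2})\).

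\textbf{Radial entry.} Since \(u\) is bounded below by a positive constant on \(\{|\rho-\RR(t)|\le L\}\) for large \(t\) (by \eqref{eq:rrm} and \(s^\infty\) bounded), Lemma~\ref{lem:radialC1} gives \(w_{\rho\rho}=(u_{\rho\rho}u-u_\rho^2)/u^2\to(\log\UU)''(\rho-\RR+s^\infty)\) uniformly. The argument of \(\UU\) ranges over a compact set, and \eqref{eq:wavelogconcave} makes \((\log\UU)''\le-2c_1<0\) there, so \(w_{\rho\rho}\le-c_1\) for \(t\ge T_L\).

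\textbf{Tangential entries.} We have \(\HS w=\HS u/u-d_Su\otimes d_Su/u\le \HS u/u\le C_L\log t\,\gS\) by \eqref{eq:coreHess} and the lower bound on \(u\). Also \(w_\rho=u_\rho/u\to\UU'/\UU\le-2c_2<0\) on the compact phase band by Lemma~\ref{lem:radialC1} (and \(u_\rho<0\) from Lemma~\ref{lem:reflection}). With \(\rho=2t+O(\log t)\),
\[
D_x^2w[v,v]\le \frac{C\log t}{t^2}-\frac{c_2}{t}\le-\frac{c_2}{2t}
\]
for large \(t\).

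\textbf{Mixed entries.} By Lemma~\ref{lem:firstmixed}, \(|\nS u|,|\partial_\rho\nS u|,|u_\rho|\le C\). Together with \(u\ge c\), this gives \(|\nS w|+|\nS w_\rho|\le C\), so the mixed entries are \(O(1/\rho)=O(1/t)\). For a general unit vector \(\xi=\xi_1\omega+\xi'\), we then have
\[
D_x^2w[\xi,\xi]\le -c_1\xi_1^2+\frac{C}{t}|\xi_1||\xi'|-\frac{c_2}{2t}|\xi'|^2 .
\]
Young's inequality, \(\frac{C}{t}|\xi_1||\xi'|\le \frac{c_1}{2}\xi_1^2+\frac{C^2}{2c_1t^2}|\xi'|^2\), absorbs the cross term for large \(t\), yielding \(\le-(c_L/t)|\xi|^2\).

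\textbf{Main obstacle.} The delicate point is the tangential entry. It hinges on the one-sided bound \eqref{eq:coreHess}, which loses only \(\log t\) against \(\rho^2\sim t^2\), together with the uniform strict negativity of \(w_\rho\). Both hold on the fixed-front band, so I expect no further difficulty beyond verifying the uniform positivity of \(u\) there.
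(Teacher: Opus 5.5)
Your proposal is correct and follows the paper's proof. Both arguments get radial negativity from Lemma~\ref{lem:radialC1} and the strict log-concavity of \(\UU\), tangential entries of order \(-1/t\) by weighing \(q_\rho/\rho\) against the one-sided bound \eqref{eq:coreHess} divided by \(\rho^2\sim t^2\), mixed entries of order \(1/t\) from Lemma~\ref{lem:firstmixed}, and absorb the cross term by Young's inequality. Two harmless slips: the subtracted term in \(\HS w\) should be \(d_Su\otimes d_Su/u^2\), and \(|u_\rho|\le C\) comes from \eqref{eq:globalderivs} rather than Lemma~\ref{lem:firstmixed}.
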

	
	\begin{proof}
		Set \(q=\log u\). In the fixed-front region, \eqref{eq:rrm}
		and Lemma~\ref{lem:radialC1} give, uniformly in \(\rho,\omega\),
		\[
		q_\rho\longrightarrow
		(\log\UU)'(\rho-\RR(t)+s^\infty(\omega)),\qquad
		q_{\rho\rho}\longrightarrow
		(\log\UU)''(\rho-\RR(t)+s^\infty(\omega)).
		\]
		All phases here lie in one compact interval. By
		Lemma~\ref{lem:waveconcavity} and \(\UU'<0\), there exist
		\(a_L,b_L>0\) such that, for large \(t\),
		\begin{equation}\label{eq:radial-log-negativity}
			q_{\rho\rho}\le-a_L,\qquad q_\rho\le-b_L.
		\end{equation}
		The same compactness gives a positive lower bound for \(u\) in
		this region. Hence Lemma~\ref{lem:firstmixed} and
		{\eqref{eq:coreHess}} imply
		\begin{equation}\label{eq:angular-log-bounds}
			|\nS q|+|\partial_\rho\nS q|\le C_L,\qquad
			\HS q=\frac{\HS u}{u}-\frac{d_Su\otimes d_Su}{u^2}
			\le C_L\log t\,\gS.
		\end{equation}
		
		At \(x=\rho\omega\), identify \(T_\omega\Sph\) with
		\(\omega^\perp\subset\R^N\). For any smooth scalar function
		\(f=f(\rho,\omega)\) and \(v\perp\omega\), the polar-coordinate
		identities are
		\begin{equation}\label{eq:euclidean-polar-hessian}
			\begin{aligned}
				D_x^2f[\omega,\omega]&=f_{\rho\rho},\\
				D_x^2f[\omega,v]
				&=\rho^{-1}d_Sf_\rho[v]-\rho^{-2}d_Sf[v],\\
				D_x^2f[v,v]
				&=\rho^{-1}f_\rho|v|^2+\rho^{-2}\HS f[v,v].
			\end{aligned}
		\end{equation}
		Since \(\rho=2t+O(\log t)\),
		\eqref{eq:radial-log-negativity}--\eqref{eq:angular-log-bounds}
		give, after increasing the starting time,
		\[
		D_x^2q[v,v]\le-\frac{b}{t}|v|^2,
		\qquad |D_x^2q[\omega,v]|\le\frac Ct|v|,
		\]
		where \(b>0\) and \(C\) depend on \(L,N,u_0\).
		For \(\zeta=a\omega+v\), Young's inequality yields
		\[
		\begin{aligned}
			D_x^2q[\zeta,\zeta]
			&\le-a_La^2+\frac{2C}{t}|a||v|-\frac bt|v|^2\\
			&\le-\frac{a_L}{2}a^2
			-\left(\frac bt-\frac{2C^2}{a_Lt^2}\right)|v|^2
			\le-\frac{c_L}{t}|\zeta|^2
		\end{aligned}
		\]
		for all sufficiently large \(t\). This proves \eqref{eq:frontlog}.
	\end{proof}

	\subsection{Hessian control in the invaded interior}\label{subsec:invaded-interior}
	
	We first use the fixed-front estimates above to provide negative Hessian bounds on a moving sphere at a fixed distance behind the front.

	\begin{lemma}\label{lem:rearboundary}
		There are \(L_0>0\), \(T>1\) and \(c_0>0\) such that, with
		\begin{equation}\label{eq:inner-radius}
			R_{\mathrm{in}}(t)=\RR(t)-L_0,
			\qquad \mathcal D(t)=B_{R_{\mathrm{in}}(t)},
		\end{equation}
		we have, for every \(t\ge T\),
		\begin{equation}\label{eq:inner-state}
			0<R_{\mathrm{in}}(t)\le2t,\qquad
			u(t,x)\ge\frac{19}{20}\quad(x\in\overline{\mathcal D(t)}),
		\end{equation}
		and
		\begin{equation}\label{eq:rear-Hessian}
			D_x^2u(t,x)\le-\frac{c_0}{t}\id
			\qquad(x\in\partial\mathcal D(t)).
		\end{equation}
	\end{lemma}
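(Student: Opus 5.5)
The plan is to fix \(L_0\) large enough that the moving sphere \(\partial\mathcal D(t)\) lies, after adding the phase, in the region where the critical wave exceeds \(1/2\) and in fact is close to \(1\). Lemma~\ref{lem:waveconcavity} then gives strict concavity of \(\UU\) there. After that, we repeat the polar decomposition argument of Proposition~\ref{prop:frontlog}, but for \(u\) itself rather than for \(\log u\).

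\emph{Choice of \(L_0\) and the state bound.} Since \(s^\infty\) is bounded, choose \(L_0\) so large that \(\UU(z)\ge 39/40\) for all \(z\le -L_0+\|s^\infty\|_\infty+1\). By \eqref{eq:rrm}, for large \(t\) we get \(u\ge 19/20\) on the annulus \(\RR(t)-L_0-1\le\rho\le\RR(t)-L_0\). For the full ball \(\overline{\mathcal D(t)}\), use the uniform statement \eqref{eq:rrm}, which holds for all \(\rho>0\): for \(\rho\le R_{\mathrm{in}}(t)\) the phase \(\rho-\RR(t)+s^\infty(\omega)\) is at most \(-L_0+\|s^\infty\|_\infty\), so \(\UU\ge39/40\) there, and the \(o(1)\) error gives \(u\ge19/20\) for \(t\ge T\). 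The bound \(0<R_{\mathrm{in}}\le2t\) is immediate from \(\RR(t)=2t-k\log t\) for large \(t\).

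\emph{Hessian on the sphere.} Work in the fixed-front region \(L=L_0\), so Lemma~\ref{lem:radialC1}, Lemma~\ref{lem:firstmixed} and \eqref{eq:coreHess} are all available.
\begin{itemize}[leftmargin=*]
\item \textbf{Radial direction.} By Lemma~\ref{lem:radialC1}, \(u_{\rho\rho}\to\UU''\) uniformly. The relevant phases lie in a compact subset of \(\{\UU\ge1/2\}\), where \eqref{eq:waveconcavebehind} and compactness give \(\UU''\le-2a\). Hence \(u_{\rho\rho}\le -a\) for large \(t\). Likewise \(u_\rho\le -b<0\), because \(\UU'<0\) on compacts.
\item \textbf{Tangential directions.} Using \eqref{eq:euclidean-polar-hessian} with \(f=u\), we get \(D^2u[v,v]=\rho^{-1}u_\rho+\rho^{-2}\HS u[v,v]\le -b/\rho+C\log t/\rho^2\le -b/(4t)\). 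Here we use \(\rho\le2t\) and \(\rho\ge t\).
\item \textbf{Mixed terms.} Lemma~\ref{lem:firstmixed} gives \(|D^2u[\omega,v]|\le C/t\).
\end{itemize}
The Young's inequality step of Proposition~\ref{prop:frontlog} then yields \(D^2u\le -(c_0/t)\id\) on \(\partial\mathcal D(t)\).

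\emph{Main obstacle.} The delicate point is the strict concavity constant. We need \(\UU''\) bounded away from zero on a compact phase interval deep behind the front. Since \(\UU''\to0\) as \(z\to-\infty\), \(L_0\) must be fixed first and not sent to infinity. The value \(a\) then depends on \(L_0\), which is harmless. Everything else is a direct reuse of the fixed-front estimates.
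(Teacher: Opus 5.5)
Your proposal is correct and follows essentially the same route as the paper. You fix \(L_0\) so that all phases \(\rho-\RR(t)+s^\infty(\omega)\) with \(\rho\le R_{\mathrm{in}}(t)\) lie where \(\UU\ge 39/40\), and obtain \(u\ge19/20\) on the ball from \eqref{eq:rrm} and monotonicity; the origin, which \eqref{eq:rrm} does not cover since it is stated for \(\rho>0\), follows by continuity as the paper notes. You then combine Lemma~\ref{lem:radialC1}, \eqref{eq:waveconcavebehind}, Lemma~\ref{lem:firstmixed} and \eqref{eq:coreHess} with \(L=L_0\), and absorb the mixed block by the Young-inequality step of Proposition~\ref{prop:frontlog}.
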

	
	\begin{proof}
		Choose \(L_0>0\) large enough that
		\[
		\min_{\omega\in\Sph}\UU(-L_0+s^\infty(\omega))
		>\frac{39}{40}.
		\]
		Monotonicity of \(\UU\) and the global uniform convergence
		\eqref{eq:rrm} then imply \(u\ge19/20\) throughout
		\(\overline{\mathcal D(t)}\),
		once the error in that convergence
		is at most \(1/40\). At the origin this follows by continuity.
		After increasing \(T\), \(R_{\mathrm{in}}\) is positive and
		smooth (and may be taken increasing); its upper bound by \(2t\)
		follows from \(k,L_0>0\).
		
		On \(\rho=R_{\mathrm{in}}(t)\), the limiting phases belong to
		the compact set \(\{-L_0+s^\infty(\omega):\omega\in\Sph\}\).
		Both \(\UU'\) and \(\UU''\) are strictly negative there, the
		latter by Lemma~\ref{lem:waveconcavity}. Lemma~\ref{lem:radialC1}
		therefore gives \(u_\rho\le-b_{\mathrm{rear}}\) and
		\(u_{\rho\rho}\le-a_{\mathrm{rear}}\) on this moving sphere, with
		\(a_{\mathrm{rear}},b_{\mathrm{rear}}>0\). Applying \eqref{eq:euclidean-polar-hessian} to
		\(f=u\), Lemma~\ref{lem:firstmixed} and
		{\eqref{eq:coreHess}} show that, for \(v\perp\omega\),
		\[
		D_x^2u[v,v]\le-\frac bt|v|^2,
		\qquad |D_x^2u[\omega,v]|\le\frac Ct|v|.
		\]
		The same Young inequality as in Proposition~\ref{prop:frontlog}
		absorbs the mixed block and proves \eqref{eq:rear-Hessian}.
		All constants depend on this single fixed \(L_0\), not on a level
		parameter.
	\end{proof}
	
	To propagate this boundary negativity throughout the enclosed ball, where
	\(u\) is uniformly close to the stable state \(1\), we construct a supersolution and 
	compare the Cartesian second derivatives of \(u\) with it.
	
	\begin{proposition}\label{prop:innerconcavity}
		For \(L_0\) as in Lemma~\ref{lem:rearboundary}, there are
		\(T_1>1\) and \(c_{\mathrm{in}}>0\) such that
		\begin{equation}\label{eq:inner-concavity}
			D_x^2u(t,x)\le
			-\frac{c_{\mathrm{in}}}{t}e^{-\beta_*t}\id
			\quad(t\ge T_1,\ x\in\overline{\mathcal D(t)}),
		\end{equation}
		where \(\beta_*=2(\sqrt2-1)\). The same upper bound holds for
		\(D_x^2\log u\) on this region.
	\end{proposition}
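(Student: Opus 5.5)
Fix a unit vector \(e\) and set \(h_e=u_{ee}\). Differentiating \eqref{eq:kpp} twice gives
\[
(h_e)_t-\Delta h_e+(2u-1)h_e=-2(u_e)^2\le0 .
\]
By \eqref{eq:inner-state}, \(2u-1\ge 9/10\) on \(\overline{\mathcal D(t)}\) for \(t\ge T\). So \(h_e\) is a subsolution of a linear equation with absorption coefficient at least \(9/10\) on the noncylindrical domain \(\{(t,x):t\ge T,\ |x|<R_{\mathrm{in}}(t)\}\). The plan is to compare \(h_e\) with an explicit supersolution \(\Psi(t,x)\) that is independent of \(e\) and satisfies \(\Psi\le-(c/t)e^{-\beta_*t}\) on the region. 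Here \(\Psi\) must dominate \(h_e\) at the initial time \(t=T\) and on the lateral boundary \(|x|=R_{\mathrm{in}}(t)\). On that boundary we have the data \(h_e\le-c_0/t\) from \eqref{eq:rear-Hessian}. At time \(T\) we only have \(h_e\le C\) from \eqref{eq:globalderivs}, with no sign.

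For the barrier I would try the form
\[
\Psi(t,x)=-\frac{\delta}{t}\,e^{-\beta_*t}\cosh\bigl(\mu\,x\cdot e'\bigr)\ \text{ or radial analogue } -\frac{\delta}{t}e^{-\beta_* t}\phi(|x|) ,\qquad \text{plus}\quad M e^{-\gamma(t-T)},
\]
with \(\gamma\in(0,9/10)\). The decaying positive term \(Me^{-\gamma(t-T)}\) handles the positive initial data, since it is a supersolution thanks to the absorption. The negative term must be a supersolution of \(\partial_t-\Delta+(2u-1)\) that is no larger than \(-c_0/t\) on the moving boundary. This is where \(\beta_*\) enters. I would use a radial profile \(-\delta t^{-1}e^{-\beta_*t}e^{\mu(|x|-R_{\mathrm{in}}(t))}\)-type exponential growing toward the boundary sphere, which moves at speed \(\approx2\). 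For \(Z=-e^{\lambda(|x|-2t)}\) we get \((\partial_t-\Delta+a)Z\ge0\) with \(a\le1\) (using \(2u-1\le1\) and \(-Z>0\)) provided \(-2\lambda-\lambda^2-\lambda(N-1)/|x|+a\ge\ldots\); with \(a\) between \(9/10\) and \(1\), the optimal balance is \(\lambda^2+2\lambda=1\). That gives \(\lambda=\sqrt2-1\), and the value at the center is \(e^{-2\lambda t}=e^{-\beta_*t}\). So \(\beta_*=2(\sqrt2-1)\) is exactly the decay rate of this exponential barrier, evaluated deep inside, relative to its boundary value. The factor \(1/t\) comes from matching the boundary data \(-c_0/t\). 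The \(t^{-1}\) factor and the curvature term \((N-1)/|x|\) produce lower-order errors, which I would absorb by slightly using the room between the coefficient \(9/10\) and \(1\). Near the origin, where \(|x|\) is not large, the radial term needs a smooth modification.

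The steps are as follows. First, verify the supersolution inequalities for the two terms. Second, check the boundary comparisons: on \(|x|=R_{\mathrm{in}}(t)\) the negative term equals about \(-\delta/t\), which is at least \(-c_0/t\) once \(\delta\le c_0/2\). The positive term \(Me^{-\gamma(t-T)}\) then spoils the boundary inequality for small \(t-T\). I would fix this by starting the comparison at \(T\) with the positive term already dominated, or by using the difference of the negative data \(-c_0/t\) and the decay \(e^{-\gamma t}\ll t^{-1}e^{-\beta_*t}\)? The latter fails, since \(\gamma<1\) while \(\beta_*\approx0.83\), so I would choose \(\gamma\in(\beta_*,9/10)\). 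The boundary inequality then needs \(-c_0/t+Me^{-\gamma(t-T)}\le\) barrier. I handle this by comparing \(h_e\) with \(\Psi_1+\Psi_2\), where \(\Psi_2=Me^{-\gamma(t-T)}\) is nonnegative, so the boundary inequality \(h_e\le-c_0/t\le\Psi_1\le\Psi_1+\Psi_2\) holds trivially. Third, apply the comparison principle on the bounded noncylindrical domain. Fourth, conclude that \(h_e\le-\delta t^{-1}e^{-\beta_*t}+Me^{-\gamma(t-T)}\le-(\delta/2)t^{-1}e^{-\beta_*t}\) for large \(t\), since \(\gamma>\beta_*\). This bound is uniform in \(e\), which gives \eqref{eq:inner-concavity}.

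The log-Hessian statement follows from \(D^2\log u=D^2u/u-\nabla u\otimes\nabla u/u^2\le D^2u/u\) together with \(u\le1\) for large \(t\). Indeed, \(D^2u\le-\epsilon\,\id\) with \(u\in(0,1)\) gives \(D^2u/u\le-\epsilon\,\id\). The main obstacle I expect is the precise verification that the radial exponential barrier is a supersolution throughout the whole ball, including near the origin and with the time-dependent radius \(R_{\mathrm{in}}\). The margin between \(\lambda^2+2\lambda\) and the absorption coefficient is tight there, and I would use the cushion \(u\ge19/20\) to make it work.
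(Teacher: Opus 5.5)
Your proposal is correct and is essentially the paper's proof. The paper compares $u_{ee}$ with $Me^{-\gamma(t-T)}-\frac{A}{t}e^{-\beta_*t}\Psi(x)$, where $\Psi=\frac1N\sum_j\cosh(\mu x_j)$, $\mu=\sqrt2-1$ and $\gamma=9/10$. Your single-direction $\cosh(\mu\,x\cdot e')$ (or $e^{\mu|x|}$, or $\cosh(\mu|x|)$) works equally well, since all that is used is $\Delta\Psi\ge\mu^2\Psi$ and $1\le\Psi\le e^{\mu|x|}$ together with $R_{\mathrm{in}}(t)\le2t$.

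The obstacle you anticipate does not arise. With $\mu^2+2\mu=1$ and $2u-1\le1$, the $-1/t$ and $-\mu(N-1)/|x|$ terms already have the favorable sign. The lower bound $u\ge19/20$ is needed only for the decaying term $Me^{-\gamma(t-T)}$ with $\gamma\in(\beta_*,9/10]$. Also, your intermediate profile $e^{-\beta_*t}e^{\mu(|x|-R_{\mathrm{in}}(t))}$ double-counts the decay and should be $e^{\mu(|x|-2t)}$, as in your subsequent computation.
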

	
	\begin{proof}
		Let \(T\) and \(c_0\) be supplied by
		Lemma~\ref{lem:rearboundary}, with \(T\) enlarged
		so that \(T\ge T_*\).
		For a fixed unit vector \(e\in\R^N\), write \(h_e=u_{ee}\).
		Two Cartesian differentiations of \eqref{eq:kpp} give
		\begin{equation}\label{eq:cartesian-second-equation}
			(h_e)_t-\Delta h_e+(2u-1)h_e=-2(u_e)^2\le0.
		\end{equation}
		Set \(\mathcal L_{\mathrm{in}}
		=\partial_t-\Delta+(2u-1)\).
		In the moving inner ball, \eqref{eq:inner-state}
		implies \(9/10\le2u-1\le1\).
		
		We now construct a supersolution for \(h_e\).
		Choose \(\mu=\sqrt2-1\), \(\beta_*=2\mu\),
		and \(\gamma=9/10\), so that
		\(\mu^2+\beta_*=1\) and
		\(0<\beta_*<\gamma<1\).
		Define the smooth positive function
		\[
		\Psi(x)=\frac1N\sum_{j=1}^N\cosh(\mu x_j).
		\]
		Then \(\Delta\Psi=\mu^2\Psi\) and
		\(1\le\Psi(x)\le e^{\mu|x|}\) for \(x\in\R^N\).
		For constants \(A,M>0\) to be chosen, put
		\begin{equation}\label{eq:inner-barrier}
			w(t,x)=\frac At e^{-\beta_*t}\Psi(x),\qquad
			p(t)=M e^{-\gamma(t-T)},\qquad
			Z(t,x)=p(t)-w(t,x).
		\end{equation}
		Using the preceding parameter relations and the properties of \(\Psi\),
		\begin{equation}\label{eq:inner-barrier-sign}
			\begin{aligned}
				\mathcal L_{\mathrm{in}}w
				&=\left(-\beta_*-\frac1t-\mu^2+2u-1\right)w
				=\left(2u-2-\frac1t\right)w\le-\frac wt,\\
				\mathcal L_{\mathrm{in}}p&=(2u-1-\gamma)p\ge0.
			\end{aligned}
		\end{equation}
		In particular \(\mathcal L_{\mathrm{in}}Z\ge0\).
		
		Since \(R_{\mathrm{in}}(t)\le2t\) and
		\(\beta_*=2\mu\),
		\begin{equation}\label{eq:inner-w-bounds}
			\frac At e^{-\beta_*t}\le w(t,x)\le\frac At
			\qquad(t\ge T,\ x\in\overline{\mathcal D(t)}).
		\end{equation}
		Choose \(0<A\le c_0\).
		On the moving lateral boundary,
		\eqref{eq:rear-Hessian} gives
		\(h_e\le-c_0/t\le-w\le Z\), uniformly in \(e\).
		Let \(C_2\ge\sup_x\|D_x^2u(T,x)\|_{\mathrm{op}}\)
		and take \(M>C_2+A/T\). Then
		\(h_e(T,x)\le C_2<M-w(T,x)=Z(T,x)\)
		on \(\overline{\mathcal D(T)}\), again uniformly in \(e\).
		
		For any finite \(S>T\), apply the classical
		scalar comparison principle on
		\(Q_{T,S}=\{(t,x):T<t<S,\ |x|<R_{\mathrm{in}}(t)\}\).
		For clarity, the change \(x=R_{\mathrm{in}}(t)y\)
		maps this domain to the fixed unit ball
		and changes the operator to
		\[
		\partial_t-R_{\mathrm{in}}(t)^{-2}\Delta_y
		-\frac{R_{\mathrm{in}}'(t)}{R_{\mathrm{in}}(t)}
		y\cdot\nabla_y
		+(2u(t,R_{\mathrm{in}}(t)y)-1).
		\]
		It is uniformly parabolic on every such finite
		cylinder and its coefficients are bounded there.
		Equations \eqref{eq:cartesian-second-equation}
		and \eqref{eq:inner-barrier-sign}, together with
		the initial and lateral data, therefore imply
		\(h_e\le Z\).
		Neither the comparison function nor \(A,M,T\)
		depends on \(S\) or \(e\).
		Letting \(S\) increase and using all unit vectors gives
		\[
		D_x^2u(t,x)\le(p(t)-w(t,x))\id
		\quad(t\ge T,\ x\in\overline{\mathcal D(t)}).
		\]

		Finally, the lower bound in \eqref{eq:inner-w-bounds} yields
		\begin{equation}\label{eq:inner-ratio}
			\sup_{x\in\overline{\mathcal D(t)}}\frac{p(t)}{w(t,x)}
			\le\frac MA\,t\,e^{\gamma T}
			e^{-(\gamma-\beta_*)t}\longrightarrow0.
		\end{equation}
		Choose \(T_1\ge T\) so that this ratio is at most \(1/2\)
		for every \(t\ge T_1\). Then
		\[
		D_x^2u(t,x)\le-\frac12w(t,x)\id
		\le-\frac A{2t}e^{-\beta_*t}\id.
		\]
		This is \eqref{eq:inner-concavity} with \(c_{\mathrm{in}}=A/2\).
		Since \(0<u<1\),
		\begin{equation}\label{eq:inner-log-concavity}
			D_x^2\log u
			=\frac{D_x^2u}{u}
			-\frac{\nabla_xu\otimes\nabla_xu}{u^2}
			\le-\frac{c_{\mathrm{in}}}{t}e^{-\beta_*t}\id
		\end{equation}
		on the same region.
	\end{proof}

	\section{The logarithmic Hessian through the diffusive scale}
	\label{cont:sec:through-diffusive}
	
	We now extend the fixed-front logarithmic-Hessian estimate into the leading edge, up to the diffusive scale.
	
	\begin{proposition}[Logarithmic Hessian bound through the diffusive scale]
		\label{cont:prop:through-diffusive}
		For every fixed \(K>0\), there are
		\(L_K,T_K,c_K>0\) such that
		\begin{equation}\label{cont:eq:through-diffusive-hessian}
			D_x^2\log u(t,x)\le-\frac{c_K}{t}\id
		\end{equation}
		whenever \(t\ge T_K\) and
		\(L_K\le |x|-R_0(t)\le K\sqrt t\).
	\end{proposition}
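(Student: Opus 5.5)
We will work in the moving-frame variables \(r=\rho-\RR(t)\), \(u=e^{-r}v\), and split the Hessian of \(q=\log u=-r+\log v\) via the polar identities \eqref{eq:euclidean-polar-hessian}. Since \(\rho=2t+O(\sqrt t)\) in the region \(L_K\le r\le K\sqrt t\), the task reduces to three pieces: a radial bound \(q_{\rho\rho}\le -c/t\), a tangential bound \(D_x^2q[v,v]\le -(b/t)|v|^2\), and a mixed bound \(|D_x^2q[\omega,v]|=o(t^{-1/2})\cdot t^{-1/2}|v|\) (more precisely, small enough to be absorbed by Young's inequality against the product of the radial and tangential negativity, which are both of order \(1/t\)). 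Because all three blocks are now of the same \(1/t\) scale, unlike in Proposition~\ref{prop:frontlog}, the mixed term must be genuinely smaller than \(1/t\).

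The first step is two-sided nondegeneracy \(c\,r\le v\le C\,r\) on \(L_K\le r\le K\sqrt t\), uniformly in \(\omega\), proven by barriers for \eqref{eq:v}: upper bounds come from \(W\lesssim\) the supersolution \(\Phi\) of Lemma~\ref{lem:barrier} (so \(v\lesssim\sqrt t\,e^{-\xi^2/8}\Phi\lesssim 1+r+\log t\)), and the lower bound from a subsolution of the form \(\zeta(\tau)\phi_0(\xi)\) minus a correction, matched with the fixed-front asymptotics \eqref{eq:rrm} at \(r=L_K\) (where \(v\approx e^{r}\UU(r+s^\infty)\asymp r\) since \(\UU(z)\sim z e^{-z}\)). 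With \(v\asymp r\) and \(u\asymp re^{-r}\), the tangential block follows: \(\HS q=\HS u/u-d_Su\otimes d_Su/u^2\le \HS u/u\), and Proposition~\ref{prop:coreHess}(i) gives \(\HS u/u\le C(1+r+\log t)/r\le C\log t\), hence \(\rho^{-2}\HS q\le C\log t/t^2\), while \(\rho^{-1}q_\rho\le -(1-o(1))/(2t)\) once we know \(q_\rho=-1+v_r/v\le -1/2\). The mixed block needs \(|\nS u_\rho|/u\) and \(|\nS u|\,|u_\rho|/u^2\) bounded by \(o(\sqrt t)\); I would get \(|\nS v|+|\nS v_r|\lesssim v\) (relative first angular estimates) by running the scalar comparison of Lemma~\ref{lem:secondW} for \(X_AW\) with \(\Phi\) (as in RRM Proposition~6) together with local parabolic regularity in boxes of size one, and then \(|D_x^2q[\omega,v]|\le C/t\) times a factor \(O(1)\), which suffices after noting radial negativity is of order at least \(1/r^2\ge 1/(K^2t)\) — so I need the constants compared carefully; if too tight, I will shrink the mixed bound using the self-similar limit.

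The radial block \(q_{\rho\rho}=\partial_r^2\log v\) is the main obstacle. Here I would argue by contradiction with blow-up: suppose \((t_n,r_n,\omega_n)\) with \(t_n\,\partial_r^2\log v\ge -1/n\). If \(r_n/\sqrt{t_n}\to0\) with \(r_n\to\infty\), rescale \(V_n(s,y,\theta)=v(t_n+r_n^2s,r_n y,\cdot)/v(t_n,r_n,\omega_n)\); by \(v\asymp r\) and parabolic estimates \(V_n\) converges in \(C^2_{\rm loc}\) to a positive caloric function on a half-space of size \(\asymp y\) in the normal direction, bounded between multiples of \(y\), which by rigidity (Liouville/boundary Harnack) is \(c\,y\); then \(r_n^2\partial_r^2\log v\to -1\) giving \(\partial_r^2\log v\le -1/(2r_n^2)\le -1/(2K^2 t_n)\), a contradiction. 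If \(r_n/\sqrt{t_n}\to\eta_0\in(0,K]\), the self-similar asymptotics of RRM Theorem~2.1, upgraded to \(C^2_{\rm loc}\) by interior regularity, give \(t\,\partial_r^2\log v\to(\log(\eta e^{-\eta^2/4}))''=-\eta^{-2}-1/2<0\), uniformly in \(\omega\) since \(\mathfrak c>0\) only enters additively in the log. If \(r_n\) stays bounded, Proposition~\ref{prop:frontlog} applies directly. Combining the three cases yields \(\partial_r^2\log v\le -c/t\) and also \(v_r/v\le C/r\), whence \(q_\rho\le-1/2\) for \(L_K\) large; then the Young-inequality absorption of Proposition~\ref{prop:frontlog} completes \eqref{cont:eq:through-diffusive-hessian}.
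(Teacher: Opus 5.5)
Your architecture is the paper's: nondegeneracy \(v\asymp r\), the tangential block from Proposition~\ref{prop:coreHess}(i), the radial block by subdiffusive caloric rigidity and the diffusive self-similar blow-up, then Young absorption. Your radial blow-up is essentially Lemma~\ref{cont:lem:radial-curvature}. The genuine gap is the mixed block.

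Interior regularity in boxes of size one only gives \(|\partial_rX_Av|\lesssim\sup|X_Av|\), which is of order \(r\). Hence \(|d_Sq_\rho|\lesssim1\) and \(|D_x^2q[\omega,e]|\lesssim|e|/t\). Absorbing this against radial negativity \(1/(2r^2)\) and tangential negativity \(c/t\) requires \((C/t)^2\le\varepsilon\,r^{-2}t^{-1}\), i.e.\ \(r\le c'\sqrt t\). So it fails on \(r\le K\sqrt t\) once \(K\) is large, and it is critical everywhere if you only retain \(q_{\rho\rho}\le-c/t\). The correct requirement is \(|d_Sq_\rho|\le\varepsilon\sqrt t/r\), not \(o(\sqrt t)\).

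Your fallback ``via the self-similar limit'' is not an argument as stated. Since \(\mathfrak c(\omega)\) is only Lipschitz, the limit profile cannot be differentiated in \(\omega\). It can be rescued by rescaling angles by \(\sqrt t/\rho\), so that the blow-up limit is independent of the rescaled angle and \(C^2_{\rm loc}\) convergence forces \(d_Sq_\rho\to0\) along diffusive sequences.

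The paper's idea is more direct: do interior regularity at scale \(r\), not scale \(1\). Take cylinders \(t=t_0+r_0^2s\), \(r=r_0(1+y_1)\), with angular radius \(r_0/\rho_0\). There the equations for \(V=r_0^{-1}v\) and \(P=r_0^{-1}X_Av\) are uniformly parabolic with bounded coefficients, because \(r_0^2|b_{\rm rad}|\le C_K\) when \(r_0\le K\sqrt t\) and \(r_0^2u\le C\). Moreover \(|V|+|P|\le C_K\) by Lemma~\ref{cont:lem:upper-leading-edge}, discussed below. This yields the \emph{absolute} bound \(|\partial_rv|+|\partial_rX_Av|\le C_K\). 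Hence \(|\partial_rX_Aq|\le C_K/r\) and \(|D_x^2q[\omega,e]|\le C_K|e|/(tr)\), a full factor \(r\) better. Then \(\frac{2C_K}{tr}|b||e|\le\frac{b^2}{4r^2}+\frac{4C_K^2}{t^2}|e|^2\) closes for every \(K\) (Lemma~\ref{cont:lem:angular-relative}).

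Second, the upper half of \(v\asymp r\) does not follow from the barrier you cite. The log-shifted \(\Phi\) gives only \(v\le C(1+r+\log t)\), which is not \(\lesssim r\) when \(r\lesssim\log t\). The same loss affects \(|X_Av|\), so \(|\nS v|\lesssim v\) also fails there. Your subdiffusive blow-up needs \(V_n\le Cz_1\) uniformly: for compactness up to \(z_1=0\), for vanishing boundary values, and for linear growth in the Liouville step. This breaks down along sequences with \(1\ll r_n\lesssim\log t_n\).

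The logarithmic shift was needed only for \(X_A^2u\), which is \(O(t)\) at \(r=0\). For \(v\) and \(X_Av\) you can place the boundary at \(r=0\), where \(v=u\le K_0\) and \(|X_Av|\le C\), and rerun the construction in \(\eta=r/\sqrt t\). This is Lemma~\ref{cont:lem:upper-leading-edge}, giving \(v+|X_Av|\le C(1+r)e^{-r^2/(8t)}\).

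For the lower bound, your self-similar subsolution is left unspecified. The paper instead uses a time-independent radial subsolution \(c\,\psi(r)\), with \(\psi''=C(1+r)e^{-r}\psi\), on the moving cylinder \(L_*<r<K\sqrt t\). Its boundary data come from \eqref{eq:rrm} and \eqref{cont:eq:rrm-diffusive}.
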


	We begin by recording two inputs for the proof. First, recalling the conjugation \(v(t,r,\omega)=e^r u(t,R_0(t)+r,\omega)\) introduced in Section~\ref{Transformed_equation}, the rotation estimate \eqref{eq:globalRotation} gives, for every real skew-symmetric matrix \(A\) with \(\|A\|_{\rm op}\le1\),
	\begin{equation}\label{cont:eq:second-rotation-input}
		X_A^2v\le C(1+r+\log t)
		\qquad (r \ge 0)
	\end{equation}
	for all sufficiently large \(t\).

	Next, we explain how the diffusive-zone asymptotics of Roquejoffre, Rossi and Roussier-Michon apply under \eqref{eq:data}.
	Let \(z\) solve \(z_t=\Delta z+z-z^2\) with
	\(z(1,x)=K_0^{-1}u_0(x)\).
	The general-data remark of \citet{RRM2019}
	allows us to apply their Proposition~5 to \(z\).
	Since \(z\le u\le K_0z\), its weighted upper and
	positive lower bounds transfer to \(u\).
	The first angular estimate for \(u\) follows
	from the Kato argument of
	\cite[Proposition~6]{RRM2019}, applied to its
	differentiated equation.
	The compactness and half-line spectral argument
	in the proof of \cite[Theorem~2.1]{RRM2019}
	then implies that
	there is a positive Lipschitz
	function \(\mathfrak c\) on \(\Sph\) such that
	\begin{equation}\label{cont:eq:rrm-diffusive}
		t^{-1/2}v(t,\sqrt t\,\eta,\omega)
		\longrightarrow
		\mathfrak c(\omega)\eta e^{-\eta^2/4}
	\end{equation}
	uniformly for \(\eta\) in compact subsets of \((0,\infty)\),
	uniformly in \(\omega\).

	To prove Proposition~\ref{cont:prop:through-diffusive}, we first establish two-sided bounds for \(v\) from a fixed distance ahead of the front up to the diffusive scale. Together with the rotation estimate above, these bounds allow us to derive the angular and mixed relative estimates. Radial logarithmic concavity is obtained separately through subdiffusive and diffusive blow-up arguments.

	\subsection{Leading-edge bounds and nondegeneracy}
	\label{cont:sec:leading-edge}
	
	We first adapt the scalar supersolution construction of Section 3 to obtain a Gaussian upper bound for \(v\) and a corresponding absolute bound for its first angular derivatives.
	
	\begin{lemma}[Gaussian upper bounds in the leading edge]
		\label{cont:lem:upper-leading-edge}
		There are \(T_{\rm le},C>0\) such that, for
		\(t\ge T_{\rm le}\), \(r\ge0\), and every admissible \(A\),
		\begin{equation}\label{cont:eq:upper-conjugate}
			v(t,r,\omega)
			+|X_Av(t,r,\omega)|
			\le C(1+r)e^{-r^2/(8t)}.
		\end{equation}
		Consequently,
		\begin{equation}\label{cont:eq:upper-solution}
			u(t,(R_0(t)+r)\omega)
			\le C(1+r)e^{-r-r^2/(8t)}
			\qquad(r\ge0).
		\end{equation}
	\end{lemma}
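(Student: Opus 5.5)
The plan is to redo the comparison of Section~\ref{sec:angular} directly for \(W\) and for \(X_AW\), instead of for \(X_A^2W\). We then translate the result back through \eqref{eq:transform}. In physical variables one has \(u=\sqrt t\,e^{-r}e^{-\xi^2/8}W\), so \(v=e^ru=\sqrt t\,e^{-\xi^2/8}W\). A bound \(|W|+|X_AW|\le C\Phi\), combined with \(\Phi\le C(e^{-\tau/2}+\xi)\) (shown in the proof of Proposition~\ref{prop:coreHess}) and the Gaussian bound \(\Phi\le Ce^{-\xi^2/16}\) from \eqref{eq:barrierP}, gives
\[
v+|X_Av|\le C\sqrt t\,e^{-\xi^2/8}\min\{e^{-\tau/2}+\xi,\;e^{-\xi^2/16}\}.
\]
For \(r\ge0\) we have \(\xi\ge r/\sqrt t\) and \(\sqrt t\,\xi=r+3\log t\). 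Hence \(\sqrt t(e^{-\tau/2}+\xi)\le C(1+r+\log t)\).

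The logarithmic loss must be removed. The cleanest route is to use \(\xi\le 2r/\sqrt t\) when \(r\ge3\log t\). When \(0\le r\le 3\log t\), we instead use the a priori bound \(v=e^ru\le e^r K_0\). That is only polynomial in \(t\), so it is not enough alone. I would therefore first try to refine the argument. Since \(e^{-\xi^2/8}\le Ce^{-r^2/(8t)}\) up to harmless factors, the remaining issue is only the prefactor \((1+r)\) versus \((1+r+\log t)\) in the strip \(0\le r\le 3\log t\). There I would invoke the weighted upper bound of \cite[Proposition~5]{RRM2019}, transferred to \(u\) through \(z\le u\le K_0z\) as explained above. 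That result gives \(v\le C(1+r_+)\) in the fixed-front-to-leading-edge range. For \(X_Av\), the Kato argument of \cite[Proposition~6]{RRM2019} gives the same bound.

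Concretely, the steps are as follows. First, \(W\) satisfies \(\PP W=-BW^2\le0\). Also \(X_AW\) satisfies \((\PP+2BW)X_AW=0\), so \(|X_AW|\) is a subsolution in the Kato sense. Second, the boundary data at \(\xi=0\) are controlled by the Remark after \eqref{eq:lbounds}: \(W\le K_0t^{-7/2}\) and \(|X_AW|\le Ct^{-5/2}\). This uses \(|X_Au|\le C\) from Lemma~\ref{lem:firstmixed}. Both are much smaller than \(\Phi(\tau,0)\ge ce^{-\tau/2}\). Third, the initial data and the tails are handled by Lemma~\ref{lem:tails}, now for \(u\) and first derivatives, which is an easier version of the same heat-kernel bound. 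Fourth, comparison on exhausting cylinders, exactly as in Lemma~\ref{lem:secondW}, yields \(W+|X_AW|\le C_*\Phi\). Fifth, we convert to \((1+r)e^{-r^2/(8t)}\). For \(r\ge\log t\) this is immediate, since \(r+3\log t\le4r\). For \(0\le r\le\log t\) it comes from the \cite{RRM2019} bound.

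Finally, \eqref{cont:eq:upper-solution} follows by multiplying by \(e^{-r}\). The main obstacle I expect is exactly this \(\log t\) loss near \(r=0\) from the logarithmic rear boundary. If the RRM transfer is not clean, the fallback is a second comparison with rear boundary at \(r=0\). There \(v\le C\) holds by \eqref{eq:rrm} and the exponential decay of \(\UU\), and the boundary value \(v=O(1)\) matches \(\Phi(\tau,0)\sim e^{-\tau/2}\) after the factor \(\sqrt t\), giving the sharp \((1+r)\) prefactor.
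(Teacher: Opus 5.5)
Your fallback is exactly the paper's proof, and it should be the argument, not a contingency. The paper removes the logarithmic shift: with $\eta=r/\sqrt t$ and $W^0=t^{-1/2}e^{\eta^2/8}v$, the unshifted coefficients obey $|l_1^0|\le Ce^{-\tau/2}$ and $|l_2^0|\le C(1+\eta)e^{-\tau/2}$. These bounds are all that the three-profile construction of Lemma~\ref{lem:barrier} and Appendix~\ref{app:barriercheck} needs, so one gets $\Phi_{\rm le}$ with $\Phi_{\rm le}(\tau,0)\ge ct^{-1/2}$.

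At $\eta=0$ one has $W^0+|X_AW^0|=t^{-1/2}(u+|X_Au|)\le Ct^{-1/2}$, from $u\le K_0$ and Lemma~\ref{lem:firstmixed}. You do not need \eqref{eq:rrm} here. You do need the bound on $X_Au$, which your fallback sentence omits. Comparison for $W^0$ and for $\pm X_AW^0$ then gives $W^0+|X_AW^0|\le C\Phi_{\rm le}$. The conversion has no logarithmic loss: for $\eta\le a_0/2$ one gets $\sqrt t\,(t^{-1/2}+\eta)=1+r$, and for $\eta\ge a_0/2$ the Gaussian bound together with $\sqrt t\le 2r/a_0$ gives $C r\,e^{-3r^2/(16t)}$.

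Your primary route is unnecessary and, as written, not closed. The comparison with rear boundary $r=-3\log t$ is fine and yields $v+|X_Av|\le C(1+r+\log t)e^{-r^2/(8t)}$ for $r\ge0$. The patch on $0\le r\le\log t$, however, rests on the unverified claim that \citet[Propositions~5 and~6]{RRM2019} give $v+|X_Av|\le C(1+r_+)$ all the way down to $r=0$. Their barriers are attached to the rear boundary $r=-t^\delta$, so a bound of the same form translates back with prefactor $1+r+t^\delta$. That is worse than your own $\log t$ loss. The paper invokes their Proposition~5 only for weighted bounds feeding the compactness argument behind \eqref{cont:eq:rrm-diffusive}. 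Present the Dirichlet comparison at $r=0$ as the proof and drop the logarithmic-boundary step.
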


	\begin{proof}
		
		Set $\tau=\log t$, $\eta=r/\sqrt t$, and
		$W^0=t^{-1/2}e^{\eta^2/8}v$.
		Repeating the calculation of
		Section~\ref{Transformed_equation}
		without the logarithmic shift gives
		\[
		\mathcal P_0W^0=-B_0(W^0)^2,
		\qquad
		\mathcal P_0
		=\partial_\tau+\mathcal M
		-l_1^0\partial_\eta-l_2^0-\alpha_0\DS,
		\]
		where $\mathcal M=-\partial_{\eta\eta}
		+\eta^2/16-3/4$ and $B_0,\alpha_0>0$.
		With \(D=\rho\) and
		\(h=(N-1)e^\tau/D-k\) as in Section~\ref{Transformed_equation},
		we have
		\[
		l_1^0=he^{-\tau/2},\qquad
		l_2^0=-\frac32-h-\frac{\eta}{4}l_1^0.
		\]
		Consequently, for all sufficiently large $\tau$,
		$|l_1^0|\le Ce^{-\tau/2}$ and
		$|l_2^0|\le C(1+\eta)e^{-\tau/2}$.
		These are precisely the coefficient bounds needed in the
		three-profile construction of Lemma~\ref{lem:barrier}
		and Appendix~\ref{app:barriercheck}.
		Using the same amplitude system \eqref{eq:ODE},
		we therefore obtain a positive function
		\(\Phi_{\rm le}\) satisfying
		\[
		\mathcal P_0\Phi_{\rm le}\ge0,\qquad
		\Phi_{\rm le}(\tau,0)\ge ct^{-1/2},\qquad
		\Phi_{\rm le}\le Ce^{-\eta^2/16},
		\]
		and
		\[
		\Phi_{\rm le}\le C(t^{-1/2}+\eta)
		\quad(0\le\eta\le a_0/2),\qquad
		\Phi_{\rm le}\ge ct^{-1/2}e^{-\eta^2/16}
		\quad(\eta\ge2).
		\]

		At $r=0$, the bound $0<u\le K_0$ and
		Lemma~\ref{lem:firstmixed} give
		$W^0+|X_AW^0|\le Ct^{-1/2}$, uniformly in
		the admissible $A$.
		Moreover,
		\[
		\mathcal P_0W^0=-B_0(W^0)^2\le0,\qquad
		(\mathcal P_0+2B_0W^0)X_AW^0=0.
		\]
		The finite-time Gaussian tails of
		Lemma~\ref{lem:tails} control the initial data
		and justify comparison at spatial infinity.
		Together with
		\(\Phi_{\rm le}(\tau,0)\ge ct^{-1/2}\),
		these estimates allow us to choose a fixed
		comparison constant, independent of \(A\)
		and the terminal time.
		Applying the bounded-cylinder comparison and
		exhaustion argument of Lemma~\ref{lem:secondW}
		to \(W^0\) and separately to \(\pm X_AW^0\),
		we obtain
		\[
		W^0+|X_AW^0|\le C\Phi_{\rm le}
		\qquad(\tau\ge\log T_{\rm le},\ \eta\ge0).
		\]

		For \(0\le\eta\le a_0/2\), the near-boundary
		bound for \(\Phi_{\rm le}\) gives
		\(v+|X_Av|
		\le C(1+r)e^{-r^2/(8t)}\).
		For \(\eta\ge a_0/2\), its Gaussian upper bound
		and \(\sqrt t\le2r/a_0\) give the same estimate.
		This proves \eqref{cont:eq:upper-conjugate}.
		Finally,
		\(u=e^{-r}v\) gives \eqref{cont:eq:upper-solution}.
	\end{proof}

	We next use a radial subsolution to connect the positive lower bounds at the fixed-front and diffusive scales.
	
	\begin{lemma}[Linear lower bound through the diffusive scale]
		\label{cont:lem:lower-leading-edge}
		There exist \(L_*,C>0\) such that, for every fixed \(K>0\),
		there are \(T_K,c_K>0\) for which
		\begin{equation}\label{cont:eq:linear-leading-edge}
			c_K r
			\le v(t,r,\omega)
			\le C(1+r)
		\end{equation}
		whenever \(t\ge T_K\) and
		\(L_*\le r\le K\sqrt t\).
	\end{lemma}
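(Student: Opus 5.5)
The upper bound in \eqref{cont:eq:linear-leading-edge} is immediate from Lemma~\ref{cont:lem:upper-leading-edge}: \eqref{cont:eq:upper-conjugate} gives \(v\le C(1+r)e^{-r^2/(8t)}\le C(1+r)\) for all \(r\ge0\) and \(t\ge T_{\rm le}\). The lower bound is the real content. I will prove it with a radial subsolution comparison for \eqref{eq:v} on a moving half-line \(r\ge L_*\). The inner boundary data come from the fixed-front asymptotics, and the diffusive profile \eqref{cont:eq:rrm-diffusive} supplies positivity at the outer scale.

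\textbf{Inner boundary.} By \eqref{eq:rrm} and the boundedness of \(s^\infty\), for a fixed large \(L_*\) we have \(v(t,L_*,\omega)=e^{L_*}u\ge c_*>0\) uniformly in \(\omega\) for \(t\) large. Indeed, \(\UU(L_*+s^\infty)\) is bounded below by a positive constant, and the error \(o(1)\) can be made small relative to it.

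\textbf{Outer boundary.} Since the limit in \eqref{cont:eq:rrm-diffusive} is uniform on compact subsets of \((0,\infty)\) and \(\mathfrak c\) is positive and continuous, we get \(v\ge c\sqrt t\,\eta\) on \(\eta\in[\delta,2K]\) for \(t\ge T_{K,\delta}\). This already yields the lower bound \(v\ge c_K r\) for \(\delta\sqrt t\le r\le K\sqrt t\).

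\textbf{Subdiffusive range.} It remains to treat \(L_*\le r\le\delta\sqrt t\), with \(\delta\) small but fixed. Since the coefficients of \eqref{eq:v} are angle-independent and \(\DS\) annihilates radial functions, a radial subsolution \(\underline v(t,r)\) suffices. I take a rescaled self-similar profile
\[
\underline v(t,r)=\varepsilon\,(r-L_*)\Bigl(1-\frac{(r-L_*)^2}{\delta^2 t}\Bigr)_+,
\]
or a smoothed variant of the form \(\varepsilon(r-L_*)e^{-(r-L_*)^2/(a t)}\) cut at \(r=L_*+\delta\sqrt t\), and compare it with \(v\) on the moving strip \(L_*\le r\le L_*+\delta\sqrt t\).

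For \(\underline v\) to be a subsolution, the drift term \(\bigl(\frac{N-1}{r+\RR}-\frac kt\bigr)(v_r-v)\) is \(O(1/t)\) times \(O(1+r)\). The quadratic term \(e^{-r}v^2\) is \(O(e^{-L_*}\varepsilon^2 r^2 e^{-r})\), which is tiny. The time derivative and the diffusion of the self-similar profile produce a negative margin of order \(\varepsilon r/t\); I may need the profile \(r e^{-r^2/(4t)}\cdot t^{-\theta}\)-type adjustments so that \(\underline v_t-\underline v_{rr}\) has the right sign. Controlling these perturbations against that margin is the main obstacle. The delicate terms are \(\frac kt(v_r-v)\sim kr/t\), which is comparable to the margin, and the fact that the profile must stay linear (with no loss in \(t\)) over the whole range.

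To handle this, I would start the comparison at a large time \(t_1\). The initial data are handled by the positivity \(v(t_1,\cdot)\ge c\) on the compact strip, and \(\varepsilon\) is chosen small relative to \(c_*\), to the value at \(t_1\), and to the outer-boundary lower bound. I would then absorb the \(k/t\) drift by replacing \(\underline v\) with \(\underline v\,e^{-C\int t^{-1}}\cdots\) only if harmless. If that produces a logarithmic loss, I would instead exploit that \(v_r-v<0\) for the subsolution near its zero set, or use the exact heat-type subsolution \(r\,G\) for the half-line Dirichlet problem in the shifted frame, in the spirit of \cite[Proposition~5]{RRM2019}.

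Once \(\underline v\le v\) holds on the strip, we get \(v\ge c\varepsilon(r-L_*)\ge c_K r\) for \(2L_*\le r\le\delta\sqrt t/2\). The intermediate range \([\delta\sqrt t/2,\delta\sqrt t]\) is covered by the outer bound with \(\delta\) replaced by \(\delta/2\). Finally, enlarging \(L_*\) to \(2L_*\) and using \(v\ge c_*\) near \(r=L_*\) (again via \eqref{eq:rrm}) closes the argument.
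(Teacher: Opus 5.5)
Your upper bound, the inner boundary value at $r=L_*$ from \eqref{eq:rrm}, and the diffusive-range lower bound from \eqref{cont:eq:rrm-diffusive} are all correct. The gap is the subdiffusive subsolution. You never actually produce one, and the profiles you write down are not subsolutions.

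Write the residual of a radial $\underline v$ as $\underline v_t-\underline v_{rr}-b_{\rm rad}(\underline v_r-\underline v)+u\underline v$, where $b_{\rm rad}=\frac{N-1}{\rho}-\frac kt=-\frac{3}{2t}+O(t^{-3/2})<0$ on $0\le r\le K\sqrt t$.
\begin{itemize}
\item Every profile you propose vanishes at $r=L_*$ with slope $\varepsilon$. As $r\downarrow L_*$, the heat part and the absorption tend to $0$, while the drift term tends to $\frac{3\varepsilon}{2t}>0$. So the subsolution inequality fails on a whole neighbourhood of the inner boundary.
\item For the cap $\varepsilon y(1-y^2/(\delta^2t))$ with $y=r-L_*$, it fails everywhere. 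Time derivative and diffusion contribute $\frac{\varepsilon y}{\delta^2 t}(6+y^2/t)>0$, which is not the negative margin you claim. This exceeds the drift gain, of order $\frac{3}{2t}\underline v\le\frac{3\varepsilon y}{2t}$, once $\delta<2$, so the cap is a strict supersolution.
\item For $\phi=ye^{-y^2/(4t)}$ one has exactly $\phi_t-\phi_{yy}=\frac{3}{2t}\phi$. The drift cancels only this, leaving a residual $\approx\frac{3\varepsilon}{2t}\partial_y(ye^{-y^2/(4t)})$, which is positive for $y<\sqrt{2t}$.
\end{itemize}
The point you are missing is that the $k/t$ drift is not a perturbation to be controlled. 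Because $b_{\rm rad}<0$, the term $b_{\rm rad}(\underline v_r-\underline v)$ is the only source of a favourable margin, and it is favourable exactly where $\underline v_r<\underline v$.

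Second, the absorption is not ``tiny'' relative to that margin. Near $r=L_*$, $u\underline v$ (or $e^{-r}\underline v^2$) is of order $\varepsilon r^2e^{-r}$ at fixed $r$, with no decay in $t$. Any drift margin is only $O(\varepsilon r/t)$, so on $L_*\le r\le\log t$ the absorption must be absorbed by convexity of the profile.

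The paper handles both points with one time-independent profile. It takes $\psi_{\rm le}''=b_{\rm low}\psi_{\rm le}$, with $b_{\rm low}=C(1+r)e^{-r}\ge u$ (from \eqref{cont:eq:upper-solution}), $\psi_{\rm le}(L_*)=L_*\ge3$ and $\psi_{\rm le}'(L_*)=1$. Gronwall gives $r\le\psi_{\rm le}\le2r$ and $1\le\psi_{\rm le}'\le2$, hence $\psi_{\rm le}'<\psi_{\rm le}$. The residual of $c\psi_{\rm le}$ is then $c\bigl[(u-b_{\rm low})\psi_{\rm le}-b_{\rm rad}(\psi_{\rm le}'-\psi_{\rm le})\bigr]\le0$.

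Because this profile is linear with no $t$-dependence, a single comparison on the moving cylinder $L_*<r<K\sqrt t$ suffices. At $r=K\sqrt t$ the bound $v\ge c'_K\sqrt t$ from \eqref{cont:eq:rrm-diffusive} dominates $c\psi_{\rm le}\le 2cK\sqrt t$ once $c$ is small. Your split at $\delta\sqrt t$ is harmless, but neither it nor any Gaussian cutoff is needed.
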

	
	\begin{proof}
		The equation for \(v\) is
		\begin{equation}\label{cont:eq:auxiliary-conjugate-pde}
			\begin{split}
				\partial_t v
				&=\partial_{rr}v
				+b_{\rm rad}(\partial_r v-v)
				+\rho^{-2}\DS v-u(t,\rho\omega)v,\\
				b_{\rm rad}(t,r)
				&=\frac{N-1}{R_0(t)+r}-\frac{k}{t},
				\qquad \rho=R_0(t)+r .
			\end{split}
		\end{equation}
		For all sufficiently large \(t\),
		\(b_{\rm rad}(t,r)<0\) for every \(r\ge0\).
		By \eqref{cont:eq:upper-solution}, after increasing the starting
		time if necessary,
		\(u(t,(R_0(t)+r)\omega)\le b_{\rm low}(r)\), where
		\(b_{\rm low}(r):=C(1+r)e^{-r}\).

		With these bounds, we construct a radial subsolution
		comparable to \(r\).
		Choose \(L_*\ge3\) so large that
		\(\int_{L_*}^{\infty}y b_{\rm low}(y)\,dy\le\log2\),
		and let \(\psi_{\rm le}\) solve
		\[
		\psi_{\rm le}''=b_{\rm low}\psi_{\rm le},
		\qquad
		\psi_{\rm le}(L_*)=L_*,
		\qquad
		\psi_{\rm le}'(L_*)=1.
		\]
		Since \(b_{\rm low}\ge0\), we have
		\(\psi_{\rm le}'(r)\ge1\) and \(\psi_{\rm le}(r)\ge r\).
		Moreover,
		\((r\psi_{\rm le}'-\psi_{\rm le})'
		=r\psi_{\rm le}''\ge0\), with
		\(r\psi_{\rm le}'-\psi_{\rm le}=0\) at \(r=L_*\).
		Thus \(\psi_{\rm le}\le r\psi_{\rm le}'\), and
		\[
		\psi_{\rm le}'(r)
		\le1+\int_{L_*}^r
		y b_{\rm low}(y)\psi_{\rm le}'(y)\,dy.
		\]
		Gronwall's inequality gives
		\(\psi_{\rm le}'\le2\), and hence
		\[
		r\le\psi_{\rm le}(r)\le2r,\qquad
		1\le\psi_{\rm le}'(r)\le2
		\quad(r\ge L_*).
		\]
		Since \(L_*\ge3\), we also have
		\(\psi_{\rm le}'-\psi_{\rm le}<0\).
		For any \(c_{\rm sub}>0\), the function
		\(c_{\rm sub}\psi_{\rm le}\) is independent of \(t\) and \(\omega\).
		Using \(\psi_{\rm le}''=b_{\rm low}\psi_{\rm le}\), its residual in
		\eqref{cont:eq:auxiliary-conjugate-pde} is
		\[
		c_{\rm sub}
		\bigl[
		(u-b_{\rm low})\psi_{\rm le}
		-b_{\rm rad}(\psi_{\rm le}'-\psi_{\rm le})
		\bigr]
		\le0.
		\]
		Thus \(c_{\rm sub}\psi_{\rm le}\) is a subsolution.
		
		At the two lateral boundaries,
		\eqref{eq:rrm} and \eqref{cont:eq:rrm-diffusive}
		give, uniformly in \(\omega\),
		\[
		\begin{aligned}
			v(t,L_*,\omega)
			&\longrightarrow
			e^{L_*}U_*(L_*+s^\infty(\omega)),\\
			t^{-1/2}v(t,K\sqrt t,\omega)
			&\longrightarrow
			\mathfrak c(\omega)K e^{-K^2/4}.
		\end{aligned}
		\]
		Hence
		$v(t,L_*,\omega)$ and
		$t^{-1/2}v(t,K\sqrt t,\omega)$ have uniform positive lower bounds for all sufficiently large
		$t$ and all $\omega$.
		Choose $T_K$ so that these bounds hold for
		$t\ge T_K$ and $K\sqrt{T_K}>L_*$.
		Taking \(c_{\rm sub}>0\) sufficiently small,
		we may arrange \(c_{\rm sub}\psi_{\rm le}\le v\)
		on both lateral boundaries and on the initial interval
		\(L_*\le r\le K\sqrt{T_K}\).
		The comparison principle on the moving cylinder
		\(L_*<r<K\sqrt t\), \(t\ge T_K\), then yields
		\(v(t,r,\omega)\ge c_{\rm sub}\psi_{\rm le}(r)
		\ge c_{\rm sub}r\).
		The upper bound in \eqref{cont:eq:linear-leading-edge}
		follows from Lemma~\ref{cont:lem:upper-leading-edge}.
	\end{proof}

	\subsection{Angular and mixed relative estimates}
	
	The bounds above provide the required nondegeneracy.
	They allow us to pass from estimates for \(v\) to relative estimates
	for \(q=\log u\).
	
	\begin{lemma}[Angular and mixed relative estimates]
		\label{cont:lem:angular-relative}
		For every fixed \(K>0\), there are \(T_K,C_K>0\) such that for \(t \ge T_K\),
		\begin{equation}\label{cont:eq:relative-angular}
			|X_Aq|\le C_K,\qquad
			|\partial_rX_Aq|\le\frac{C_K}{r},
			\qquad
			\HS q\le
			C_K\left(1+\frac{\log t}{r}\right)\gS
		\end{equation}
		on \(2L_*\le r\le K\sqrt t\).
		For every physical tangent vector \(e\perp\omega\),
		\begin{equation}\label{cont:eq:relative-mixed}
			|D_x^2q[\omega,e]|
			\le\frac{C_K}{tr}|e|.
		\end{equation}
	\end{lemma}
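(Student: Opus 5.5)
The plan is to work with \(q=\log u=-r+\log v\). Since the prefactor \(e^{-r}\) is radial and \(X_A\) differentiates only in angular variables, \(X_Aq=X_Av/v\). Likewise \(X_A^2q=X_A^2v/v-(X_Av)^2/v^2\le X_A^2v/v\), and \(\partial_rX_Aq=\partial_rX_Av/v-X_Av\,\partial_rv/v^2\). All estimates are on \(2L_*\le r\le K\sqrt t\). There Lemma~\ref{cont:lem:lower-leading-edge} gives \(v\ge c_Kr\) and \(v\le C(1+r)\le Cr\). Lemma~\ref{cont:lem:upper-leading-edge} gives \(|X_Av|\le C(1+r)\). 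Combining these yields \(|X_Aq|\le C_K\). For the one-sided second angular bound, \eqref{cont:eq:second-rotation-input} gives \(X_A^2v\le C(1+r+\log t)\). Dividing by \(v\ge c_Kr\) gives \(X_A^2q\le C_K(1+\log t/r)\). Then choose \(A=v\otimes\omega-\omega\otimes v\) as in the proof of Proposition~\ref{prop:coreHess}(ii), so that \(X_A^2q=\HS q[v,v]\) at fixed \(\rho\); this proves the Hessian part of \eqref{cont:eq:relative-angular}.

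The mixed term needs derivative bounds for \(v\) of the right size: \(|\partial_rv|\le C\) and \(|\partial_rX_Av|\le C\), with no loss in \(r\). I would not use absolute Cartesian bounds, since \(\partial_r v=e^r(u+u_\rho)\) contains the factor \(e^r\). Instead, I would use interior parabolic estimates on unit cylinders for the conjugated equation \eqref{cont:eq:auxiliary-conjugate-pde} and for its \(X_A\)-derivative. That derivative equation has the same form, plus the term \(-X_Au\,v\), because \(X_A\) commutes with all the coefficients by \eqref{eq:commute}. On a unit cylinder centered at \((t,r,\omega)\) with \(r\ge2L_*\), the coefficients are bounded and smooth in the variables \((r,\omega)\). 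The angular diffusion there is \(\rho^{-2}\DS\), which is degenerate. I would therefore work in local Cartesian coordinates around \(x=\rho\omega\), where \(v(t,x)=e^{|x|-R_0(t)}u(t,x)\) and the operator is uniformly parabolic. On such a cylinder, \(v\) and \(X_Av\) are bounded by \(C(1+r)\) from Lemma~\ref{cont:lem:upper-leading-edge}. This naive route only gives \(|\partial_rv|\le C(1+r)\), which is too weak.

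This is the main obstacle. The fix I would try first is to apply the interior estimate to \(v-\ell\), where \(\ell\) is the affine function \(v(t,r_0,\omega_0)+\dots\), or alternatively to use the two-sided linear structure. A cleaner choice is the rescaled blow-up: on a cylinder of radius \(r/2\) in parabolic scaling, \(v/r\) is bounded above and below. Parabolic gradient estimates at scale \(r\) then give \(|\partial_rv|\le C\cdot r/r=C\) and \(|\partial_rX_Av|\le C\), provided the zeroth-order terms \(b_{\rm rad}\) and \(u\lesssim re^{-r}\) are small at that scale. They are, since \(b_{\rm rad}=O(1/t)\) and \(r^2\le K^2t\).

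With these bounds, \(|\partial_rX_Aq|\le C/r+C(1+r)\cdot C/r^2\le C_K/r\). The mixed Hessian then follows from \eqref{eq:euclidean-polar-hessian}: for \(e\perp\omega\), \(D_x^2q[\omega,e]=\rho^{-1}d_Sq_\rho[e]-\rho^{-2}d_Sq[e]\). Since \(\rho\asymp t\), this is bounded by \(C_K(tr)^{-1}|e|+C_Kt^{-2}|e|\le C_K(tr)^{-1}|e|\), using \(r\le K\sqrt t\le t\). This proves \eqref{cont:eq:relative-mixed}.
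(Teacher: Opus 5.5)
Your proposal is correct and follows essentially the same route as the paper. The bounds on $X_Aq$ and $\HS q$ come from dividing the leading-edge estimates by $v\ge c_Kr$. The key radial-gradient bounds $|\partial_rv|+|\partial_rX_Av|\le C_K$ come from interior parabolic estimates for $r_0^{-1}v$ and $r_0^{-1}X_Av$, after rescaling the moving-frame equation at scale $r_0$ (time by $r_0^2$, angles by $r_0/\rho_0$).

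Two small corrections. The time extent of the rescaled cylinder must be a small $K$-dependent multiple $\theta_K r_0^2$ rather than $r_0^2/4$, so that $t\asymp t_0$ stays in the range of Lemma~\ref{cont:lem:upper-leading-edge}. Also, the rescaled coefficient $r_0^2b_{\rm rad}$ is only bounded by $C_K$, not small, but boundedness is all the interior estimates need.
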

	
	\begin{proof}
		Since $q=-r+\log v$,
		\eqref{cont:eq:upper-conjugate} and
		\eqref{cont:eq:linear-leading-edge} give
		$|X_Aq|=|X_Av|/v\le C_K$.
		Fix $(t_0,r_0,\omega_0)$ with
		$2L_*\le r_0\le K\sqrt{t_0}$, and set
		$\rho_0=R_0(t_0)+r_0$.
		Choose $\theta_K>0$ sufficiently small and introduce
		\[
		t=t_0+r_0^2s,\qquad
		r=r_0(1+y_1),\qquad
		\omega=
		\exp_{\omega_0}\!\left(\frac{r_0}{\rho_0}y'\right)
		\]
		on the fixed cylinder
		$-\theta_K<s\le0$, $|y_1|\le1/2$, $|y'|\le1$.
		Then $t\asymp t_0$,
		$r_0/2\le r\le3r_0/2$, and
		$\rho\asymp\rho_0\asymp t_0$.
		Since $r\ge r_0/2\ge L_*$, choosing
		$\theta_K$ sufficiently small and $T_K$
		sufficiently large ensures that every cylinder
		lies in the region where
		Lemma~\ref{cont:lem:lower-leading-edge}
		applies with $2K$ in place of $K$.

		Set \(V(s,y)=r_0^{-1}v(t,r,\omega)\). Then
		\(v_t=r_0^{-1}V_s\), \(v_r=V_{y_1}\), and
		\(v_{rr}=r_0^{-1}V_{y_1y_1}\). Under the angular rescaling,
		\(\Delta_Sv=(\rho_0^2/r_0)\Delta_{g_{t_0,r_0}}V\).
		Thus, after multiplying \eqref{cont:eq:auxiliary-conjugate-pde}
		by \(r_0\), we obtain
		\[
		\begin{aligned}
			V_s
			={}&V_{y_1y_1}
			+\frac{\rho_0^2}{\rho^2}\Delta_{g_{t_0,r_0}}V
			+r_0b_{\rm rad}V_{y_1}\\
			&\quad
			-r_0^2b_{\rm rad}V
			-r_0^2u(t,\rho\omega)V .
		\end{aligned}
		\]
		On the cylinder, $|b_{\rm rad}|\le C/t_0$.
		Since $r_0\le K\sqrt{t_0}$, we have
		$r_0|b_{\rm rad}|+r_0^2|b_{\rm rad}|\le C_K$
		and $c_K\le\rho_0^2/\rho^2\le C_K$.
		The rescaled spherical metrics \(g_{t_0,r_0}\) have uniformly
		controlled local coefficients. Moreover, since \(r\ge r_0/2\),
		\eqref{cont:eq:upper-solution} gives
		\(r_0^2u(t,\rho\omega)
		\le Cr_0^2(1+r_0)e^{-r_0/2}\le C_K\).
		Hence the rescaled equations are uniformly parabolic, with uniformly
		bounded lower-order coefficients.

		Let \(p=X_Av\). Since \(X_A\) commutes with \(\partial_r\) and
		\(\Delta_S\), and the coefficients of the linear terms are independent of \(\omega\),
		\[
		p_t=p_{rr}+b_{\rm rad}(p_r-p)+\rho^{-2}\Delta_Sp-2up.
		\]
		Thus \(P=r_0^{-1}p\) satisfies the same rescaled equation as \(V\),
		with \(r_0^2u\) replaced by \(2r_0^2u\). By
		Lemma~\ref{cont:lem:upper-leading-edge}, \(|V|+|P|\le C_K\) on the
		fixed cylinder. Interior parabolic estimates on a smaller concentric
		cylinder therefore give
		\(|\partial_{y_1}V|+|\partial_{y_1}P|\le C_K.\)
		Since \(\partial_rv=\partial_{y_1}V\) and
		\(\partial_rX_Av=\partial_{y_1}P\), we obtain
		\begin{equation}\label{cont:eq:leading-radial-gradient}
			|\partial_r v|+|\partial_r X_Av|\le C_K.
		\end{equation}
		Using \(v\ge c_Kr\), we obtain
		\[
		\left|\partial_rX_Aq\right|
		\le
		\frac{|\partial_rX_Av|}{v}
		+\frac{|X_Av|\,|\partial_rv|}{v^2}
		\le\frac{C_K}{r}.
		\]
		
		For a unit spherical tangent vector \(e\), take
		\(A=e\otimes\omega-\omega\otimes e\). At \(\omega\),
		\(X_A^2v=\HS v[e,e]\). Hence
		\eqref{cont:eq:second-rotation-input} and \(v\ge c_Kr\) give
		\[
		\begin{aligned}
			\HS\log v[e,e]
			&=
			\frac{\HS v[e,e]}{v}
			-\frac{(d_Sv[e])^2}{v^2}\\
			&\le
			C_K\frac{1+r+\log t}{r}
			\le
			C_K\left(1+\frac{\log t}{r}\right).
		\end{aligned}
		\]
		Since \(q=-r+\log v\), this proves the spherical Hessian bound in
		\eqref{cont:eq:relative-angular}.
		
		Finally, the polar Hessian identity
		\eqref{eq:euclidean-polar-hessian} and
		\eqref{cont:eq:relative-angular}, together with
		$\rho\asymp t$ and $r\le K\sqrt t\le t$
		for large $t$, give
		\[
		|D_x^2q[\omega,e]|
		\le C_K\left(\frac1{\rho r}
		+\frac1{\rho^2}\right)|e|
		\le\frac{C_K}{tr}|e|,
		\]
		which proves \eqref{cont:eq:relative-mixed}.
	\end{proof}

	\subsection{Radial logarithmic derivative estimates}

	We now turn to the radial estimates. The first-order
	bound follows from the estimates above, while
	the second-order bound requires a blow-up argument.

	\begin{lemma}[Radial logarithmic concavity]
		\label{cont:lem:radial-curvature}
		For every fixed \(K>0\), there are \(L_K,T_K>0\)
		such that, for \(t\ge T_K\) and
		\(L_K\le r\le K\sqrt t\),
		\begin{equation}\label{cont:eq:radial-curvature}
			q_\rho\le-\frac12,\qquad
			q_{\rho\rho}\le-\frac1{2r^2}
			.
		\end{equation}
	\end{lemma}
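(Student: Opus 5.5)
Since \(q=\log u=-r+\log v\) with \(\partial_\rho=\partial_r\), we have \(q_\rho=-1+v_r/v\) and \(q_{\rho\rho}=(\log v)_{rr}\). The plan is to bound \(v_r/v\) from above and \((\log v)_{rr}\) from above by \(-1/(2r^2)\), with separate arguments at scale \(r\ll\sqrt t\) and \(r\asymp\sqrt t\).

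\emph{First-order bound.} By \eqref{cont:eq:leading-radial-gradient}, \(|v_r|\le C_K\) on \(2L_*\le r\le 2K\sqrt t\). By Lemma~\ref{cont:lem:lower-leading-edge}, \(v\ge c_Kr\). Hence \(v_r/v\le C_K/(c_Kr)\le 1/2\) once \(L_K\ge 2C_K/c_K\), which gives \(q_\rho\le-1/2\).

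\emph{Second-order bound.} I would argue by contradiction. Suppose there are \(t_n\to\infty\) and \(L_n\le r_n\le K\sqrt{t_n}\), with \(L_n\) either fixed large or tending to infinity, and \(\omega_n\), such that \((\log v)_{rr}>-1/(2r_n^2)\). Rescale as in the proof of Lemma~\ref{cont:lem:angular-relative}: set
\[
V_n(s,y)=r_n^{-1}v\bigl(t_n+r_n^2s,\;r_n(1+y_1),\;\omega\bigr),
\]
with the tangential variables scaled by \(r_n/\rho_n\). By that proof, the rescaled equations are uniformly parabolic with bounded coefficients, and \(c\le V_n\le C\). Interior Schauder estimates give \(C^{2+\alpha}\) bounds, so along a subsequence \(V_n\to V_\infty\) in \(C^2_{\rm loc}\). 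We then distinguish two cases.

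\emph{Case 1: \(r_n/\sqrt{t_n}\to0\) (subdiffusive).} Here \(r_nb_{\rm rad}\), \(r_n^2b_{\rm rad}\) and \(r_n^2u\) all tend to \(0\), and \(\rho_n^2/\rho^2\to1\). So \(V_\infty\) solves the heat equation on \(\{s\le0\}\times\{|y_1|\le1/2\}\times\mathbb R^{N-1}\), with the tangential directions flattened. I would instead take a larger radial window. Letting the radial window extend to \(y_1>-1+\delta\), still inside the region \(r\ge L_*\), and using \(c(1+y_1)\le V\le C(1+y_1)\) together with the tangential boundedness, we obtain a positive caloric function on the half-space \(\{y_1>-1\}\), defined for all negative times, vanishing linearly at \(y_1=-1\). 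The rotation estimates give \(|X_AV|\) bounded, and the tangential derivative is of order \(r_n/\rho_n\to0\), so \(V_\infty\) is independent of \(y'\). Its radial second derivative also tends to zero from \(|\partial_r^2 v|\)-type bounds. A Liouville/rigidity argument for positive ancient solutions of the one-dimensional heat equation on a half-line with linear two-sided bounds (Widder-type) should force \(V_\infty=a(1+y_1)\). The main obstacle is that this limit has \((\log V_\infty)''=-1/(1+y_1)^2=-1\) at \(y_1=0\), whereas the hypothesis gives \(r_n^2(\log v)_{rr}=(\log V_n)_{y_1y_1}(0)>-1/2\), so in the limit \((\log V_\infty)''(0)\ge-1/2\), a contradiction. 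Making the half-line rigidity rigorous with merely linear bounds is the delicate step. I would first try comparison with \(a_\pm(1+y_1)\) in the caloric class using the strong maximum principle at the extremal ratio \(\sup V/(1+y_1)\), iterated backward in time.

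\emph{Case 2: \(r_n/\sqrt{t_n}\to\eta_0\in(0,K]\) (diffusive).} Here \eqref{cont:eq:rrm-diffusive}, upgraded to \(C^2_{\rm loc}\) in \(\eta\) by the same local parabolic regularity (uniform in \(\omega\)), gives
\[
r^2(\log v)_{rr}\to\eta_0^2(\log f)''(\eta_0),\qquad f(\eta)=\eta e^{-\eta^2/4}.
\]
Since \((\log f)''=-1/\eta^2-1/2\), the limit equals \(-1-\eta_0^2/2<-1/2\), again a contradiction. Combining both cases yields \(L_K,T_K\).
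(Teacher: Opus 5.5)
Your first-order bound and your diffusive case match the paper's argument. Your subdiffusive blow-up, centered at $r_n$ with the front $r=0$ sent to $y_1=-1$, produces exactly the limiting object the paper uses.

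\textbf{The gap.} It is the step you flag yourself: the rigidity $V_\infty=a(1+y_1)$ is never established.
\begin{itemize}
\item The route you suggest does not work as stated. The supremum of $V_\infty/(1+y_1)$ over the unbounded region $\{y_1>-1\}\times(-\infty,0]$ need not be attained at any point. It may only be approached as $y_1\downarrow-1$, $y_1\to\infty$ or $s\to-\infty$. So the strong maximum principle has no touching point to act on, and iterating backward in time does not create one. A compactness fix only shows that \emph{some} rescaled limit is linear, not $V_\infty$ itself.
\item Your remark that the radial second derivative tends to zero ``from $|\partial_r^2v|$-type bounds'' is unsupported. Interior estimates only give $|\partial_{rr}v|\le C/r$, i.e.\ bounded $\partial_{y_1y_1}V_n$. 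The vanishing of $\partial_{y_1y_1}V_\infty$ is precisely what the rigidity is supposed to prove.
\item You must take $L_n\to\infty$ so that $r_n\to\infty$. With $L_n$ fixed, $r_n$ may stay bounded, and then $r_n^2u$ and $r_n^3e^{-r_nz_1}$ need not vanish.
\end{itemize}

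\textbf{The missing idea.} It is short, and it is the paper's argument.
\begin{itemize}
\item The bound $v\le C(1+r)$ holds for all $r\ge0$, so $0\le V_\infty\le C(1+y_1)$. Hence $V_\infty$ extends continuously by zero to $y_1=-1$.
\item By the reflection principle, its odd extension across $y_1=-1$ is caloric on all of space with $|V|\le C|1+y_1|$. The full time axis is available because $t_n/r_n^2\to\infty$, though backward cylinders suffice.
\item Interior estimates on parabolic cylinders $Q_R$ around a fixed point give $|D^2V|+|\partial_sV|\le CR^{-2}\|V\|_{L^\infty(Q_R)}\le C/R\to0$. So $V$ is affine in space and independent of time.
\item Oddness in $1+y_1$ kills both the $y'$-dependence and the constant term. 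The lower bound $V_\infty\ge c(1+y_1)$ then forces $V_\infty=a(1+y_1)$ with $a>0$.
\end{itemize}
No Widder-type classification of positive solutions is needed, only the linear upper bound. Your separate proof of $y'$-independence via $|X_Av|\le C(1+r)$ is correct but redundant once this Liouville argument is in place.
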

	
	\begin{proof}
		Since \(q=-r+\log v\), combining \eqref{cont:eq:leading-radial-gradient}
		with the lower bound in \eqref{cont:eq:linear-leading-edge}
		gives
		\[
		q_\rho=-1+\frac{v_r}{v}
		\le -1+\frac{C_K}{c_Kr}.
		\]
		Choose \(L_K\ge 2L_*\) sufficiently large that
		\(C_K/(c_KL_K)\le 1/2\).
		Then \(q_\rho\le -1/2\).

		We now prove the second estimate in
		\eqref{cont:eq:radial-curvature} by contradiction.
		If it failed for every sufficiently large choice
		of \(L_K,T_K\), there would be
		\(t_n\to\infty\), \(r_n\to\infty\),
		\(r_n\le K\sqrt{t_n}\), and \(\omega_n\in\Sph\)
		such that
		\[
		r_n^2q_{\rho\rho}(t_n,r_n,\omega_n)>-\frac12.
		\]
		Passing to a subsequence, assume
		\(r_n/\sqrt{t_n}\to\eta_*\in[0,K]\).
		
		\emph{Subdiffusive case \(\eta_*=0\).}
		Set
		\[
		t=t_n+r_n^2\upsilon,\qquad
		r=r_nz_1,\qquad
		\omega=
		\exp_{\omega_n}\!\left(
		\frac{r_n}{R_0(t_n)}z'
		\right),
		\]
		and define \(v_n(\upsilon,z_1,z')=r_n^{-1}v(t,r,\omega)\).
		If \(g_n\) denotes the rescaled round metric in the \(z'\)-chart,
		then \eqref{cont:eq:auxiliary-conjugate-pde} becomes
		\begin{equation}\label{cont:eq:blowup-equation}
			\begin{aligned}
				\partial_\upsilon v_n
				={}&\partial_{z_1z_1}v_n
				+\frac{R_0(t_n)^2}{\rho^2}\Delta_{g_n}v_n\\
				&+r_n\left(\frac{N-1}{\rho}-\frac{k}{t}\right)
				\partial_{z_1}v_n
				-r_n^2\left(\frac{N-1}{\rho}-\frac{k}{t}\right)v_n\\
				&-r_n^3e^{-r_nz_1}v_n^2,
				\qquad
				\rho=R_0(t)+r .
			\end{aligned}
		\end{equation}
		On every compact subset of
		$\mathbb R_\upsilon\times(0,\infty)_{z_1}
		\times\mathbb R^{N-1}_{z'}$,
		the relation $r_n^2/t_n\to0$ gives
		$t/t_n\to1$. Since $r=r_nz_1=o(t_n)$,
		we also have $\rho/R_0(t_n)\to1$.
		Moreover, \(r_n/R_0(t_n)\to0\), so the rescaled spherical metrics
		\(g_n\) converge locally smoothly to the Euclidean metric on
		\(\mathbb R^{N-1}\). Consequently,
		\[
		\frac{R_0(t_n)^2}{\rho^2}\to1,\qquad
		r_n\left(\frac{N-1}{\rho}-\frac{k}{t}\right)\to0,\qquad
		r_n^2\left(\frac{N-1}{\rho}-\frac{k}{t}\right)\to0,
		\]
		and, on compact subsets with \(z_1>0\),
		\(r_n^3e^{-r_nz_1}\to0\).

		Lemmas~\ref{cont:lem:upper-leading-edge} and
		\ref{cont:lem:lower-leading-edge} give
		\(cz_1\le v_n\le C(z_1+r_n^{-1})\) locally for \(z_1>0\).
		The equations \eqref{cont:eq:blowup-equation} are uniformly
		parabolic on compact subsets of \(z_1>0\), so standard interior
		estimates give local Hölder compactness. Passing to a subsequence,
		we obtain
		\(v_n\to v_\infty\) locally uniformly,
		with \(v_\infty\) satisfying
		\begin{equation}\label{cont:eq:subdiffusive-limit}
			\partial_\upsilon v_\infty
			=
			\partial_{z_1z_1}v_\infty+\Delta_{z'}v_\infty,
			\qquad z_1>0,\quad \upsilon\in\mathbb R.
		\end{equation}
		The full time axis appears because \(t_n/r_n^2\to\infty\).
		The upper bound above also gives
		\(v_\infty(\upsilon,z_1,z')\le Cz_1\), so \(v_\infty\)
		extends continuously by zero to \(z_1=0\).
		
		By the zero-Dirichlet reflection principle, the odd extension of
		\(v_\infty\) across \(z_1=0\) is an entire caloric function with at
		most linear spatial growth. At a fixed point, the interior estimate
		on a parabolic cylinder \(Q_R\) gives
		\[
		|D_z^2v_\infty|+|\partial_\upsilon v_\infty|
		\le
		\frac{C}{R^2}\|v_\infty\|_{L^\infty(Q_R)}
		\le\frac{C}{R}.
		\]
		Letting \(R\to\infty\) yields
		\(v_\infty=c_{\rm lin}z_1\), with \(c_{\rm lin}>0\).

		To transfer this rigidity to the logarithmic curvature,
		we upgrade the convergence to spatial \(C^2_{\rm loc}\).
		Let \(Q'\Subset Q\Subset
		\mathbb R\times(0,\infty)\times\mathbb R^{N-1}\).
		Standard interior \(W^{2,1}_p\) estimates, parabolic Sobolev
		embedding, and Schauder estimates applied to
		\eqref{cont:eq:blowup-equation} give, for some
		\(\alpha\in(0,1)\),
		\begin{equation}\label{cont:eq:blowup-regularity}
			\|v_n\|_{C^{2+\alpha,\,1+\alpha/2}(Q')}
			\le C_{Q'}.
		\end{equation}
		Since the locally uniform limit is already identified,
		\(v_n\to v_\infty\) in spatial \(C^2_{\rm loc}\). Hence, at
		\((\upsilon,z_1,z')=(0,1,0)\),
		\[
		r_n^2q_{\rho\rho}
		=
		\frac{\partial_{z_1z_1}v_n}{v_n}
		-\left(\frac{\partial_{z_1}v_n}{v_n}\right)^2
		\longrightarrow-1,
		\]
		contradicting the choice of the sequence.
		
		\emph{Diffusive case \(\eta_*>0\).}
		Set
		\[
		t=t_n(1+\upsilon),\qquad
		r=\sqrt{t_n}\,z_1,\qquad
		\omega=
		\exp_{\omega_n}\!\left(
		\frac{\sqrt{t_n}}{R_0(t_n)}z'
		\right),
		\]
		and define
		\(v_n(\upsilon,z_1,z')=t_n^{-1/2}v(t,r,\omega)\).
		If \(g_n\) denotes the rescaled round metric in the \(z'\)-chart,
		then \eqref{cont:eq:auxiliary-conjugate-pde} becomes
		\begin{equation}\label{cont:eq:diffusive-blowup-equation}
			\begin{aligned}
				\partial_\upsilon v_n
				={}&\partial_{z_1z_1}v_n
				+\frac{R_0(t_n)^2}{\rho^2}\Delta_{g_n}v_n\\
				&+\sqrt{t_n}
				\left(\frac{N-1}{\rho}-\frac{k}{t}\right)
				\partial_{z_1}v_n\\
				&-t_n
				\left(\frac{N-1}{\rho}-\frac{k}{t}\right)v_n
				-t_n^{3/2}e^{-\sqrt{t_n}z_1}v_n^2,
				\qquad
				\rho=R_0(t)+\sqrt{t_n}z_1 .
			\end{aligned}
		\end{equation}
		Passing to a further subsequence, we may assume
		\(\omega_n\to\omega_*\in\Sph\).
		On compact subsets of \(z_1>0\) and \(1+\upsilon>0\), set
		\(\eta=z_1/\sqrt{1+\upsilon}\).
		Since \(\sqrt{t_n}/R_0(t_n)\to0\), the rescaled angular points converge
		to \(\omega_*\) uniformly for \(z'\) in compact sets. Moreover,
		\(v_n=\sqrt{1+\upsilon}\,t^{-1/2}v(t,\sqrt t\,\eta,\omega)\).
		Hence \eqref{cont:eq:rrm-diffusive} gives
		\begin{equation}\label{cont:eq:diffusive-profile}
			v_n(\upsilon,z_1,z')
			\longrightarrow
			v_\infty(\upsilon,z_1)
			:=
			\mathfrak c(\omega_*)z_1
			\exp\!\left(
			-\frac{z_1^2}{4(1+\upsilon)}
			\right)
		\end{equation}
		locally uniformly for \(z_1>0\) and \(1+\upsilon>0\).

		To recover second derivatives from this \(C^0\) limit,
		we use the rescaled equation.
		On such compact sets,
		\(R_0(t_n)^2/\rho^2\to(1+\upsilon)^{-2}\),
		\[
		\sqrt{t_n}
		\left(\frac{N-1}{\rho}-\frac{k}{t}\right)\to0,
		\qquad
		-t_n
		\left(\frac{N-1}{\rho}-\frac{k}{t}\right)
		\to\frac{3}{2(1+\upsilon)},
		\]
		and \(t_n^{3/2}e^{-\sqrt{t_n}z_1}\to0\), while \(g_n\)
		converges locally smoothly to the Euclidean metric.
		Thus \eqref{cont:eq:diffusive-blowup-equation} is uniformly
		parabolic on compact subsets away from \(z_1=0\), with uniformly
		controlled coefficients. The same interior estimates as in
		\eqref{cont:eq:blowup-regularity} upgrade
		\eqref{cont:eq:diffusive-profile} to
		\(v_n\to v_\infty\) in spatial \(C^2_{\rm loc}\).

		Put $z_{1,n}=r_n/\sqrt{t_n}\to\eta_*$.
		Since $q=-r+\log v$, the spatial
		$C^2_{\rm loc}$ convergence and positivity of
		$v_\infty$ yield
		\[
		\begin{aligned}
			r_n^2q_{\rho\rho}(t_n,r_n,\omega_n)
			&=z_{1,n}^2
			(\log v_n)_{z_1z_1}(0,z_{1,n},0)\\
			&\longrightarrow
			\eta_*^2(\log v_\infty)_{z_1z_1}(0,\eta_*)\\
			&=-1-\frac{\eta_*^2}{2},
		\end{aligned}
		\]
		again contradicting the choice of the sequence.
		This proves \eqref{cont:eq:radial-curvature}.
	\end{proof}

	Finally, we combine the radial, angular, and mixed
	estimates to obtain the  logarithmic-Hessian bound through the diffusive scale.

	\begin{proof}[Proof of Proposition~\ref{cont:prop:through-diffusive}]
		Since
		\(\rho=R_0(t)+r=2t+O_K(\sqrt t+\log t)\), we have
		\(\rho\asymp t\) uniformly in the region under consideration.
		For \(e\perp\omega\), the polar Hessian formula
		\eqref{eq:euclidean-polar-hessian} and the preceding lemmas give
		\[
		D_x^2q[e,e]
		=
		\rho^{-1}q_\rho|e|^2+\rho^{-2}\HS q[e,e]
		\le
		-\frac{c}{t}|e|^2
		+\frac{C_K\log t}{t^2}|e|^2.
		\]
		After increasing \(T_K\), this yields
		\[
		D_x^2q[e,e]\le-\frac{c_{\rm tan}}{t}|e|^2
		\]
		for some \(c_{\rm tan}>0\).
		
		For a general vector \(b\omega+e\), with \(e\perp\omega\),
		the radial and mixed estimates give
		\[
		\begin{aligned}
			D_x^2q[b\omega+e,b\omega+e]
			&\le
			-\frac{b^2}{2r^2}
			+\frac{2C_K}{tr}|b||e|
			-\frac{c_{\rm tan}}{t}|e|^2\\
			&\le
			-\frac{b^2}{4r^2}
			-\left(
			\frac{c_{\rm tan}}{t}
			-\frac{4C_K^2}{t^2}
			\right)|e|^2.
		\end{aligned}
		\]
		Increasing \(T_K\) once more, we have
		\(c_{\rm tan}/t - 4C_K^2/t^2 \ge c_{\rm tan}/(2t)\).
		Since \(r^2\le K^2t\),
		\[
		D_x^2q[b\omega+e,b\omega+e]
		\le
		-\frac1t
		\min\left\{\frac1{4K^2},\frac{c_{\rm tan}}2\right\}
		\left(b^2+|e|^2\right),
		\]
		which proves \eqref{cont:eq:through-diffusive-hessian}.
	\end{proof}

	\section{Amplitude estimates near the critical speed}
	\label{cont:sec:near-critical}
	
	In Sections~\ref{cont:sec:near-critical} and \ref{cont:sec:fixed-gap}, we develop amplitude estimates in the near-critical and fixed-gap regimes, respectively. These estimates will be used in Section~\ref{cont:sec:exterior} to establish the logarithmic-Hessian bound in the Gaussian exterior.
	For the analysis in these sections and the Gaussian exterior,
	we use the elapsed time
	\begin{equation}\label{cont:eq:elapsed-time}
		\sigma=t-1,
	\end{equation}
	where \(t\) denotes the original physical time, and continue to write
	\(u(\sigma,x)\) for the time-shifted solution \(u(1+\sigma,x)\).
	Thus
	\begin{equation}\label{cont:eq:reset-kpp}
		u_\sigma=\Delta u+u-u^2,\qquad
		\sigma>0,\qquad u(0,x)=u_0(x).
	\end{equation}

	\subsection{Weighted masses and the limiting amplitude}
	
	Set
	\begin{equation}\label{cont:eq:velocity}
		p=\frac{x}{2\sigma},\qquad x=2\sigma p,\qquad
		\chi(p)=|p|-1,\qquad \chi_0=\frac18.
	\end{equation}
	For fixed \(p\), write \(\chi=\chi(p)\).
	Thus \(|p|=1\) corresponds to the critical physical speed \(2\).
	
	Let
	\begin{equation}\label{cont:eq:heat-kernel}
		\mathcal K_\sigma(x)
		=(4\pi\sigma)^{-N/2}e^{-|x|^2/(4\sigma)},
		\qquad \sigma>0.
	\end{equation}
	By comparison,
	\begin{equation}\label{cont:eq:heat-comparison}
		0\le u(\sigma,x)\le
		\min\left\{K_0,\,
		e^\sigma\int\mathcal K_\sigma(x-y)u_0(y)\,dy\right\},
		\qquad \sigma>0.
	\end{equation}
	
	For \(p\in\R^N\), define the initial weighted mass, the evolving
	weighted mass, and the weighted absorption rate by
	\begin{align}
		\mathfrak m_0(p)
		&=\int e^{p\cdot y}u_0(y)\,dy,
		\label{cont:eq:initial-mass}\\
		\mathfrak m_\sigma(p)
		&=e^{-(1+|p|^2)\sigma}
		\int e^{p\cdot y}u(\sigma,y)\,dy,
		\label{cont:eq:tilted-mass}\\
		\mathfrak n_\sigma(p)
		&=e^{-(1+|p|^2)\sigma}
		\int e^{p\cdot y}u(\sigma,y)^2\,dy.
		\label{cont:eq:absorption}
	\end{align}
	Integration by parts in \eqref{cont:eq:reset-kpp} gives
	\begin{equation}\label{cont:eq:mass-derivative}
		\partial_\sigma\mathfrak m_\sigma(p)
		=-\mathfrak n_\sigma(p).
	\end{equation}
	The identity follows by testing against \(e^{p\cdot y}\varphi_L(y)\) and letting \(L\to\infty\); the Gaussian bound \eqref{cont:eq:heat-comparison} makes the cutoff errors vanish.
	Hence
	\[
	0\le \mathfrak m_\sigma(p)\le \mathfrak m_0(p).
	\]
	
	For \(|p|>1\), define the limiting amplitude by
	\begin{equation}\label{cont:eq:limiting-amplitude}
		\mathfrak a(p)
		:=\lim_{\sigma\to\infty}\mathfrak m_\sigma(p)
		=\mathfrak m_0(p)-\int_0^\infty\mathfrak n_a(p)\,da
		\ge0.
	\end{equation}
	The one-dimensional exponential-moment framework for amplitudes
	ahead of the front is developed in \cite{Brunet2023}.
	The estimates below establish the uniform near-critical bounds
	needed here in multiple dimensions.

	\subsection{Near-critical moments and amplitude bounds}
	
	Throughout the moment and point-amplitude estimates, write
	\(d_N=(N-1)/2\), so that \(k=d_N+3/2\).
	The centering at \(2\sigma p\) is natural both for the Gaussian
	observation point and for differentiation with respect to \(p\).
	For \(j=0,1,2\), define
	\begin{equation}\label{cont:eq:centered-moments}
		\mathfrak n_{\sigma,j}(p)
		:=e^{-(1+|p|^2)\sigma}
		\int e^{p\cdot y}|y-2\sigma p|^j
		u(\sigma,y)^2\,dy .
	\end{equation}
	Thus \(\mathfrak n_{\sigma,0}=\mathfrak n_\sigma\).
	
	\begin{lemma}[Near-critical weighted moments]
		\label{cont:lem:near-moments}
		There exist \(\sigma_0\ge1\) and \(C>0\) such that, for
		\(0<\chi(p)\le\chi_0\) and \(\sigma\ge\sigma_0\),
		\begin{align}
			\mathfrak n_\sigma(p)
			&\le C\sigma^{-3/2}e^{-\chi^2\sigma},
			\label{cont:eq:near-zeroth-moment}\\
			\mathfrak n_{\sigma,2}(p)
			&\le C(\chi^2\sigma^2+\sigma+\log^2\sigma)
			\sigma^{-3/2}e^{-\chi^2\sigma},
			\label{cont:eq:near-second-moment}\\
			|\nabla_p\mathfrak n_\sigma(p)|
			&\le C(1+\log\sigma+\chi\sigma)
			\sigma^{-3/2}e^{-\chi^2\sigma}.
			\label{cont:eq:near-first-moment}
		\end{align}
		The constants are uniform in
		\(0<\chi(p)\le\chi_0\) and \(p/|p|\in\Sph\).
		On \(0\le\sigma\le\sigma_0\), all finite-time quantities appearing
		above are uniformly bounded for \(1\le|p|\le1+\chi_0\).
	\end{lemma}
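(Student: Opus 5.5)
The plan is to bound every moment by inserting the pointwise upper bounds already available for \(u\) and computing the resulting exponential integrals in polar coordinates adapted to the direction \(\hat e=p/|p|\). Write \(y=\rho\omega\), \(t=\sigma+1\), \(\rho=\RR(t)+r\) and \(\cos\theta=\omega\cdot\hat e\). The exponent in \eqref{cont:eq:absorption} is
\[
p\cdot y-(1+|p|^2)\sigma=-(|p|-1)^2\sigma-k|p|\log t+|p|r-|p|\rho(1-\cos\theta)+O(1).
\]
For the profile I will use three bounds. For \(r\ge0\), Lemma~\ref{cont:lem:upper-leading-edge} gives \(u^2\le C(1+r)^2e^{-2r-r^2/(4t)}\). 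Behind the front, \(u\le K_0\). For \(\rho\le 2R_*\), where only finitely many \(\sigma\)-independent contributions occur, I use \eqref{cont:eq:heat-comparison}.

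\textbf{Zeroth moment.} I integrate first over the sphere of radius \(\rho\). The factor \(e^{-|p|\rho(1-\cos\theta)}\) is a Laplace-type integral, so
\[
\int_{\Sph}e^{-|p|\rho(1-\cos\theta)}\,d\omega\le C(|p|\rho)^{-d_N},
\]
which together with \(\rho^{N-1}\) contributes \(C\sigma^{d_N}\) for \(\rho\asymp\sigma\). I then integrate in \(r\).
\begin{itemize}
\item For \(r\ge0\), the weight \((1+r)^2e^{-(2-|p|)r}\) is integrable uniformly, since \(|p|\le 1+\chi_0<2\).
\item For \(r<0\), \(e^{|p|r}\) is integrable.
\item The region \(\rho\lesssim\sigma\) far behind the front carries an extra factor \(e^{-c\sigma}\).
\end{itemize}
Since \(t^{-k|p|}\le Ct^{-k}\) and \(k-d_N=3/2\), this gives \eqref{cont:eq:near-zeroth-moment}. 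The bounds for \(\sigma\le\sigma_0\) follow directly from \eqref{cont:eq:heat-comparison} and compactness of \(u_0\)'s support.

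\textbf{Second moment.} I split \(y-2\sigma p\) into a radial part along \(\hat e\) and a transverse part. The radial part is
\[
\rho\cos\theta-2\sigma|p|=r-k\log t-2\sigma\chi-\rho(1-\cos\theta)+O(1),
\]
whose square is \(\lesssim r^2+\log^2\sigma+\chi^2\sigma^2+\rho^2(1-\cos\theta)^2\). The transverse part has size \(\rho\sin\theta\), whose square under the Laplace weight averages to \(O(\rho/|p|)=O(\sigma)\); likewise \(\rho^2(1-\cos\theta)^2\) averages to \(O(1)\). The extra polynomial weights in \(r\) are absorbed by the exponential decay in \(r\). This yields \eqref{cont:eq:near-second-moment}.

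\textbf{Gradient bound.} We have \(\nabla_p\mathfrak n_\sigma=e^{-(1+|p|^2)\sigma}\int (y-2\sigma p)e^{p\cdot y}u^2\,dy\). The radial component is handled as above and gives \(1+\log\sigma+\chi\sigma\). Cauchy--Schwarz on the transverse component would only give \(\sqrt\sigma\), which is too weak; this is the main obstacle. I would exploit the cancellation by integrating by parts on each sphere. The transverse part of \(y\) pairs with \(e^{|p|\rho\,\omega\cdot\hat e}\) through the identity
\[
\nabla_S e^{|p|\rho\,\omega\cdot\hat e}=|p|\rho\,(\hat e-(\omega\cdot\hat e)\omega)\,e^{|p|\rho\,\omega\cdot\hat e}.
\]
I first rewrite \(y_\perp\) in terms of the tangential projection of \(\hat e\), absorbing the difference into the radial component. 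Integrating by parts moves the derivative onto \(u^2\), at the cost of a factor \(\rho/(|p|\rho)=O(1)\) and a harmless divergence term of size \((N-1)\cos\theta\). The derivative \(\nabla_S(u^2)=2u\nabla_Su\) is controlled by Lemma~\ref{cont:lem:upper-leading-edge}, since \(|X_Av|\le C(1+r)e^{-r^2/(8t)}\) gives \(|\nabla_Su|\le C(1+r)e^{-r-r^2/(8t)}\) for \(r\ge0\). Behind the front and near the origin I would use Lemma~\ref{lem:firstmixed} and the reflection bound \eqref{eq:angularcone}. The transverse contribution is then bounded by a zeroth-moment-type integral of order \(\sigma^{-3/2}e^{-\chi^2\sigma}\), which gives \eqref{cont:eq:near-first-moment}.

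The bookkeeping I would check carefully is that the errors \(O(1)\), \(O(\log\sigma/\sigma)\) in the exponent, coming from \(t\) versus \(\sigma\) and \(|p|k\) versus \(k\), stay uniform for \(0<\chi\le\chi_0\).
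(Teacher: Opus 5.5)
Your proposal is correct and follows essentially the paper's route. Both arguments use polar coordinates about \(p/|p|\), the Laplace-type spherical bound \((1+|p|\rho)^{-d_N}\), the squared envelope from Lemma~\ref{cont:lem:upper-leading-edge}, the exact front factor, and, for the transverse gradient, a rotational integration by parts on spheres that moves the derivative onto \(u^2\).

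The only real difference is how the resulting angular derivative of \(u^2\) is bounded. The paper converts \(|X_A(u^2)|\) into \(C(-\partial_\rho u^2)\) by the reflection estimate and then integrates by parts in \(\rho\). You instead give \(u|\nS u|\) the same envelope as \(u^2\), using the angular bound of Lemma~\ref{cont:lem:upper-leading-edge} ahead of the front and Lemma~\ref{lem:firstmixed} behind it; this works equally well. Using the scalar field \(X_A\) with \(A=e\otimes\hat e-\hat e\otimes e\), as the paper does, would also remove the \(\rho(1-\cos\theta)\) remainder in your vector-valued rewriting of \(y_\perp\).
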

	
	\begin{proof} 
		After increasing \(\sigma_0\) if necessary, we may assume that 
		\(1+\sigma\ge T_{\rm le}\) for every \(\sigma\ge\sigma_0\). 
		Put 
		\(\rho=|y|\) and \(r=\rho-R_0(1+\sigma)\),
		so that \(r\) is the original physical front coordinate at time 
		\(1+\sigma\). Lemma~\ref{cont:lem:upper-leading-edge} and \(u\le K_0\) 
		give 
		\begin{equation}\label{cont:eq:squared-envelope} 
			\begin{split} 
				u(\sigma,\rho\omega')^2 
				&\le f_{\rm env}(r),\\ 
				f_{\rm env}(r) 
				&= 
				\begin{cases} 
					K_0^2, & r<0,\\ 
					C(1+r)^2e^{-2r}, & r\ge0. 
				\end{cases} 
			\end{split} 
		\end{equation} 
		
		For \(j=0,1\), the elementary spherical integrals satisfy 
		\begin{equation}\label{cont:eq:spherical-moments} 
			\int_{\Sph} 
			e^{b_{\rm sph}\omega\cdot\omega'} 
			(1-\omega\cdot\omega')^j\,d\omega' 
			\le 
			C_j e^{b_{\rm sph}} 
			(1+b_{\rm sph})^{-d_N-j}, 
			\qquad b_{\rm sph}\ge0. 
		\end{equation} 
		Indeed, setting \(\zeta=1-\omega\cdot\omega'\in[0,2]\),
		and splitting into a neighborhood of \(\zeta=0\) and its 
		complement gives \eqref{cont:eq:spherical-moments}. 
		
		We shall also use the following radial moment bound. 
		For \(1\le |p|\le1+\chi_0<2\) and each fixed 
		\(\ell=0,1,2\), 
		\begin{equation}\label{cont:eq:radial-moment-bound} 
			\int_{-R_0(1+\sigma)}^\infty 
			(1+R_0(1+\sigma)+r)^{d_N} 
			|r|^\ell e^{|p|r} 
			f_{\rm env}(r)\,dr 
			\le C_\ell(1+\sigma)^{d_N}. 
		\end{equation} 
		The negative-\(r\) part is exponentially integrable because 
		\(e^{|p|r}\) decays as \(r\to-\infty\), while on \(r\ge0\) 
		the envelope in \eqref{cont:eq:squared-envelope} leaves the fixed 
		exponential margin \(2-|p|\ge1-\chi_0>0\).
		
		We first prove the zeroth-moment estimate \eqref{cont:eq:near-zeroth-moment}.
		Write $p=|p|\omega$ and $y=\rho\omega'$.
		Using \eqref{cont:eq:spherical-moments} with $j=0$,
		the inequality
		$\rho^{N-1}(1+|p|\rho)^{-d_N}
		\le C(1+\rho)^{d_N}$,
		and \eqref{cont:eq:radial-moment-bound} with $\ell=0$,
		we obtain
		\[
		\mathfrak n_\sigma(p)
		\le C(1+\sigma)^{d_N}
		e^{|p|R_0(1+\sigma)-(1+|p|^2)\sigma}.
		\]
		Since $|p|=1+\chi$, the exact front factor is
		\[
		|p|R_0(1+\sigma)-(1+|p|^2)\sigma
		=2|p|-k|p|\log(1+\sigma)-\chi^2\sigma.
		\]
		Consequently, using \(k=d_N+3/2\), we obtain
		\[
		\mathfrak n_\sigma(p)
		\le C(1+\sigma)^{-3/2-k\chi}
		e^{-\chi^2\sigma}
		\le C\sigma^{-3/2}e^{-\chi^2\sigma}.
		\]
		Here \(e^{2|p|}\) is uniformly bounded on the near-critical 
		annulus. This proves \eqref{cont:eq:near-zeroth-moment}.

		We next estimate the second centered moment.
		Again write $p=|p|\omega$ and $y=\rho\omega'$.
		Then
		\[
		\begin{aligned}
			|y-2\sigma p|^2
			&=(\rho-2|p|\sigma)^2
			+4|p|\sigma\rho(1-\omega\cdot\omega'),\\
			\rho-2|p|\sigma
			&=r+2-k\log(1+\sigma)-2\chi\sigma.
		\end{aligned}
		\]
		For the radial part, the bound
		$(\rho-2|p|\sigma)^2
		\le C(r^2+1+\log^2(1+\sigma)+\chi^2\sigma^2)$,
		together with
		\eqref{cont:eq:spherical-moments} for $j=0$
		and \eqref{cont:eq:radial-moment-bound} for
		$\ell=0,2$, gives a contribution bounded by
		$C(1+\log^2(1+\sigma)+\chi^2\sigma^2)
		\sigma^{-3/2}e^{-\chi^2\sigma}$.
		For the angular part \(4|p|\sigma\rho(1-\omega\cdot\omega')\), we use
		\eqref{cont:eq:spherical-moments} with $j=1$
		and \eqref{cont:eq:radial-moment-bound} with
		$\ell=0$. The extra factor
		$(1+|p|\rho)^{-1}$ compensates the factor
		$|p|\rho$, so this contribution is bounded by
		$C\sigma\,\sigma^{-3/2}e^{-\chi^2\sigma}$.
		Combining both contributions yields
		\[
		\mathfrak n_{\sigma,2}(p)
		\le C(\chi^2\sigma^2+\sigma+\log^2\sigma)
		\sigma^{-3/2}e^{-\chi^2\sigma},
		\]
		after increasing $\sigma_0$ if necessary.
		This proves \eqref{cont:eq:near-second-moment}.

		It remains to prove the gradient estimate
		\eqref{cont:eq:near-first-moment}.
		Differentiating $\mathfrak n_\sigma$ with respect to $p$ gives
		\begin{equation}\label{cont:eq:gradient-absorption-identity}
			\nabla_p\mathfrak n_\sigma(p)
			=
			e^{-(1+|p|^2)\sigma}
			\int (y-2\sigma p)e^{p\cdot y}u(\sigma,y)^2\,dy.
		\end{equation}
		For the normal component, use
		$\omega\cdot(y-2\sigma p)
		=(\rho-2|p|\sigma)-\rho(1-\omega\cdot\omega')$.
		The first term is controlled by
		\eqref{cont:eq:radial-moment-bound} with $\ell=0,1$ and \eqref{cont:eq:spherical-moments}
		with $j=0$,
		while the second is controlled by
		\eqref{cont:eq:radial-moment-bound} with $\ell=0$ and \eqref{cont:eq:spherical-moments}
		with $j=1$. Hence
		\begin{equation}\label{cont:eq:normal-first-moment}
			|\omega\cdot\nabla_p\mathfrak n_\sigma(p)|
			\le C(1+\log\sigma+\chi\sigma)
			\sigma^{-3/2}e^{-\chi^2\sigma}.
		\end{equation}

		We now turn to the transverse components. A direct 
		absolute-value estimate of the transverse first moment would incur 
		an unnecessary \(O(\sqrt\sigma)\) loss, so we instead use the 
		rotational cancellation. Let \(e\perp\omega\) be a unit vector and
		set \(A=e\otimes\omega-\omega\otimes e\). 
		Since \(X_A\) is divergence-free and 
		\(X_A(e^{p\cdot y})=-|p|(y\cdot e)e^{p\cdot y}\),
		integration by parts gives 
		\begin{equation}\label{cont:eq:transverse-rotation-identity} 
			|p| 
			\int 
			(y\cdot e)e^{p\cdot y} 
			u(\sigma,y)^2\,dy 
			= 
			\int 
			e^{p\cdot y}X_A(u(\sigma,y)^2)\,dy. 
		\end{equation} 
		For $\rho\ge2R_*$, the reflection estimate yields
		$|X_A(u^2)|\le C(-\partial_\rho u^2)$.
		Integrating by parts in $\rho$ gives
		\[
		\left|\int e^{p\cdot y}X_A(u^2)\,dy\right|
		\le C\int e^{p\cdot y}u^2\,dy+C.
		\]
		Indeed, the logarithmic derivative of the radial weight
		is $|p|\omega\cdot\omega'+(N-1)/\rho$, which is uniformly
		bounded on $\rho\ge2R_*$.
		The boundary term at infinity vanishes by Gaussian decay,
		while the boundary term at $\rho=2R_*$ and the contribution
		from the fixed inner ball are uniformly bounded.
		Consequently, \eqref{cont:eq:transverse-rotation-identity}
		and \eqref{cont:eq:near-zeroth-moment} give
		\[
		|e\cdot\nabla_p\mathfrak n_\sigma(p)|
		\le C\mathfrak n_\sigma(p)
		+Ce^{-(1+|p|^2)\sigma}
		\le C\sigma^{-3/2}e^{-\chi^2\sigma},
		\]
		where we used $|p|\ge1$ and
		$1+|p|^2-\chi^2=2+2\chi$.
		Together with \eqref{cont:eq:normal-first-moment},
		this proves \eqref{cont:eq:near-first-moment}.

		Finally, on the fixed time interval 
		\(0\le\sigma\le\sigma_0\), the required weighted moments and 
		\(p\)-derivatives are uniformly bounded on 
		\(1\le|p|\le1+\chi_0\). 
		Indeed, compact support of the initial datum and the heat-kernel 
		comparison give uniform Gaussian spatial tails on bounded time 
		intervals, which dominate the exponential weights and the polynomial 
		factors appearing in these quantities. 
	\end{proof}

	The moment estimates above control the gradient of the limiting amplitude. Its positive lower bound requires a separate Feynman--Kac argument.
	
	\begin{lemma}[Near-critical amplitude lower bound and gradient estimate]
		\label{cont:lem:near-amplitude}
		For \(0<\chi(p)\le\chi_0\), one has
		\begin{equation}\label{cont:eq:near-amplitude-bounds}
			\mathfrak a(p)\ge c\chi(p),\qquad
			|\nabla_p\mathfrak a(p)|\le C,
		\end{equation}
		with constants uniform throughout the near-critical annulus.
	\end{lemma}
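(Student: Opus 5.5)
The gradient bound will be handled first, since it follows directly from the moment estimates. By \eqref{cont:eq:limiting-amplitude}, $\mathfrak a(p)=\mathfrak m_0(p)-\int_0^\infty\mathfrak n_a(p)\,da$. Differentiating under the integral, which is justified by the uniform finite-time bounds of Lemma~\ref{cont:lem:near-moments} and dominated convergence, gives
\[
|\nabla_p\mathfrak a(p)|\le|\nabla_p\mathfrak m_0(p)|+\int_0^{\sigma_0}|\nabla_p\mathfrak n_a|\,da+C\int_{\sigma_0}^\infty(1+\log a+\chi a)a^{-3/2}e^{-\chi^2a}\,da .
\]
The first two terms are bounded because $u_0$ is compactly supported. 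In the last integral, the $(1+\log a)a^{-3/2}$ part is integrable uniformly in $\chi$. For the $\chi a\cdot a^{-3/2}e^{-\chi^2a}$ part, substitute $b=\chi^2a$: it equals $\chi\int a^{-1/2}e^{-\chi^2a}\,da=\int b^{-1/2}e^{-b}\,db\le\sqrt\pi$. This gives $|\nabla_p\mathfrak a|\le C$ uniformly on the annulus $0<\chi\le\chi_0$.

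For the lower bound $\mathfrak a(p)\ge c\chi$, I would not rely on the subtraction formula, since the integral $\int_0^\infty \mathfrak n_a\,da$ is of order one. Instead I would use a Feynman--Kac, or equivalently linear-comparison, representation of $\mathfrak m_\sigma$. Set $\tilde u(\sigma,y)=e^{p\cdot y-(1+|p|^2)\sigma}u(\sigma,y)$, so that $\mathfrak m_\sigma=\int\tilde u$. Then $\tilde u$ satisfies a heat equation with drift $-2p$ and absorption $-u$:
\[
\partial_\sigma\tilde u=\Delta\tilde u-2p\cdot\nabla\tilde u-u\,\tilde u .
\]
Hence $\mathfrak m_\sigma(p)=\int u_0(y)e^{p\cdot y}\,\mathbb E_y\bigl[\exp(-\int_0^\sigma u(\sigma-s,\ldots)\,ds)\bigr]\,dy$, where the expectation is over a Brownian path with drift. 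In original space–time, the path moves at velocity $2p$, that is, at speed $2|p|>2$ from the support of $u_0$. The plan is to bound from below the probability that the path stays ahead of the front, for instance in the region $\{|x|\ge R_0(1+s)+L\}$ up to a fixed initial layer. On that event, Lemma~\ref{cont:lem:upper-leading-edge} bounds the absorption by $u\le C(1+r)e^{-r}$, so the exponential factor stays bounded below.

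The main obstacle is the probability estimate. The path's mean moves away from the front at relative speed $2(|p|-1)=2\chi$ plus the logarithmic correction, while Brownian fluctuations are of size $\sqrt s$. This is a Brownian motion with drift $\chi$ that must avoid a barrier, essentially $-k\log s$ plus a constant, which is a one-dimensional problem in the normal direction. The classical ballot-type estimate then gives survival probability $\asymp\chi$. The absorption accumulated along the surviving paths, $\int(1+r)e^{-r}\,ds$, is finite on typical excursions, and I would handle it by conditioning: surviving paths typically escape to distance $\gtrsim\sqrt s$. Concretely, I would replace the probabilistic argument by a PDE subsolution. In the frame $r=\rho-R_0$, take the linear subsolution $c\,\psi_{\rm le}(r)$ of Lemma~\ref{cont:lem:lower-leading-edge}, which is comparable to $r$, multiplied by the drifted exponential weight. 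Integrating it against $e^{p\cdot y}$ should produce a quantity of order $\int r e^{-\chi r}\ldots$, and tracking it shows that $\mathfrak m_\sigma$ stays above $c\chi$ for all $\sigma$. I expect the uniformity as $\chi\downarrow0$ in this survival estimate to be the delicate step; the angular integration over the sphere contributes the same $(1+\sigma)^{-d_N}$ factor as in Lemma~\ref{cont:lem:near-moments}, and this should cancel against the $\sigma^{d_N}$ volume growth.
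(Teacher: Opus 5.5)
\textbf{Verdict: there is a genuine gap.} Your gradient estimate is correct and is essentially the paper's argument: split at $\sigma_0$, use \eqref{cont:eq:near-first-moment}, and note $\chi\int_0^\infty a^{-1/2}e^{-\chi^2a}\,da=\sqrt\pi$. The lower bound $\mathfrak a(p)\ge c\chi$, however, is not proved.

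\emph{The Feynman--Kac route.} On your event, the path stays in $\{|x|\ge R_0(1+s)+L\}$. There Lemma~\ref{cont:lem:upper-leading-edge} only gives $u\le C(1+L)e^{-L}$ pointwise along the path. Its integral over $a\in(0,\infty)$ diverges, so the Feynman--Kac factor is not bounded below on that event. Your remark that surviving paths ``typically'' escape to distance $\gtrsim\sqrt s$ is exactly the quantitative, uniform-in-$\chi$ statement that would be needed, and it is left unproved. (A minor point: for the mass, the killing rate along a path started at $y\in\supp u_0$ is $u(a,y+2pa+\sqrt2\,\mathsf B_a)$, not $u(\sigma-s,\cdot)$.)

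\emph{The PDE replacement.} It does not work as described either. Lemma~\ref{cont:lem:lower-leading-edge} gives $v\ge c_Kr$ only on $L_*\le r\le K\sqrt t$ with $K$ fixed, and the weight produces $e^{|p|R_0}e^{+\chi r}$, not $e^{-\chi r}$. Restricted to that shell, the weighted integral is of order $c_K\sigma^{-3/2-k\chi}e^{-\chi^2\sigma}\int_{L_*}^{K\sqrt\sigma}re^{\chi r}\,dr$. This tends to $0$ as $\sigma\to\infty$, because for $\sigma\gg\chi^{-2}$ the weighted mass sits near $r\approx2\chi\sigma$. So no bound ``$\mathfrak m_\sigma\ge c\chi$ for all $\sigma$'' comes out. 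Moreover, $\mathfrak m_\sigma$ is nonincreasing, so a lower bound at a finite time says nothing about the limit by itself.

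\emph{The missing idea.} The paper chooses the good event differently.
First, an $O(1)$-probability event positions the path so that $\omega\cdot\mathsf X_1-(2+L)\in[1,2]$.
Then it requires $\omega\cdot\mathsf X_a\ge2a+L$ for all $a\ge1$: the path must stay above a straight line of slope $2$, not above the front.
The excess $y_{\rm ex}+2\chi s+\sqrt2\,\beta_s$ never hits $0$ with probability $1-e^{-2\chi y_{\rm ex}}\ge c\chi$, so no ballot estimate is needed.
Since $R_0(1+a)=2(1+a)-k\log(1+a)$, such a path automatically has $r\ge L-2+k\log(1+a)$.
Lemma~\ref{cont:lem:upper-leading-edge} then gives $u\le C_L\log(2+a)(1+a)^{-k}$ along it for large $a$, which is integrable because $k>1$; on the initial unit time, $u\le K_0$ suffices.
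Hence $\int_0^\infty u\,da\le C$ holds deterministically on the good event, and $\mathbb E\exp[-\int_0^\infty u\,da]\ge c\chi$.

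\emph{A repair of your PDE idea.} Work at the single time $\sigma_1=M\chi^{-2}$, and use the subtraction you discarded, but only from $\sigma_1$ on.
With $K=1$, the shell $\sqrt{\sigma_1}/2\le r\le\sqrt{\sigma_1}$ gives $\mathfrak m_{\sigma_1}\ge c\,\chi M^{-1/2}(1+M)^{-k/8}e^{-M+\sqrt M/2}$, using $(1+\sigma_1)^{-k\chi}\ge c(1+M)^{-k/8}$ for $\chi\le1/8$.
Meanwhile \eqref{cont:eq:near-zeroth-moment} gives $\int_{\sigma_1}^\infty\mathfrak n_a\,da\le C\chi M^{-1/2}e^{-M}$.
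Then $\mathfrak a=\mathfrak m_{\sigma_1}-\int_{\sigma_1}^\infty\mathfrak n_a\,da\ge c\chi$ once $M$ is large.
Neither this mechanism nor the paper's appears in your plan.
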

	
	\begin{proof}

		We first prove the gradient bound. By
		Lemma~\ref{cont:lem:near-moments}, differentiation of
		\eqref{cont:eq:limiting-amplitude} under the time integral
		is justified locally in $p$.
		Since \(u_0\) is compactly supported,
		\(|\nabla_p\mathfrak m_0(p)|\le C\)
		uniformly on the near-critical annulus.
		To estimate the integral, split at the fixed time
		\(\sigma_0\) from Lemma~\ref{cont:lem:near-moments}.
		The finite-time bounds in that lemma control
		the contribution from \(0\le a\le\sigma_0\).
		For the remaining part, \eqref{cont:eq:near-first-moment} gives
		\[
		\int_{\sigma_0}^\infty
		|\nabla_p\mathfrak n_a(p)|\,da
		\le C\int_{\sigma_0}^\infty
		(1+\log a+\chi a)a^{-3/2}e^{-\chi^2a}\,da
		\le C.
		\]
		Indeed, the first two terms are integrable uniformly,
		while
		$\chi\int_0^\infty a^{-1/2}e^{-\chi^2a}\,da=\sqrt\pi$.
		Thus $|\nabla_p\mathfrak a(p)|\le C$.

		We next prove the lower bound. The Feynman--Kac
		representation for the weighted mass $\mathfrak m_\sigma(p)$, followed by
		bounded convergence as $\sigma\to\infty$, gives
		\begin{equation}\label{cont:eq:near-probabilistic-lower}
			\mathfrak a(p)
			=\int u_0(y)e^{p\cdot y}\,
			\mathbb E\exp\!\left[
			-\int_0^\infty
			u(a,y+2pa+\sqrt2\,\mathsf B_a)\,da
			\right]dy,
		\end{equation}
		where $\mathsf B_a$ is standard Brownian motion
		with covariance $a\id$.

		Write $p=(1+\chi)\omega$ and
		$\mathsf X_a=y+2pa+\sqrt2\,\mathsf B_a$.
		Fix $L_{\rm prob}>0$.
		For \(y\in\supp u_0\), the Gaussian density and compactness of
		\(\supp u_0\times\Sph\times[1,1+\chi_0]\) give a constant
		\(c_0>0\), independent of \(y,\omega,|p|\), such that
		\[
		\mathbb P\!\left(
		\omega\cdot \mathsf{X}_1-(2+L_{\rm prob})\in[1,2]
		\right)\ge c_0.
		\]
		Condition on this event and write
		\(y_{\rm ex}=\omega\cdot \mathsf{X}_1-(2+L_{\rm prob})\in[1,2]\).
		By the Markov property, the future excess over the line
		\(2a+L_{\rm prob}\) is, for \(s\ge0\),
		\[
		Y_s
		:=\omega\cdot \mathsf{X}_{1+s}
		-\bigl(2(1+s)+L_{\rm prob}\bigr)
		=
		y_{\rm ex}+2\chi s+\sqrt2\,\beta_s,
		\]
		where \(\beta_s\) is a standard one-dimensional Brownian motion.
		Its probability of never hitting zero is
		\(1-e^{-2\chi y_{\rm ex}}\ge c_1\chi\) for some constant \(c_1>0\), uniformly for
		\(y_{\rm ex}\in[1,2]\).

		Consequently, with probability at least $c_0c_1\chi$,
		the path satisfies
		$\omega\cdot\mathsf X_a\ge 2a+L_{\rm prob}$
		for every $a\ge1$.
		We call such paths good paths.
		On these paths, the
		physical front coordinate satisfies
		\(r=|\mathsf X_a|-R_0(1+a)
		\ge L_{\rm prob}-2+k\log(1+a)\).
		Hence Lemma~\ref{cont:lem:upper-leading-edge},
		applied at original physical time $1+a$, gives,
		for all sufficiently large $a$,
		\[
		u(a,\mathsf X_a)\le
		C\bigl(1+L_{\rm prob}+k\log(1+a)\bigr)
		e^{-L_{\rm prob}}(1+a)^{-k}.
		\]
		Since $k>1$, this bound is integrable in $a$.
		On the remaining fixed initial time interval,
		we use $u\le K_0$.
		Thus, on the good-path event,
		\[
		\int_0^\infty u(a,\mathsf X_a)\,da\le C,
		\]
		with $C$ independent of $y\in\supp u_0$,
		$\omega$, and $\chi$.
		Consequently,
		\[
		\mathbb E\exp\!\left[
		-\int_0^\infty u(a,\mathsf X_a)\,da
		\right]\ge c\chi,
		\]
		uniformly for $y\in\supp u_0$.

		Inserting this bound into
		\eqref{cont:eq:near-probabilistic-lower} yields
		\[
		\mathfrak a(p)
		\ge
		c\chi
		\int u_0(y)e^{p\cdot y}\,dy
		=
		c\chi\,\mathfrak m_0(p).
		\]
		Since \(\mathfrak m_0(p)\) has a positive uniform lower bound on
		the compact annulus \(1\le|p|\le1+\chi_0\), we conclude that
		\(\mathfrak a(p)\ge c\chi\). This completes the proof.
	\end{proof}
	
	\subsection{Finite-time point-amplitude estimate}

	The weighted mass \(\mathfrak m_\sigma(p)\) defined in
	\eqref{cont:eq:tilted-mass} tracks nonlinear absorption,
	whereas the Gaussian exterior requires pointwise control
	at the moving point \(x=2\sigma p\).
	We therefore define the pointwise amplitude
	\begin{equation}\label{cont:eq:point-amplitude}
		\mathfrak a_\sigma(p)
		:=\frac{u(\sigma,2\sigma p)}
		{e^\sigma\mathcal K_\sigma(2\sigma p)}
		\qquad(\sigma>0).
	\end{equation}

	\begin{proposition}[Near-critical point-amplitude estimate]
		\label{cont:prop:near-finite-amplitude}
		There are fixed \(\Sigma_{\rm near},C>0\), independent of
		\(0<\chi(p)\le\chi_0\), such that
		\begin{equation}\label{cont:eq:near-finite-amplitude}
			|\mathfrak a_\sigma(p)-\mathfrak a(p)|
			\le\frac{C}{\sigma\chi(p)}
			\qquad(\sigma\ge\Sigma_{\rm near}).
		\end{equation}
		The corresponding relative error is at most
		\(C/(\sigma\chi(p)^2)\).
	\end{proposition}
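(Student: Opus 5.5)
We plan to use the Duhamel representation of \(u\) against the linearized semigroup \(e^{\sigma}\mathcal K_\sigma\), and to compare the resulting pointwise formula with the mass identity \eqref{cont:eq:limiting-amplitude}. Duhamel for \eqref{cont:eq:reset-kpp} gives
\[
u(\sigma,x)=e^\sigma\!\int\mathcal K_\sigma(x-y)u_0(y)\,dy
-\int_0^\sigma e^{\sigma-a}\!\int\mathcal K_{\sigma-a}(x-y)u(a,y)^2\,dy\,da .
\]
At \(x=2\sigma p\) we divide by \(e^\sigma\mathcal K_\sigma(2\sigma p)\). Expanding the Gaussian exponent,
\[
\frac{\mathcal K_{\sigma-a}(2\sigma p-y)}{\mathcal K_\sigma(2\sigma p)}
=\Bigl(\tfrac{\sigma}{\sigma-a}\Bigr)^{N/2}
e^{p\cdot y-|p|^2a}\exp\!\Bigl(-\frac{|y-2ap|^2}{4(\sigma-a)}\Bigr).
\]
Hence the \(a\)-integrand becomes exactly the integrand of \(\mathfrak n_a(p)\), multiplied by a kernel factor \(\Theta_{\sigma,a}(y)=(\sigma/(\sigma-a))^{N/2}\exp(-|y-2ap|^2/(4(\sigma-a)))\). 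This factor tends to \(1\) when \(a\ll\sigma\) and \(|y-2ap|\ll\sqrt\sigma\). The same holds for the initial term with \(a=0\).

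This yields the decomposition
\[
\mathfrak a_\sigma(p)-\mathfrak a(p)=E_0+E_1-E_2,
\]
with
\[
E_0=\int e^{p\cdot y}u_0(y)\bigl(e^{-|y|^2/4\sigma}-1\bigr)dy=O(\sigma^{-1}),
\qquad
E_1=\int_\sigma^\infty\mathfrak n_a\,da .
\]
The term \(E_2\) is the \(a\in(0,\sigma)\) integral of \(e^{-(1+|p|^2)a}\int e^{p\cdot y}u^2(\Theta_{\sigma,a}-1)\). By \eqref{cont:eq:near-zeroth-moment},
\[
E_1\le C\int_\sigma^\infty a^{-3/2}e^{-\chi^2a}\,da\le C\sigma^{-3/2}\chi^{-2}\min(1,\dots)\le C/(\sigma\chi),
\]
using \(\sigma^{-1/2}\chi^{-1}\cdot\sigma^{-1}\chi^{-1}\ldots\). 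More carefully, \(\int_\sigma^\infty a^{-3/2}e^{-\chi^2 a}\,da\le \sigma^{-3/2}\chi^{-2}\), and \(\sigma^{-3/2}\chi^{-2}\le(\sigma\chi)^{-1}\) once \(\sigma\chi^2\ge\)… If instead \(\sigma\chi^2\le1\), the bound \(2\sigma^{-1/2}\le 2/(\sigma\chi)\) holds. So \(E_1\le C/(\sigma\chi)\) in both cases.

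We split \(E_2\) at \(a=\sigma/2\). For \(a\le\sigma/2\) we use \(|\Theta-1|\le C(a/\sigma+|y-2ap|^2/\sigma)\), where the linear term is absent because \(\Theta\le(\sigma/(\sigma-a))^{N/2}\) and \(1-e^{-s}\le s\). This bounds the contribution by \(\sigma^{-1}\int_0^{\sigma/2}(a\,\mathfrak n_a+\mathfrak n_{a,2})\,da\). By \eqref{cont:eq:near-zeroth-moment} and \eqref{cont:eq:near-second-moment}, the integrand is at most \(C(\chi^2a^2+a+\log^2a)a^{-3/2}e^{-\chi^2a}\), whose integral over \((0,\infty)\) is \(O(\chi^{-1})\). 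This comes from \(\int a^{-1/2}e^{-\chi^2a}=\sqrt\pi/\chi\) and \(\chi^2\int a^{1/2}e^{-\chi^2a}=O(\chi^{-1})\). The finite-time bounds in Lemma~\ref{cont:lem:near-moments} cover \(a\le\sigma_0\). Altogether this part is \(O(1/(\sigma\chi))\).

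The range \(\sigma/2\le a\le\sigma\) is the main obstacle. There \(\Theta\) carries the singular factor \((\sigma/(\sigma-a))^{N/2}\), but it is a heat kernel of width \(\sqrt{\sigma-a}\) centered at \(2ap\). We bound its contribution, without subtracting \(1\), by \(\int_{\sigma/2}^\sigma\) of the pointwise quantity \(e^{-(1+|p|^2)a}\sup\{e^{p\cdot y}u(a,y)^2\}\cdot\sigma^{N/2}\int\mathcal K_{\sigma-a}\), that is, by \(\sigma^{N/2}\sup_y e^{p\cdot y-(1+|p|^2)a}u(a,y)^2\). The envelope \eqref{cont:eq:squared-envelope} together with the front factor computation in Lemma~\ref{cont:lem:near-moments} gives this sup as \(O(\sigma^{-k-\dots})\). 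We expect a power \(\sigma^{N/2-(N+2)/2}\cdot e^{-\chi^2 a}\) times polynomial factors; integrating over an interval of length \(\sigma/2\) should give \(O(\sigma^{-1/2}e^{-\chi^2\sigma/2})\), which is \(\le C/(\sigma\chi)\). If this pointwise bookkeeping loses powers, we would first try to keep the Gaussian factor \(e^{-|y-2ap|^2/4(\sigma-a)}\) in the radial/angular integration, exactly as in the proof of \eqref{cont:eq:near-zeroth-moment}. The effect is to replace the spherical integral bound \eqref{cont:eq:spherical-moments} by a Gaussian-localized one.

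Collecting the pieces gives \eqref{cont:eq:near-finite-amplitude}. The relative error statement then follows by dividing by \(\mathfrak a(p)\ge c\chi\) from Lemma~\ref{cont:lem:near-amplitude}.
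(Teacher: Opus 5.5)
\textbf{There is a genuine gap, in the late window $\sigma/2\le a\le\sigma$.} Your decomposition $E_0+E_1-E_2$, your bounds on $E_0$ and $E_1$, and your treatment of the early window $0\le a\le\sigma/2$ are correct and match the paper's argument.

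You rightly call the late window the main obstacle, but you do not close it. Computed correctly, your supremum bound gives, per unit $a$,
$\mathfrak j_\sigma(a,p)\le C\sigma^{N/2}a^{-k}e^{-\chi^2a}\le C\sigma^{-1}e^{-\chi^2\sigma/2}$, since $k=(N+2)/2$. Integrating over a window of length $\sigma/2$ then gives $Ce^{-\chi^2\sigma/2}$, not $C\sigma^{-1/2}e^{-\chi^2\sigma/2}$: the product $\sigma^{-1}\cdot\sigma$ is of order one.

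This is not $O(1/(\sigma\chi))$, because $\sigma\chi\,e^{-\chi^2\sigma/2}$ is unbounded whenever $\chi\sqrt\sigma$ stays bounded. That is exactly the regime $\chi\sqrt\sigma\approx K_{\rm ext}$ used in Section~\ref{cont:sec:exterior}. The loss is structural. The weighted source $e^{p\cdot y}u(a,y)^2$ sits in a radial shell of width $O(1)$ at the front, while $\mathcal K_{\sigma-a}(\cdot-2ap)$ has width $\sqrt{\sigma-a}$. Taking $\sup_y$ therefore overcounts by about $\sqrt{\sigma-a}$. Dropping the Gaussian instead gives $\mathfrak j_\sigma\le(\sigma/(\sigma-a))^{N/2}\mathfrak n_a$, which is not integrable at $a=\sigma$ for $N\ge2$.

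\textbf{What is needed to close it.} Your fallback points the right way, but it is not ``exactly'' the zeroth-moment computation. It needs two extra ingredients and a further split of the window.

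First, after expanding $p\cdot y-|y-2ap|^2/(4(\sigma-a))$, the angular exponent is $b_{\rm sph}\,\omega\cdot\omega'$ with the enlarged parameter $b_{\rm sph}=|p|\sigma\rho/(\sigma-a)$. Applying \eqref{cont:eq:spherical-moments} with this $b_{\rm sph}$ cuts the singular factor $(\sigma/(\sigma-a))^{N/2}$ down to $(\sigma/(\sigma-a))^{1/2}$.

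Second, completing the square in $z_1=\rho-2a$ rewrites the exponent as $-\chi^2a\sigma/(\sigma-a)+\gamma z_1-z_1^2/(4(\sigma-a))$, with $\gamma=1+\chi\sigma/(\sigma-a)$. Once the quadratic term is dropped, the radial integral against $f_{\rm env}(r)\sim(1+r)^2e^{-2r}$ is uniformly bounded only while $\gamma$ stays below $2$ by a fixed margin.

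The paper therefore splits the late window in two:
\begin{itemize}
\item On $\sigma/2\le a\le\sigma-2\chi\sigma$ one has $\gamma\le 3/2$. This gives $\mathfrak j_\sigma\le C(\sigma/(\sigma-a))^{1/2}a^{-3/2}e^{-\chi^2\sigma}$, and hence $\int\mathfrak j_\sigma\,da\le C\sigma^{-1/2}e^{-\chi^2\sigma}$.
\item On the terminal window $\sigma-a<2\chi\sigma$, the paper uses $u(a,y)^2\le Ce^{\frac32R_0(1+a)}e^{-\frac32\omega\cdot y}$ and an explicit Gaussian integral, which gives decay $e^{-3\chi\sigma/8}$.
\end{itemize}
On that terminal window your crude supremum bound would in fact suffice: the window has length $2\chi\sigma$, and $\chi e^{-c\chi^2\sigma}\le C/(\sigma\chi)$.

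Without this split and the $\sigma$-dependent angular exponent, the late-time contribution is not controlled, and the proposal does not prove \eqref{cont:eq:near-finite-amplitude}.
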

	
	\begin{proof}
		Applying Duhamel's formula at $x=2\sigma p$
		and dividing by
		$e^\sigma\mathcal K_\sigma(2\sigma p)$ gives
		\begin{equation}\label{cont:eq:point-amplitude-duhamel}
			\begin{split}
				\mathfrak a_\sigma(p)
				&=\mathfrak m_0^{[\sigma]}(p)
				-\int_0^\sigma\mathfrak j_\sigma(a,p)\,da,\\
				\mathfrak m_0^{[\sigma]}(p)
				&:=\int u_0(y)e^{p\cdot y-|y|^2/(4\sigma)}\,dy.
			\end{split}
		\end{equation}
		Here completion of the square gives
		\begin{equation}\label{cont:eq:point-duhamel-source}
			\mathfrak j_\sigma(a,p)
			:=e^{-(1+|p|^2)a}
			\left(\frac{\sigma}{\sigma-a}\right)^{N/2}
			\int e^{p\cdot y-|y-2ap|^2/[4(\sigma-a)]}
			u(a,y)^2\,dy.
		\end{equation}
		
		\paragraph{Early source times $0\le a\le\sigma/2$.}
		Using
		$(\sigma/(\sigma-a))^{N/2}-1\le Ca/\sigma$
		and
		$1-e^{-|y-2ap|^2/[4(\sigma-a)]}
		\le C|y-2ap|^2/\sigma$, we obtain
		\[
		|\mathfrak j_\sigma(a,p)-\mathfrak n_a(p)|
		\le\frac C\sigma
		\bigl[a\mathfrak n_a(p)+\mathfrak n_{a,2}(p)\bigr].
		\]
		Lemma~\ref{cont:lem:near-moments}, including its
		finite-time bounds, therefore gives
		\begin{equation}\label{cont:eq:near-early-source}
			\int_0^{\sigma/2}
			|\mathfrak j_\sigma(a,p)-\mathfrak n_a(p)|\,da
			\le\frac C{\sigma\chi}.
		\end{equation}
		Indeed, the only potentially nonuniform integrals
		are
		$\int_0^\infty a^{-1/2}e^{-\chi^2a}\,da$
		and
		$\chi^2\int_0^\infty a^{1/2}e^{-\chi^2a}\,da$,
		both $O(\chi^{-1})$.
		Finally, 
		compact support of \(u_0\) gives
		\[
		|\mathfrak m_0^{[\sigma]}(p)-\mathfrak m_0(p)|
		\le C/\sigma
		\]
		uniformly on the near-critical annulus.

		\paragraph{Intermediate source times
			$\sigma/2\le a\le\sigma-2\chi\sigma$.}
		Write $p=|p|\omega$,
		$y=\rho\omega'$, and $r=\rho-R_0(1+a)$.
		Applying \eqref{cont:eq:spherical-moments} with $j=0$
		and \eqref{cont:eq:squared-envelope} to
		\eqref{cont:eq:point-duhamel-source}, we obtain
		\[
		\begin{aligned}
			\mathfrak j_\sigma(a,p)
			&\le C\left(\frac{\sigma}{\sigma-a}\right)^{N/2}
			\int_0^\infty
			\rho^{N-1}(1+b_{\rm sph})^{-d_N}
			e^{E(a,\rho)}f_{\rm env}(r)\,d\rho\\
			&\le C\left(\frac{\sigma}{\sigma-a}\right)^{1/2}
			\int_0^\infty
			\rho^{d_N}e^{E(a,\rho)}f_{\rm env}(r)\,d\rho.
		\end{aligned}
		\]
		Here $b_{\rm sph}=\frac{|p|\sigma\rho}{\sigma-a}$ and
		$E(a,\rho)=-(1+|p|^2)a+b_{\rm sph}
		-\frac{\rho^2+4a^2|p|^2}{4(\sigma-a)}$.
		The second inequality uses
		$b_{\rm sph}\ge\frac{\sigma\rho}{\sigma-a}$
		and $d_N=(N-1)/2$.
		
		Now set $z_1=\rho-2a=r+2-k\log(1+a)$ and
		$\gamma_{\rm late}=1+\frac{\chi\sigma}{\sigma-a}
		\in[1,3/2]$.
		Rearranging the exponent gives
		$E(a,\rho)=-\frac{\chi^2a\sigma}{\sigma-a}
		+\gamma_{\rm late}z_1
		-\frac{z_1^2}{4(\sigma-a)}$.
		Hence
		\[
		\begin{aligned}
			\mathfrak j_\sigma(a,p)
			\le{}
			C\left(\frac{\sigma}{\sigma-a}\right)^{1/2}
			e^{-\frac{\chi^2a\sigma}{\sigma-a}}
			\int_0^\infty
			\rho^{d_N}
			e^{\gamma_{\rm late}z_1-\frac{z_1^2}{4(\sigma-a)}}
			f_{\rm env}(r)\,d\rho.
		\end{aligned}
		\]
		Since $\gamma_{\rm late}\ge1$, we have
		$e^{\gamma_{\rm late}z_1}
		\le C(1+a)^{-k}e^{\gamma_{\rm late}r}$.
		Moreover, 
		since $1\le\gamma_{\rm late}\le3/2$,
		\eqref{cont:eq:squared-envelope} gives
		$e^{\gamma_{\rm late}r}f_{\rm env}(r)\le Ce^r$
		for $r<0$, and
		$e^{\gamma_{\rm late}r}f_{\rm env}(r)
		\le C(1+r)^2e^{-r/2}$ for $r\ge0$.
		Consequently,
		\[
		\int_{-R_0(1+a)}^\infty
		(R_0(1+a)+r)^{d_N}
		e^{\gamma_{\rm late}r}f_{\rm env}(r)\,dr
		\le C(1+a)^{d_N}.
		\]
		Dropping the negative quadratic exponent and
		using $k=d_N+3/2$, we obtain
		\[
		\begin{aligned}
			\mathfrak j_\sigma(a,p)
			\le C\left(\frac{\sigma}{\sigma-a}\right)^{1/2}
			a^{-3/2}
			e^{-\frac{\chi^2a\sigma}{\sigma-a}}.
		\end{aligned}
		\]
		Since $a\sigma/(\sigma-a)\ge\sigma$ for
		$a\ge\sigma/2$,
		we conclude that
		\begin{equation}\label{cont:eq:near-late-middle}
			\int_{\sigma/2}^{\sigma-2\chi\sigma}
			\mathfrak j_\sigma(a,p)\,da
			\le C\sigma^{-1/2}e^{-\chi^2\sigma}.
		\end{equation}

		\paragraph{Terminal source times
			$0<\sigma-a<2\chi\sigma$.}
		By \eqref{cont:eq:squared-envelope}, we have,
		globally in $y$,
		\[
		u(a,y)^2
		\le C e^{-\frac32(|y|-R_0(1+a))}
		\le C e^{\frac32 R_0(1+a)}
		e^{-\frac32\omega\cdot y}.
		\]
		Substituting into \eqref{cont:eq:point-duhamel-source}
		and evaluating the Gaussian integral gives
		\[
		\mathfrak j_\sigma(a,p)
		\le C\sigma^{N/2}(1+a)^{-3k/2}
		\exp\!\left(
		(|p|-3/2)^2\sigma-a/4
		\right).
		\]
		Since $|p|=1+\chi$ and
		$0<\sigma-a<2\chi\sigma$,
		we obtain
		\((|p|-3/2)^2\sigma-a/4 \le -3\chi\sigma/8\),
		where we used $\chi\le\chi_0=1/8$.
		Since $a\ge(1-2\chi)\sigma\ge3\sigma/4$,
		using $k=(N+2)/2$ and the interval length
		$2\chi\sigma$, we obtain
		\begin{equation}\label{cont:eq:near-terminal-source}
			\int_{\sigma-2\chi\sigma}^{\sigma}
			\mathfrak j_\sigma(a,p)\,da
			\le C\sigma^{-N/4-1/2}e^{-3\chi\sigma/8}
			\le C\sigma^{-1/2}e^{-\chi^2\sigma}.
		\end{equation}

		Finally, \eqref{cont:eq:near-zeroth-moment} implies
		\[
		\int_{\sigma/2}^\infty\mathfrak n_a(p)\,da
		\le C\sigma^{-1/2}e^{-\chi^2\sigma/2}.
		\]
		Combining \eqref{cont:eq:near-early-source},
		\eqref{cont:eq:near-late-middle},
		\eqref{cont:eq:near-terminal-source},
		and the initial amplitude error, the elementary bound
		$\sigma^{-1/2}e^{-\chi^2\sigma/2}
		\le C/(\sigma\chi)$
		proves \eqref{cont:eq:near-finite-amplitude}.
		Division by the
		lower bound in \eqref{cont:eq:near-amplitude-bounds} gives the
		relative estimate.
	\end{proof}
	
	\section{Uniform amplitude estimates away from the critical speed}
	\label{cont:sec:fixed-gap}
	
	In this section, we retain the notation of Section~\ref{cont:sec:near-critical}:
	the weighted masses \(\mathfrak m_\sigma\), the centered absorption moments
	\(\mathfrak n_{\sigma,j}\), the limiting amplitude \(\mathfrak a\), and
	the point amplitude \(\mathfrak a_\sigma\).
	Fix \(\chi_{\min}>0\). Throughout this section,
	all estimates are uniform for 
	\(|p|\ge1+\chi_{\min}\).

	The near-critical analysis of
	Section~\ref{cont:sec:near-critical} allows
	\(\chi=|p|-1\downarrow0\), so that the radial
	separation from the critical-speed radius,
	\(|x|-2\sigma=2\chi\sigma\), may be comparable
	to the diffusive scale \(\sqrt{\sigma}\).
	In the present regime, the fixed positive gap
	\(\chi\ge\chi_{\min}\) instead gives
	\(2\chi\sigma\ge2\chi_{\min}\sigma\gg\sqrt{\sigma}\) as \(\sigma\to\infty\).
	At the same time, \(|p|\) is unbounded, so the
	weighted-moment argument of
	Section~\ref{cont:sec:near-critical},
	which exploits \(2-|p|\ge1-\chi_0>0\),
	does not apply uniformly in this regime.
	We therefore use the moving frame centered at \(2\sigma p\) to establish analogous amplitude estimates to those in Section~\ref{cont:sec:near-critical}, uniformly for \(|p|\ge 1+\chi_{\min}\).

	\subsection{Fixed-gap Gaussian separation and relative centered moments}
	
	Motivated by the identity
	\[
	e^{p\cdot x-|p|^2\sigma}\mathcal K_\sigma(x)
	=\mathcal K_\sigma(x-2\sigma p),
	\]
	we introduce the moving coordinate
	\(x=2\sigma p+z\). In this frame, the renormalized exponential weight
	becomes
	\(e^{-(1+|p|^2)\sigma+p\cdot x}
	=e^{(|p|^2-1)\sigma+p\cdot z}\).
	Accordingly, for each fixed \(p\), define
	\begin{equation}\label{cont:eq:unnormalized-galilean}
		\begin{split}
			\mathcal U_p(\sigma,z)
			&=e^{(|p|^2-1)\sigma+p\cdot z}
			u(\sigma,2p\sigma+z),\\
			\mathcal V_p(\sigma,z)
			&=u(\sigma,2p\sigma+z),\qquad
			f_p(z)=e^{p\cdot z}u_0(z).
		\end{split}
	\end{equation}
	Using \(u_\sigma=\Delta_xu+u-u^2\), a direct computation gives
	\begin{equation}\label{cont:eq:unnormalized-pde}
		(\partial_\sigma-\Delta_z)\mathcal U_p
		=-\mathcal V_p\mathcal U_p,\qquad
		\mathcal U_p(0,z)=f_p(z).
	\end{equation}
	Thus \(\mathcal U_p\) satisfies the heat equation
	with the nonnegative absorption coefficient \(\mathcal V_p\).
	
	The free heat evolution is
	\begin{equation}\label{cont:eq:free-heat}
		P_p(\sigma,z):=(\mathcal K_\sigma*f_p)(z).
	\end{equation}
	Since \(0\le\mathcal V_p\le K_0\), comparison gives
	\begin{equation}\label{cont:eq:free-comparison}
		0<e^{-K_0\sigma}P_p\le\mathcal U_p\le P_p
		\quad(\sigma>0),\qquad
		\int P_p(\sigma,z)\,dz=\mathfrak m_0(p).
	\end{equation}
	The change of variables \(x=2p\sigma+z\) gives
	\begin{equation}\label{cont:eq:mass-galilean}
		\int\mathcal U_p(\sigma,z)\,dz=\mathfrak m_\sigma(p),\qquad
		\int\mathcal V_p(\sigma,z)\mathcal U_p(\sigma,z)\,dz
		=\mathfrak n_\sigma(p).
	\end{equation}

	\begin{lemma}[Uniform Gaussian separation in the fixed-gap regime]
		\label{cont:lem:gaussian-separation}
		There exist \(C,c>0\) such that, for
		\(|p|\ge1+\chi_{\min}\), \(|y|\le R_*\),
		\(0<s\le\sigma\), and \(j=0,1,2\),
		\begin{equation}\label{cont:eq:gaussian-separation}
			\int\mathcal K_s(z)
			u(\sigma,2\sigma p+y+z)|y+z|^j\,dz
			\le C(1+\sigma^{j/2})e^{-c\sigma}.
		\end{equation}
		The estimate is uniform for \(0<s\le\sigma\), in particular as
		\(s\downarrow0\).
	\end{lemma}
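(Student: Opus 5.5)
The idea is to bound the integrand pointwise by a global upper envelope for \(u(\sigma,\cdot)\) that decays exponentially in the distance ahead of the critical sphere \(|x|=2\sigma\). One then checks that the Gaussian average, centered at a point \(2\sigma p+y\) with \(|p|\ge 1+\chi_{\min}\), only sees this envelope at distance of order \(\chi_{\min}\sigma\) beyond the front, except for a Gaussian tail.

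The first step is a uniform envelope valid for all \(\sigma\) (not only large). By \eqref{cont:eq:heat-comparison}, \(u(\sigma,x)\le C e^{\sigma}\int\mathcal K_\sigma(x-y)u_0(y)\,dy\). Since \(\supp u_0\subset B_{R_*}\), this gives
\[
u(\sigma,x)\le C\sigma^{-N/2}\exp\bigl(\sigma-(|x|-R_*)_+^2/(4\sigma)\bigr).
\]
On the set \(|x|\ge 2\sigma(1+\chi_{\min}/2)\) the exponent is at most \(\sigma-(1+\chi_{\min}/2)^2\sigma+O(1)\le -c_1\sigma-c_1|x|+C\), using the quadratic growth in \(|x|\). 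Combined with \(u\le K_0\), we obtain
\[
u(\sigma,x)\le C\min\{1,\sigma^{-N/2}\}\,e^{-c_1\sigma}e^{-c_1(|x|-2\sigma)_+}\quad\text{when }|x|\ge 2\sigma+\chi_{\min}\sigma.
\]
For small \(\sigma\) the prefactor \(\sigma^{-N/2}\) is harmless. The reason is that \(|x|-R_*\ge 2\sigma\chi_{\min}\)-type separation fails there, so instead we simply use \(u\le K_0\) and absorb the bounded range \(\sigma\le 1\) into the constant, where \(e^{-c\sigma}\) is bounded below.

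Next, split the \(z\)-integral at \(|z|\le \chi_{\min}\sigma/2\). On this ball, \(|2\sigma p+y+z|\ge 2\sigma(1+\chi_{\min})-R_*-\chi_{\min}\sigma/2\ge 2\sigma+\chi_{\min}\sigma\) for \(\sigma\) large. The envelope then gives \(u\le Ce^{-c_1\sigma}\), and \(|y+z|^j\le C(1+\sigma^j)\). Using \(\int\mathcal K_s(z)|z|^j\,dz\le Cs^{j/2}\le C\sigma^{j/2}\), this part is at most \(C(1+\sigma^{j/2})e^{-c_1\sigma}\). To avoid the cruder \(\sigma^j\) factor, we bound \(|y+z|^j\le C(1+|z|^j)\) directly. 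On the complement \(|z|\ge\chi_{\min}\sigma/2\), we use \(u\le K_0\) and the Gaussian tail \(\int_{|z|\ge \chi_{\min}\sigma/2}\mathcal K_s(z)(1+|z|^j)\,dz\le C e^{-\chi_{\min}^2\sigma^2/(32 s)}(1+\sigma^j)\). Since \(s\le\sigma\), this is at most \(Ce^{-c\sigma}\), because the polynomial factor is absorbed into the exponent. This tail bound is uniform as \(s\downarrow 0\), since it only improves then.

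The only delicate point is uniformity in unbounded \(|p|\). It is handled by the fact that the envelope improves with \(|x|\), so the first region never needs \(|p|\) bounded. One also needs to settle the constants for the bounded range of \(\sigma\), where the inequality \(2\sigma\chi_{\min}\ge \chi_{\min}\sigma/2+R_*\) fails. There we simply use \(u\le K_0\) and the Gaussian moment bound \(\int\mathcal K_s(1+|z|^j)\,dz\le C\), and enlarge \(C\) to cover it.
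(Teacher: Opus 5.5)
Your proof is correct and follows essentially the same argument as the paper. You split into two parts. Where the displacement satisfies $|z|\gtrsim\chi_{\min}\sigma$, you use $u\le K_0$ and the tail of $\mathcal K_s$ with $s\le\sigma$. Where $2\sigma p+y+z$ lies at least $\chi_{\min}\sigma$ beyond the critical sphere, you use the heat-kernel comparison envelope, uniformly in $|p|$. Bounded $\sigma$ is absorbed into the constant. The differences from the paper are cosmetic: it splits at $|x|=(2+\chi_{\min})\sigma$ rather than at $|z|=\chi_{\min}\sigma/2$, and it keeps the factor $1+\sigma^{j/2}$ through $\mathcal K_s\mathbf 1_{\{|z|\ge\chi_{\min}\sigma/2\}}\le 2^{N/2}e^{-c\sigma}\mathcal K_{2s}$ instead of absorbing polynomial factors into a smaller $c$.
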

	
	\begin{proof}
		Take $c=\chi_{\min}^2/32$. It suffices to consider
		$\sigma\ge\max\{1,2R_*/\chi_{\min}\}$, since
		for bounded \(\sigma\), the result follows from \(u\le K_0\) after
		enlarging \(C\).
		Set $x=2\sigma p+y+z$ and split the integral at
		$|x|=(2+\chi_{\min})\sigma$.
		In the inner region,
		\(|z|
		\ge|2\sigma p|-|y|-|x|
		\ge {\chi_{\min}\sigma}/{2}\).
		Since $s\le\sigma$, we obtain
		\[
		\mathcal K_s(z)
		\mathbf1_{\{|z|\ge\chi_{\min}\sigma/2\}}
		\le 2^{N/2}e^{-c\sigma}\mathcal K_{2s}(z).
		\]
		In the outer region,
		$|x|-R_*\ge(2+\chi_{\min}/2)\sigma$.
		Thus \eqref{cont:eq:heat-comparison} gives
		\[
		\begin{aligned}
			u(\sigma,x)
			&\le (4\pi\sigma)^{-N/2}\|u_0\|_1
			\exp\!\left(\sigma-
			\frac{(|x|-R_*)^2}{4\sigma}\right)\\
			&\le C e^{-(\chi_{\min}/2+
				\chi_{\min}^2/16)\sigma}
			\le Ce^{-c\sigma}.
		\end{aligned}
		\]
		Finally, for $\theta=1,2$, the Gaussian moment bound gives
		\[
		\int\mathcal K_{\theta s}(z)|y+z|^j\,dz
		\le C(1+s^{j/2})
		\le C(1+\sigma^{j/2}).
		\]
		Combining this with the estimates in the two regions
		proves \eqref{cont:eq:gaussian-separation}.
	\end{proof}

	\begin{lemma}[Relative centered absorption moments]
		\label{cont:lem:fixed-gap-moments}
		For \(j=0,1,2\), \(\sigma>0\), and
		\(|p|\ge1+\chi_{\min}\),
		\begin{equation}\label{cont:eq:fixed-gap-moments}
			\mathfrak n_{\sigma,j}(p)
			\le
			C\mathfrak m_0(p)(1+\sigma^{j/2})e^{-c\sigma}.
		\end{equation}
		Moreover, there exists \(C_{\rm abs}>0\) such that
		\begin{equation}\label{cont:eq:free-absorption}
			\begin{split}
				\int\mathcal V_p(\sigma,z)P_p(\sigma,z)\,dz
				&\le
				Ce^{-c\sigma}\mathfrak m_0(p),\\
				\int_0^\infty\!\!\int
				\mathcal V_p(a,z)P_p(a,z)\,dz\,da
				&\le
				C_{\rm abs}\mathfrak m_0(p).
			\end{split}
		\end{equation}
	\end{lemma}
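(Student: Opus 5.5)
The plan is to express each absorption moment in the moving frame and to bound it by the free heat evolution \(P_p\). By \eqref{cont:eq:mass-galilean}, with \(x=2\sigma p+z\) and \(y-2\sigma p=z\), we have
\[
\mathfrak n_{\sigma,j}(p)=\int \mathcal V_p(\sigma,z)\,\mathcal U_p(\sigma,z)\,|z|^j\,dz .
\]
Using \(\mathcal U_p\le P_p\) from \eqref{cont:eq:free-comparison}, this becomes
\[
\mathfrak n_{\sigma,j}(p)\le\int \mathcal V_p P_p|z|^j\,dz .
\]
So it suffices to bound \(\int \mathcal V_p(\sigma,z)P_p(\sigma,z)|z|^j\,dz\) by \(C\mathfrak m_0(p)(1+\sigma^{j/2})e^{-c\sigma}\). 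The \(j=0\) case is then the first line of \eqref{cont:eq:free-absorption}.

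For this bound, write \(P_p(\sigma,z)=\int \mathcal K_\sigma(z-y)f_p(y)\,dy\), where \(f_p\ge0\) is supported in \(|y|\le R_*\). Apply Fubini and substitute \(z=y+w\). The inner integral becomes
\[
\int \mathcal K_\sigma(w)\,u(\sigma,2\sigma p+y+w)\,|y+w|^j\,dw .
\]
This is exactly the quantity in Lemma~\ref{cont:lem:gaussian-separation} with \(s=\sigma\). That lemma bounds it by \(C(1+\sigma^{j/2})e^{-c\sigma}\), uniformly in \(|y|\le R_*\) and \(|p|\ge1+\chi_{\min}\). Integrating against \(f_p(y)\,dy\) and using \(\int f_p=\mathfrak m_0(p)\) gives \eqref{cont:eq:fixed-gap-moments} for all \(\sigma>0\). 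The small-\(\sigma\) range is covered because that lemma is stated uniformly for all \(\sigma\).

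The time-integrated bound in \eqref{cont:eq:free-absorption} follows by integrating \(Ce^{-c a}\mathfrak m_0(p)\) over \(a\in(0,\infty)\). This gives \(C_{\rm abs}=C/c\).

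The one delicate point is the Fubini step, and in particular the requirement that the moment weight is \(|z|^j\) with \(z=y+w\). This is why Lemma~\ref{cont:lem:gaussian-separation} is stated with \(|y+z|^j\) rather than \(|z|^j\). All integrands are nonnegative, so Tonelli applies and no integrability issue arises.

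The substantive estimate is therefore already contained in the separation lemma. Its key point is that the bound is uniform in unbounded \(|p|\) once the exponential weight has been absorbed into \(f_p\), so that it appears only through \(\mathfrak m_0(p)\).
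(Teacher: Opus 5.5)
Your proposal is correct and is essentially the paper's proof. The paper applies the heat-kernel comparison to one factor of $u^2$ and uses the identity $e^{-|p|^2\sigma+p\cdot(x-y)}\mathcal K_\sigma(x-y)=\mathcal K_\sigma(x-y-2\sigma p)$, which is exactly your step $\mathcal U_p\le P_p$ followed by Tonelli and the substitution $z=y+w$; it then invokes Lemma~\ref{cont:lem:gaussian-separation} with $s=\sigma$ for both the moments and $\int\mathcal V_pP_p\,dz$, and integrates $e^{-ca}$ in time, as you do.
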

	
	\begin{proof}
		Applying \eqref{cont:eq:heat-comparison} to one
		factor of $u^2$ and using
		\[
		e^{-|p|^2\sigma+p\cdot(x-y)}
		\mathcal K_\sigma(x-y)
		=\mathcal K_\sigma(x-y-2\sigma p),
		\]
		we obtain, after changing variables,
		\[
		\begin{aligned}
			\mathfrak n_{\sigma,j}(p)
			&\le\int e^{p\cdot y}u_0(y)
			\int\mathcal K_\sigma(z)
			u(\sigma,2\sigma p+y+z)|y+z|^j\,dz\,dy\\
			&\le C\mathfrak m_0(p)
			(1+\sigma^{j/2})e^{-c\sigma},
		\end{aligned}
		\]
		by Lemma~\ref{cont:lem:gaussian-separation} with \(s=\sigma\).
		
		Similarly, the definition of $P_p$ gives
		\[
		\begin{aligned}
			\int\mathcal V_pP_p\,dz
			&=\int e^{p\cdot y}u_0(y)
			\int\mathcal K_\sigma(z)
			u(\sigma,2\sigma p+y+z)\,dz\,dy\\
			&\le Ce^{-c\sigma}\mathfrak m_0(p),
		\end{aligned}
		\]
		using the same lemma with $j=0$.
		Integration over $\sigma>0$ proves
		\eqref{cont:eq:free-absorption}.
	\end{proof}

	\subsection{Lower bound and relative regularity for the limiting amplitude}
	
	First, we establish a uniform fixed-gap lower bound for \(\mathfrak a(p)\) relative to \(\mathfrak m_0(p)\). Whereas Lemma~\ref{cont:lem:near-amplitude} obtains \(\mathfrak a(p)\ge c\chi\,\mathfrak m_0(p)\) in the near-critical regime using a Feynman--Kac argument, here we use \eqref{cont:eq:free-absorption} to construct a positive subsolution of the transformed equation.

	\begin{lemma}[Fixed-gap lower bound for the limiting amplitude]
		\label{cont:lem:fixed-gap-mass}
		For every fixed \(\chi_{\min}>0\), there is
		\(c_{\rm mass}>0\) such that
		\begin{equation}\label{cont:eq:fixed-gap-positive-mass}
			c_{\rm mass}\mathfrak m_0(p)
			\le\mathfrak a(p)\le\mathfrak m_0(p)
			\qquad(|p|\ge1+\chi_{\min}).
		\end{equation}
		The same lower bound holds for \(\mathfrak m_\sigma(p)\) at
		every \(\sigma\ge0\).
	\end{lemma}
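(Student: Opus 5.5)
The upper bound \(\mathfrak a(p)\le\mathfrak m_\sigma(p)\le\mathfrak m_0(p)\) is immediate from \eqref{cont:eq:mass-derivative}, since \(\mathfrak m_\sigma\) is nonincreasing with limit \(\mathfrak a\). It therefore suffices to bound \(\mathfrak a(p)\) from below; the bound for \(\mathfrak m_\sigma(p)\) follows, because \(\mathfrak m_\sigma(p)\ge\mathfrak a(p)\) for every \(\sigma\).

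The plan is to use \eqref{cont:eq:unnormalized-pde} and \eqref{cont:eq:free-absorption}, as announced before the statement. I will seek a subsolution of the form
\[
\underline{\mathcal U}=e^{-\Lambda(\sigma,z)}P_p(\sigma,z),
\]
or more simply compare \(\mathcal U_p\) with \(P_p-\mathcal W\), where \(\mathcal W\) solves
\[
(\partial_\sigma-\Delta_z)\mathcal W=\mathcal V_pP_p,\qquad \mathcal W(0)=0.
\]
Indeed, \(P_p-\mathcal W\) satisfies \((\partial_\sigma-\Delta)(P_p-\mathcal W)=-\mathcal V_pP_p\le-\mathcal V_p\mathcal U_p\), using \(\mathcal U_p\le P_p\). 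This is the wrong direction for a subsolution, so the naive linear version gives only \(\int\mathcal U_p\ge\mathfrak m_0-\int\!\!\int\mathcal V_pP_p\ge(1-C_{\rm abs})\mathfrak m_0\). That conclusion is useful only if \(C_{\rm abs}<1\).

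I would get this directly from the mass identity instead. By \eqref{cont:eq:mass-galilean} and \(\mathcal U_p\le P_p\),
\[
\mathfrak n_\sigma(p)=\int\mathcal V_p\mathcal U_p\le\int\mathcal V_pP_p,
\]
hence
\[
\mathfrak a(p)\ge\mathfrak m_0(p)-\int_0^\infty\!\!\int\mathcal V_pP_p\,dz\,da .
\]
To force the loss below \(\mathfrak m_0(p)/2\), I split time at a large fixed \(\sigma_1\). For \(a\ge\sigma_1\), \eqref{cont:eq:free-absorption} gives a tail of at most \(C e^{-c\sigma_1}\mathfrak m_0(p)/c\), which is below \(\mathfrak m_0/4\) once \(\sigma_1\) is large.

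On \([0,\sigma_1]\), the mass loss cannot simply be bounded this way. Instead I use the multiplicative comparison \(\mathcal U_p\ge e^{-K_0\sigma}P_p\) from \eqref{cont:eq:free-comparison}, which gives \(\mathfrak m_{\sigma_1}(p)\ge e^{-K_0\sigma_1}\mathfrak m_0(p)\). I then restart the argument from time \(\sigma_1\). The semigroup property gives \(\mathcal U_p(\sigma)\ge\mathcal U_p(\sigma_1)*\)(heat)\(-\)(absorption), and for \(a\ge\sigma_1\) the absorption is at most
\[
\int_{\sigma_1}^\infty\!\!\int\mathcal V_pP_p\le Ce^{-c\sigma_1}\mathfrak m_0(p).
\]
This yields
\[
\mathfrak a(p)\ge\mathfrak m_{\sigma_1}(p)-Ce^{-c\sigma_1}\mathfrak m_0(p)\ge\bigl(e^{-K_0\sigma_1}-Ce^{-c\sigma_1}\bigr)\mathfrak m_0(p).
\]

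\textbf{Main obstacle.} This last inequality fails when \(K_0\ge c\), which is always the case here since \(c=\chi_{\min}^2/32\) is small. The linear-in-\(P_p\) decay therefore cannot beat the early exponential loss. The fix I would try is to replace additive comparison by the multiplicative subsolution
\[
\underline{\mathcal U}=P_p\exp\Bigl(-\int_0^\sigma\theta(a)\,da\Bigr).
\]
Alternatively, one can use a Feynman--Kac bound, \(\mathfrak a(p)=\int f_p(y)\,\mathbb E\exp(-\int_0^\infty\mathcal V_p(a,y+\sqrt2\mathsf B_a)\,da)\,dy\), followed by Jensen's inequality:
\[
\mathfrak a\ge\mathfrak m_0\exp\!\Bigl(-\frac{1}{\mathfrak m_0}\int f_p(y)\,\mathbb E\!\int_0^\infty\mathcal V_p\,da\,dy\Bigr)=\mathfrak m_0\exp\!\Bigl(-\frac{1}{\mathfrak m_0}\int_0^\infty\!\!\int\mathcal V_pP_p\Bigr)\ge e^{-C_{\rm abs}}\mathfrak m_0(p).
\]
Jensen's inequality here is applied with the probability measure \(f_p(y)\,dy\,d\mathbb P/\mathfrak m_0\). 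The middle identity holds because \(\mathbb E\,\mathcal V_p(a,y+\sqrt2\mathsf B_a)\) integrated against \(f_p\) is exactly \(\int\mathcal V_pP_p\,dz\). This gives \(c_{\rm mass}=e^{-C_{\rm abs}}\) directly and avoids the time splitting, so this Jensen route is the one I would commit to. The only remaining check is justifying the Feynman--Kac representation for the linear equation \eqref{cont:eq:unnormalized-pde} with bounded potential and the limit \(\sigma\to\infty\), done as in \eqref{cont:eq:near-probabilistic-lower}.
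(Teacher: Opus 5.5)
Your Jensen argument is correct and gives the same constant \(c_{\rm mass}=e^{-C_{\rm abs}}\) as the paper, but by a different route.

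The paper's proof is analytic. Let \(E_p\) solve \((\partial_\sigma-\Delta_z)E_p=\mathcal V_pP_p\), \(E_p(0)=0\); this is your \(\mathcal W\). The paper shows that \(\underline P_p=P_pe^{-E_p/P_p}\) satisfies \((\partial_\sigma-\Delta_z+\mathcal V_p)\underline P_p=-\underline P_p|\nabla_z(E_p/P_p)|^2\le0\). It compares \(\underline P_p\) with \(\mathcal U_p\) starting from a time \(\delta>0\), because \(f_p\) vanishes off \(\supp u_0\). Only then does it apply Jensen in space, with density \(P_p(\sigma)/\mathfrak m_0(p)\).

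Your single Jensen on path space merges these two steps. The paper's pointwise barrier is exactly Jensen applied conditionally on the endpoint \(z\) of the path, since the conditional expectation of \(\int_0^\sigma\mathcal V_p\) given that endpoint equals \(E_p/P_p\). Its spatial Jensen then averages over endpoints.

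The paper's version gives a self-contained comparison proof and a pointwise lower bound \(\mathcal U_p\ge\underline P_p\), not just a mass bound. Yours is shorter and avoids both the quotient equation and the \(\delta\)-regularization. The price is reliance on the integrated Feynman--Kac formula.

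When you write it out, note that the pointwise formula \(\mathcal U_p(\sigma,z)=\mathbb E\bigl[f_p(z+\sqrt2\mathsf B_\sigma)\exp\bigl(-\int_0^\sigma\mathcal V_p(\sigma-a,z+\sqrt2\mathsf B_a)\,da\bigr)\bigr]\) only becomes your forward mass formula after two further steps. You must integrate in \(z\), then use time reversal of Brownian motion together with translation invariance of Lebesgue measure. It is cleanest to apply Jensen at each finite \(\sigma\), which gives the bound for \(\mathfrak m_\sigma(p)\) directly, and then let \(\sigma\to\infty\).

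A minor correction to your exploratory part: \(P_p-\mathcal W\) is in fact a subsolution, since its residual is \(-\mathcal V_p\mathcal W\le0\). It is simply too weak, yielding only \((1-C_{\rm abs})\mathfrak m_0(p)\). The paper's barrier dominates it because \(e^{-x}\ge1-x\). That convexity is what turns the additive loss \(C_{\rm abs}\) into the multiplicative factor \(e^{-C_{\rm abs}}\).
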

	
	\begin{proof}

		Define
		\begin{equation}\label{cont:eq:accumulated-absorption}
			E_p(\sigma,z)
			:=\int_0^\sigma
			\mathcal K_{\sigma-a}*
			\bigl(\mathcal V_p(a,\cdot)P_p(a,\cdot)\bigr)(z)\,da.
		\end{equation}
		Then $(\partial_\sigma-\Delta_z)E_p=\mathcal V_pP_p$
		and $E_p(0,z)=0$. Since $0\le\mathcal V_p\le K_0$
		and $\mathcal K_{\sigma-a}*P_p(a)=P_p(\sigma)$,
		we have $0\le E_p\le K_0\sigma P_p$.
		For $\sigma>0$, $P_p(\sigma,z)>0$. Set
		\begin{equation}\label{cont:eq:pde-subsolution}
			\Theta_p=\frac{E_p}{P_p},\qquad
			\underline P_p=P_pe^{-\Theta_p}.
		\end{equation}
		Since $P_p$ solves the free heat equation,
		the
		quotient \(\Theta_p\) satisfies
		\[
		(\partial_\sigma-\Delta_z)\Theta_p
		-2\nabla_z\log P_p\cdot\nabla_z\Theta_p
		=\mathcal V_p.
		\]
		Consequently,
		\begin{equation}\label{cont:eq:subsolution-residual}
			(\partial_\sigma-\Delta_z+\mathcal V_p)\underline P_p
			=-\underline P_p|\nabla_z\Theta_p|^2\le0.
		\end{equation}
		
		Since $f_p=P_p(0)$ may vanish, we start comparison
		at $\sigma=\delta>0$.
		The bound $0\le\Theta_p\le K_0\sigma$ gives
		$e^{-K_0\sigma}P_p\le\underline P_p\le P_p$.
		For each fixed $p$, both $\underline P_p(\delta)$
		and $\mathcal U_p(\delta)$ converge uniformly to
		$f_p$ as $\delta\downarrow0$. Thus
		$\varepsilon_\delta:=
		\|\underline P_p(\delta)-\mathcal U_p(\delta)\|_\infty
		\to0$.
		Since $\mathcal V_p\ge0$,
		$\mathcal U_p+\varepsilon_\delta$ is a supersolution.
		Whole-space comparison on finite time intervals,
		followed by $\delta\downarrow0$, yields
		\begin{equation}\label{cont:eq:subsolution-comparison}
			\mathcal U_p(\sigma,z)\ge\underline P_p(\sigma,z).
		\end{equation}

		We now pass from the subsolution to a positive mass fraction.
		Heat propagation preserves mass, so
		\eqref{cont:eq:free-absorption} gives
		\[
		\int E_p(\sigma,z)\,dz
		=\int_0^\sigma\!\!\int
		\mathcal V_p(a,z)P_p(a,z)\,dz\,da
		\le C_{\rm abs}\mathfrak m_0(p).
		\]
		Since $\int P_p(\sigma,z)\,dz=\mathfrak m_0(p)>0$,
		Jensen's inequality with probability density
		$P_p/\mathfrak m_0(p)$ gives
		\[
		\begin{aligned}
			\int\underline P_p\,dz
			=\int P_pe^{-E_p/P_p}\,dz
			\ge\mathfrak m_0(p)
			\exp\left(-\frac{\int E_p\,dz}{\mathfrak m_0(p)}\right)
			\ge e^{-C_{\rm abs}}\mathfrak m_0(p).
		\end{aligned}
		\]
		Combining this with \eqref{cont:eq:mass-galilean} and
		\eqref{cont:eq:subsolution-comparison} yields
		\(\mathfrak m_\sigma(p)\ge
		\mathfrak m_0(p)e^{-C_{\rm abs}}\).
		Take \(c_{\rm mass}=e^{-C_{\rm abs}}>0\). Then we obtain the lower bound by letting
		\(\sigma\to\infty\). The upper bound follows from
		\eqref{cont:eq:mass-derivative}.
	\end{proof}

	While Section~\ref{cont:sec:near-critical} gives an absolute gradient bound for \(\mathfrak a\) on the bounded near-critical annulus, here we establish a uniform relative gradient bound, namely a bound on \(|\nabla_p\log\mathfrak a|\), for \(|p|\ge1+\chi_{\min}\), using the first centered-moment estimate and the amplitude lower bound.

	\begin{lemma}[Relative regularity of the limiting amplitude]
		\label{cont:lem:neighboring-velocities}
		The limiting amplitude belongs to \(C^1(\{|p|>1\})\).
		For every fixed \(\chi_{\min}>0\),
		\begin{equation}\label{cont:eq:relative-amplitude-gradient}
			|\nabla_p\log\mathfrak a(p)|\le C
			\qquad(|p|\ge1+\chi_{\min}).
		\end{equation}
		If \(\chi(p),\chi(p')\ge\chi_{\min}\) and
		\(|p'-p|\le\chi_{\min}/2\), then
		\begin{equation}\label{cont:eq:neighboring-amplitude}
			\left|
			\frac{\mathfrak a(p')}{\mathfrak a(p)}-1
			\right|
			\le C|p'-p|.
		\end{equation}
		The vectors need not have the same direction.
	\end{lemma}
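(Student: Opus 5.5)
We differentiate the representation \eqref{cont:eq:limiting-amplitude}, \(\mathfrak a(p)=\mathfrak m_0(p)-\int_0^\infty\mathfrak n_a(p)\,da\), under the time integral. As in \eqref{cont:eq:gradient-absorption-identity},
\[
\nabla_p\mathfrak n_a(p)=e^{-(1+|p|^2)a}\int(y-2ap)e^{p\cdot y}u(a,y)^2\,dy .
\]
This gives \(|\nabla_p\mathfrak n_a(p)|\le\mathfrak n_{a,1}(p)\). Lemma~\ref{cont:lem:fixed-gap-moments} with \(j=1\) then yields
\[
|\nabla_p\mathfrak n_a(p)|\le C\mathfrak m_0(p)(1+a^{1/2})e^{-ca}.
\]
This bound is integrable in \(a\), with constants uniform for \(|p|\ge1+\chi_{\min}\). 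Moreover \(\mathfrak m_0\) is locally bounded, so dominated convergence justifies differentiation under the integral locally uniformly in \(p\). The same dominating bounds, applied to the continuous integrands \(p\mapsto\nabla_p\mathfrak n_a(p)\), show that \(\nabla_p\mathfrak a\) is continuous. Since \(\chi_{\min}>0\) is arbitrary, this proves \(\mathfrak a\in C^1(\{|p|>1\})\).

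For the relative gradient bound, compact support of \(u_0\) in \(B_{R_*}\) gives \(|\nabla_p\mathfrak m_0(p)|=|\int y\,e^{p\cdot y}u_0\,dy|\le R_*\mathfrak m_0(p)\). Combining this with the integrated moment bound, we get
\[
|\nabla_p\mathfrak a(p)|\le\Bigl(R_*+C\int_0^\infty(1+a^{1/2})e^{-ca}\,da\Bigr)\mathfrak m_0(p)\le C\mathfrak m_0(p).
\]
Dividing by the lower bound \(\mathfrak a\ge c_{\rm mass}\mathfrak m_0\) of Lemma~\ref{cont:lem:fixed-gap-mass} gives \eqref{cont:eq:relative-amplitude-gradient}.

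For \eqref{cont:eq:neighboring-amplitude}, we integrate \(\nabla_p\log\mathfrak a\) along a path from \(p\) to \(p'\). The main obstacle is that the straight segment \([p,p']\) may enter the region \(|q|<1+\chi_{\min}\) where the relative bound was not established; indeed it may dip to \(|q|\ge 1+\chi_{\min}-\chi_{\min}/2\) only in the worst case, and not much below. The cleanest fix is to apply the first part with \(\chi_{\min}/2\) in place of \(\chi_{\min}\). Every point \(q\) on the segment satisfies \(|q|\ge|p|-|p'-p|\ge1+\chi_{\min}/2\), so there \(|\nabla\log\mathfrak a(q)|\le C\), with \(C\) depending only on \(\chi_{\min}\). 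Hence \(|\log\mathfrak a(p')-\log\mathfrak a(p)|\le C|p'-p|\le C\chi_{\min}/2\). The elementary bound \(|e^x-1|\le e^{|x|}|x|\) then gives \(|\mathfrak a(p')/\mathfrak a(p)-1|\le C|p'-p|\) with a constant uniform in the directions of \(p,p'\). Positivity of \(\mathfrak a\) along the segment, needed for the logarithm, comes from the same lower bound \(\mathfrak a\ge c_{\rm mass}\mathfrak m_0>0\).
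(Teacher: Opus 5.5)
Your proposal is correct and follows essentially the same route as the paper. Both arguments bound $|\nabla_p\mathfrak n_a|$ by the first centered moment $\mathfrak n_{a,1}$, use Lemma~\ref{cont:lem:fixed-gap-moments} to justify differentiating under the time integral, and combine $|\nabla_p\mathfrak m_0|\le R_*\mathfrak m_0$ with the lower bound of Lemma~\ref{cont:lem:fixed-gap-mass}; both then integrate $\nabla_p\log\mathfrak a$ along the segment $[p,p']$ using the relative bound with the reduced gap $\chi_{\min}/2$.
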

	
	\begin{proof}
		Differentiating the weighted absorption gives
		\[
		\nabla_p\mathfrak n_\sigma(p)
		=e^{-(1+|p|^2)\sigma}
		\int(x-2\sigma p)e^{p\cdot x}u(\sigma,x)^2\,dx.
		\]
		The two \(p\)-derivative contributions combine into the centered
		factor \(x-2\sigma p\), avoiding any separate \(\sigma|p|\) loss.
		By \eqref{cont:eq:fixed-gap-moments},
		\[
		|\nabla_p\mathfrak n_\sigma(p)|
		\le\mathfrak n_{\sigma,1}(p)
		\le C\mathfrak m_0(p)(1+\sqrt\sigma)e^{-c\sigma}.
		\]
		
		To justify differentiation under the time integral, fix
		\(p_0\) with \(|p_0|>1\) and choose a neighborhood \(U\) of \(p_0\)
		such that \(\inf_{q\in U}(|q|-1)>0\). On \(U\),
		\(\mathfrak m_0\) is bounded by compact support, while
		\eqref{cont:eq:fixed-gap-moments} gives an integrable local bound for
		\(\nabla_p\mathfrak n_a\). Hence differentiation under the integral is
		valid, and the same local dominated-convergence argument gives
		\(\mathfrak a\in C^1(\{|p|>1\})\).
		
		For \(|p|\ge1+\chi_{\min}\), we then have
		\[
		|\nabla_p\mathfrak a(p)|
		\le |\nabla_p\mathfrak m_0(p)|
		+\int_0^\infty|\nabla_p\mathfrak n_a(p)|\,da
		\le R_*\mathfrak m_0(p)
		+\int_0^\infty\mathfrak n_{a,1}(p)\,da
		\le C\mathfrak m_0(p).
		\]
		Dividing by \eqref{cont:eq:fixed-gap-positive-mass} proves
		\eqref{cont:eq:relative-amplitude-gradient}.

		Since \(|p'-p|\le\chi_{\min}/2\), every point \(q\) of the segment
		\([p,p']\) satisfies
		\[
		|q|-1\ge\chi_{\min}-|p'-p|\ge\chi_{\min}/2.
		\]
		Applying \eqref{cont:eq:relative-amplitude-gradient} with the smaller
		fixed gap \(\chi_{\min}/2\) along the segment gives
		\[
		|\log\mathfrak a(p')-\log\mathfrak a(p)|
		\le C|p'-p|.
		\]
		Exponentiating and using \(|p'-p|\le\chi_{\min}/2\) yields
		\eqref{cont:eq:neighboring-amplitude}.
	\end{proof}
	
	\subsection{Point-amplitude convergence in the fixed-gap regime}

	In this subsection, we use the point-amplitude Duhamel representation \eqref{cont:eq:point-amplitude-duhamel} and its source term \eqref{cont:eq:point-duhamel-source}. We first establish a bound on the source that remains uniform up to the terminal time.

	\begin{lemma}[Uniform bound for the Duhamel source]
		\label{cont:lem:fixed-gap-source}
		For \(0<a<\sigma\) and \(|p|\ge1+\chi_{\min}\),
		\begin{equation}\label{cont:eq:fixed-gap-source}
			0\le\mathfrak j_\sigma(a,p)
			\le Ce^{-ca}\mathfrak m_0(p).
		\end{equation}
		The bound is uniform as \(a\downarrow0\) and \(a\uparrow\sigma\).
	\end{lemma}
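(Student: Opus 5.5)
The plan is to reduce the Duhamel source to the Gaussian separation estimate of Lemma~\ref{cont:lem:gaussian-separation}, exactly as in the proof of Lemma~\ref{cont:lem:fixed-gap-moments}. The only difference is that the time-$a$ heat kernel there is now replaced by a kernel of variance proportional to $a(\sigma-a)/\sigma$.

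\emph{Step 1: rewrite the source.} Nonnegativity is immediate from \eqref{cont:eq:point-duhamel-source}. Write $u(a,y)^2=u(a,y)\,u(a,y)$ and bound one factor by the heat comparison \eqref{cont:eq:heat-comparison}:
\[
u(a,y)\le e^a\int\mathcal K_a(y-w)u_0(w)\,dw .
\]
Substituting into \eqref{cont:eq:point-duhamel-source} gives an integrand containing
\[
e^{-(1+|p|^2)a}e^{p\cdot y}e^{a}\mathcal K_a(y-w)\Bigl(\tfrac{\sigma}{\sigma-a}\Bigr)^{N/2}e^{-|y-2ap|^2/[4(\sigma-a)]}.
\]
Put $y=2ap+w+z$. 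The identity $e^{-|p|^2a+p\cdot(y-w)}\mathcal K_a(y-w)=\mathcal K_a(z+w-2ap+\dots)$ is used as in Lemma~\ref{cont:lem:fixed-gap-moments}: it turns the first three factors into $e^{p\cdot w}\mathcal K_a(z)$. The last factor is the Gaussian $\exp(-|w+z|^2/[4(\sigma-a)])$. Completing the square in $z$ for
\[
\mathcal K_a(z)\Bigl(\tfrac{\sigma}{\sigma-a}\Bigr)^{N/2}e^{-|w+z|^2/[4(\sigma-a)]}
\]
should produce $e^{-|w|^2/(4\sigma)}\,\mathcal K_{s}(z-z_w)$, where
\[
s=\frac{a(\sigma-a)}{\sigma}\le a,\qquad z_w=-\frac{a}{\sigma}\,w .
\]
The prefactors cancel exactly, since $(4\pi a)^{-N/2}(\sigma/(\sigma-a))^{N/2}=(4\pi s)^{-N/2}$. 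Because $|w|\le R_*$, the shift satisfies $|z_w|\le R_*$. We thus arrive at
\[
\mathfrak j_\sigma(a,p)\le\int e^{p\cdot w}u_0(w)\int\mathcal K_s(z')\,u\bigl(a,2ap+y'+z'\bigr)\,dz'\,dw,
\]
where $y'=w+z_w=(1-a/\sigma)w$ satisfies $|y'|\le R_*$.

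\emph{Step 2: apply Lemma~\ref{cont:lem:gaussian-separation}.} Apply that lemma with time $a$ in place of $\sigma$, shift $y'$, variance $s\in(0,a]$, and $j=0$. It yields $\int\mathcal K_s(z')u(a,2ap+y'+z')\,dz'\le Ce^{-ca}$, uniformly in $s$, including as $s\downarrow0$ (i.e.\ $a\uparrow\sigma$) and as $a\downarrow0$, where the bound is simply $K_0$. Integrating against $e^{p\cdot w}u_0(w)$ gives $Ce^{-ca}\mathfrak m_0(p)$, which is \eqref{cont:eq:fixed-gap-source}.

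\emph{Main obstacle.} The delicate point is the exact Gaussian bookkeeping in Step~1. The exponential weights $e^{p\cdot y}$, $e^{-(1+|p|^2)a}$ and $e^{a}$ must cancel completely, with no residual factor growing in $|p|$. Otherwise the bound could not be uniform for unbounded $|p|$. The residual shift must also stay in $|y'|\le R_*$, so that Lemma~\ref{cont:lem:gaussian-separation} applies without modification. I would verify this by expanding the exponent as a quadratic in $y$ and checking that the linear-in-$p$ terms combine precisely into the centering at $2ap$. This is the same cancellation that gave $\mathcal K_\sigma(x-y-2\sigma p)$ in Lemma~\ref{cont:lem:fixed-gap-moments}. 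The extra factor $e^{-|w|^2/(4\sigma)}\le1$ is then simply discarded.
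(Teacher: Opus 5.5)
Your proposal is correct and is essentially the paper's proof. You bound one factor of $u^2$ by heat comparison, combine the Gaussians into $\mathcal K_s$ with $s=a(\sigma-a)/\sigma$ centered at $2ap+(1-a/\sigma)w$, drop the factor $e^{-|w|^2/(4\sigma)}\le1$, and apply Lemma~\ref{cont:lem:gaussian-separation} at time $a$ with $j=0$; the only difference is cosmetic, since you complete the square directly in the exponent form \eqref{cont:eq:point-duhamel-source}, while the paper passes through the kernel form and the exact two-Gaussian product identity, and your shift $z_w=-(a/\sigma)w$ and prefactor cancellation check out.
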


	\begin{proof}
		The source term \eqref{cont:eq:point-duhamel-source} admits the
		kernel representation
		\begin{equation}\label{cont:eq:source-kernel-form}
			\mathfrak j_\sigma(a,p)
			=\frac{e^{-a}}{\mathcal K_\sigma(2\sigma p)}
			\int\mathcal K_{\sigma-a}(2\sigma p-x)
			u(a,x)^2\,dx .
		\end{equation}
		Applying \eqref{cont:eq:heat-comparison} to one factor of
		\(u(a,x)^2\) in \eqref{cont:eq:source-kernel-form}, we obtain
		\[
		\mathfrak j_\sigma(a,p)
		\le
		\frac{1}{\mathcal K_\sigma(2\sigma p)}
		\iint
		\mathcal K_a(x-y)
		\mathcal K_{\sigma-a}(2\sigma p-x)
		u_0(y)u(a,x)\,dx\,dy.
		\]
		The two heat kernels can be combined by the exact identity
		\begin{equation}\label{cont:eq:two-gaussian-product}
			\frac{\mathcal K_a(x-y)\mathcal K_{\sigma-a}(2\sigma p-x)}
			{\mathcal K_\sigma(2\sigma p-y)}
			=
			\mathcal K_s
			\bigl(x-[2ap+(1-a/\sigma)y]\bigr),
			\qquad
			s:=\frac{a(\sigma-a)}{\sigma}.
		\end{equation}
		Moreover,
		since
		$\mathcal K_\sigma(2\sigma p-y)/
		\mathcal K_\sigma(2\sigma p)
		=e^{p\cdot y-|y|^2/(4\sigma)}$, we obtain
		\[
		\begin{split}
			\mathfrak j_\sigma(a,p)
			&\le
			\int u_0(y)e^{p\cdot y-|y|^2/(4\sigma)}\\
			&\qquad\cdot
			\int\mathcal K_s
			\bigl(x-[2ap+(1-a/\sigma)y]\bigr)
			u(a,x)\,dx\,dy .
		\end{split}
		\]
		Here \(0<s\le a\) and \(|(1-a/\sigma)y|\le R_*\).
		Lemma~\ref{cont:lem:gaussian-separation}, applied at time \(a\),
		therefore yields
		\[
		\int\mathcal K_s
		\bigl(x-[2ap+(1-a/\sigma)y]\bigr)
		u(a,x)\,dx
		\le Ce^{-ca}.
		\]
		Since \(e^{-|y|^2/(4\sigma)}\le1\), integration in \(y\) gives
		\eqref{cont:eq:fixed-gap-source}.
		
		Finally, since \(s\downarrow0\) at both endpoints and
		Lemma~\ref{cont:lem:gaussian-separation} is uniform for
		\(0<s\le a\), the exact Gaussian identity
		\eqref{cont:eq:two-gaussian-product} yields the bound uniformly as
		\(a\downarrow0\) and \(a\uparrow\sigma\).
	\end{proof}

	Unlike the near-critical argument in
	Proposition~\ref{cont:prop:near-finite-amplitude},
	the uniform source estimate in
	Lemma~\ref{cont:lem:fixed-gap-source}
	allows us to split the Duhamel time integral into only
	early and late source times, without a separate
	terminal-time estimate.

	\begin{proposition}[Relative point-amplitude convergence in the fixed-gap regime]
		\label{cont:prop:fixed-gap-amplitude}
		For every fixed \(\chi_{\min}>0\),
		\begin{equation}\label{cont:eq:fixed-gap-amplitude}
			\sup_{|p|\ge1+\chi_{\min}}
			\left|\frac{\mathfrak a_\sigma(p)}{\mathfrak a(p)}-1\right|
			\le\frac{C}{\sigma}\qquad(\sigma\ge1).
		\end{equation}
	\end{proposition}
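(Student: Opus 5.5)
We follow the proof of Proposition~\ref{cont:prop:near-finite-amplitude}, now using the relative fixed-gap estimates of Lemmas~\ref{cont:lem:fixed-gap-moments}--\ref{cont:lem:fixed-gap-source} instead of the near-critical moments. By \eqref{cont:eq:point-amplitude-duhamel} and \eqref{cont:eq:limiting-amplitude},
\[
\mathfrak a_\sigma(p)-\mathfrak a(p)
=\bigl(\mathfrak m_0^{[\sigma]}(p)-\mathfrak m_0(p)\bigr)
-\int_0^{\sigma/2}\bigl(\mathfrak j_\sigma(a,p)-\mathfrak n_a(p)\bigr)da
-\int_{\sigma/2}^{\sigma}\mathfrak j_\sigma(a,p)\,da
+\int_{\sigma/2}^\infty\mathfrak n_a(p)\,da .
\]
The plan is to bound each of the four terms by \(C\mathfrak m_0(p)/\sigma\), uniformly for \(|p|\ge1+\chi_{\min}\) and \(\sigma\ge1\). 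Lemma~\ref{cont:lem:fixed-gap-mass} then gives \(\mathfrak a(p)\ge c_{\rm mass}\mathfrak m_0(p)\), and dividing by \(\mathfrak a(p)\) yields \eqref{cont:eq:fixed-gap-amplitude}.

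\emph{Initial term.} Since \(\supp u_0\subset B_{R_*}\), we have \(0\le 1-e^{-|y|^2/(4\sigma)}\le R_*^2/(4\sigma)\) on the support. Hence
\[
|\mathfrak m_0^{[\sigma]}(p)-\mathfrak m_0(p)|\le \frac{C}{\sigma}\,\mathfrak m_0(p).
\]
This bound is relative to \(\mathfrak m_0(p)\) and does not depend on \(|p|\), even though \(\mathfrak m_0(p)\) itself grows exponentially in \(|p|\).

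\emph{Early source times.} We reuse the two elementary bounds from the near-critical early-time step:
\[
\Bigl(\frac{\sigma}{\sigma-a}\Bigr)^{N/2}-1\le \frac{Ca}{\sigma},
\qquad
1-e^{-|y-2ap|^2/[4(\sigma-a)]}\le \frac{C|y-2ap|^2}{\sigma},
\qquad 0\le a\le \sigma/2 .
\]
They give
\[
|\mathfrak j_\sigma(a,p)-\mathfrak n_a(p)|\le \frac{C}{\sigma}\bigl[a\,\mathfrak n_a(p)+\mathfrak n_{a,2}(p)\bigr].
\]
Both terms involve only centered moments about \(2ap\), with no stray \(|p|\) factor. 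By \eqref{cont:eq:fixed-gap-moments}, the bracket is at most \(C\mathfrak m_0(p)(1+a)e^{-ca}\), and integrating over \(a\in[0,\infty)\) gives \(C\mathfrak m_0(p)/\sigma\).

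\emph{Late source times and tail.} Lemma~\ref{cont:lem:fixed-gap-source} holds uniformly up to \(a\uparrow\sigma\), so
\[
\int_{\sigma/2}^\sigma \mathfrak j_\sigma(a,p)\,da\le C\sigma e^{-c\sigma/2}\,\mathfrak m_0(p).
\]
Likewise, \eqref{cont:eq:fixed-gap-moments} with \(j=0\) gives
\[
\int_{\sigma/2}^\infty \mathfrak n_a(p)\,da\le Ce^{-c\sigma/2}\,\mathfrak m_0(p).
\]
Both are exponentially small, hence at most \(C\mathfrak m_0(p)/\sigma\) for \(\sigma\ge1\). This is why no separate terminal-time estimate is needed here, unlike in the near-critical case.

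\emph{Main obstacle.} The delicate point is the early-time second factor, and I would check it first. The required inequality \(1-e^{-X}\le X\) holds trivially. The real issue is that \(|y-2ap|^2\) in \(\mathfrak j_\sigma\) must be the same centered quantity as in \(\mathfrak n_{a,2}\); this must hold exactly, since \(|p|\) is unbounded and any mismatch would introduce an uncontrolled \(|p|\) factor. It holds because \eqref{cont:eq:point-duhamel-source} came from completing the square around \(2ap\). With that confirmed, the uniformity in \(|p|\) comes entirely from Lemma~\ref{cont:lem:gaussian-separation}, through the moment and source bounds.
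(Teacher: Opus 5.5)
Your proposal is correct and matches the paper's proof essentially step for step. It uses the same four-term decomposition of \(\mathfrak a_\sigma(p)-\mathfrak a(p)\), the same early-time centered-moment bound via Lemma~\ref{cont:lem:fixed-gap-moments}, exponentially small late-time and tail contributions from Lemma~\ref{cont:lem:fixed-gap-source} and \eqref{cont:eq:fixed-gap-moments}, the \(R_*^2/(4\sigma)\) initial-mass error, and finally division by \(\mathfrak a(p)\ge c_{\rm mass}\mathfrak m_0(p)\). The only cosmetic difference is that you bound \(\int_{\sigma/2}^{\sigma}\mathfrak j_\sigma\,da\) by \(C\sigma e^{-c\sigma/2}\mathfrak m_0(p)\) instead of integrating \(e^{-ca}\), which is equally harmless for \(\sigma\ge1\).
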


	\begin{proof}
		\emph{Early source times.}
		For \(0\le a\le\sigma/2\), the exact formula
		\eqref{cont:eq:point-duhamel-source} gives
		\[
		|\mathfrak j_\sigma(a,p)-\mathfrak n_a(p)|
		\le\frac{C_N}{\sigma}
		\bigl[a\mathfrak n_a(p)+\mathfrak n_{a,2}(p)\bigr],
		\]
		by using 
		$(\sigma/(\sigma-a))^{N/2}-1\le C_Na/\sigma$
		and
		$1-e^{-|x-2ap|^2/[4(\sigma-a)]}
		\le C|x-2ap|^2/\sigma$.
		Lemma~\ref{cont:lem:fixed-gap-moments} and the integrability of
		\((1+a)e^{-ca}\) therefore give
		\[
		\int_0^{\sigma/2}
		|\mathfrak j_\sigma(a,p)-\mathfrak n_a(p)|\,da
		\le\frac{C}{\sigma}\mathfrak m_0(p).
		\]

		\emph{Late source times.}
		On \(\sigma/2\le a<\sigma\), 
		we obtain \(\mathfrak j_\sigma(a,p)+\mathfrak n_a(p)
		\le Ce^{-ca}\mathfrak m_0(p)\) by
		using
		\eqref{cont:eq:fixed-gap-source} and
		\eqref{cont:eq:fixed-gap-moments} directly.
		Thus
		\[
		\begin{split}
			\int_{\sigma/2}^\sigma
			(\mathfrak j_\sigma(a,p)+\mathfrak n_a(p))\,da
			+\int_\sigma^\infty\mathfrak n_a(p)\,da
			\le
			Ce^{-c\sigma/2}\mathfrak m_0(p).
		\end{split}
		\]
		Compact support gives
		\(0\le\mathfrak m_0(p)-\mathfrak m_0^{[\sigma]}(p)
		\le\frac{R_*^2}{4\sigma}\mathfrak m_0(p)\).
		Combining with \eqref{cont:eq:point-amplitude-duhamel} and
		\eqref{cont:eq:limiting-amplitude},
		\[
		|\mathfrak a_\sigma(p)-\mathfrak a(p)|
		\le C\mathfrak m_0(p)
		\left(\sigma^{-1}+e^{-c\sigma/2}\right).
		\]
		For $\sigma\ge1$, absorb the exponential into
		$C/\sigma$ and use
		\eqref{cont:eq:fixed-gap-positive-mass}
		to obtain the relative estimate.
	\end{proof}
	
	\section{Gaussian exterior and whole-space log-concavity}
	\label{cont:sec:exterior}
	
	In this section, we establish the logarithmic-Hessian bound in the Gaussian exterior and combine it with the preceding estimates to complete the proof of whole-space eventual log-concavity.

	\begin{proposition}[Uniform exterior logarithmic Hessian bound]
		\label{cont:prop:exterior}
		There exist \(K_{\rm ext},\Sigma_{\rm ext}>0\) such that
		\begin{equation}\label{cont:eq:exterior-hessian}
			D_x^2\log u(\sigma,2\sigma p)
			\le -\frac{1}{4\sigma}\id
		\end{equation}
		for every \(\sigma\ge\Sigma_{\rm ext}\) and every
		\(p\in\mathbb R^N\) satisfying
		\(\chi(p)\sqrt{\sigma}\ge K_{\rm ext}.\)
		Equivalently, writing \(x=2\sigma p\), the estimate holds throughout
		\(|x|\ge 2\sigma+2K_{\rm ext}\sqrt{\sigma}\).
	\end{proposition}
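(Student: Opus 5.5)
We compare \(u\) with the Gaussian \(e^\sigma\mathcal K_\sigma\,\mathfrak a\) on a parabolic neighbourhood of each exterior point, working in the moving frame. Fix \(\sigma_0\ge\Sigma_{\rm ext}\) and \(p_0\) with \(\chi(p_0)\sqrt{\sigma_0}\ge K_{\rm ext}\). Consider the rescaled function
\[
\mathcal W(\theta,y)=\frac{u(\sigma,x)}{e^{\sigma}\mathcal K_\sigma(x)\,\mathfrak a(p_0)},\qquad x=2\sigma_0p_0+\sqrt{\sigma_0}\,y,\quad \sigma=\sigma_0(1+\theta/\sigma_0)\ \ \text{(unit time scale)},
\]
on a fixed cylinder \(|y|\le1\), \(-1\le\theta\le0\). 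By definition, \(\mathcal W(\theta,y)=\mathfrak a_\sigma(p)/\mathfrak a(p_0)\) with \(p=x/(2\sigma)\). For points in the cylinder, \(|p-p_0|=O(\sigma_0^{-1/2})\), and \(\chi(p)\ge\chi(p_0)/2\) once \(K_{\rm ext}\) is large.

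\textbf{Step 1: uniform \(C^0\) closeness.} We show \(|\mathcal W-1|\le\varepsilon(K_{\rm ext},\Sigma_{\rm ext})\) on the cylinder, with \(\varepsilon\) small. We split into two regimes.

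In the near-critical regime \(\chi(p_0)\le\chi_0\), Proposition~\ref{cont:prop:near-finite-amplitude} gives a relative error \(C/(\sigma\chi^2)\le 4C/K_{\rm ext}^2\). The neighbouring-velocity error is controlled by Lemma~\ref{cont:lem:near-amplitude}:
\[
|\mathfrak a(p)-\mathfrak a(p_0)|/\mathfrak a(p_0)\le C\sigma_0^{-1/2}/(c\chi)\le C/K_{\rm ext}.
\]
In the fixed-gap regime \(\chi(p_0)\ge\chi_0\), Proposition~\ref{cont:prop:fixed-gap-amplitude} and Lemma~\ref{cont:lem:neighboring-velocities} give a relative error \(O(\sigma_0^{-1/2})\), uniformly in \(|p|\) unbounded. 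Taking \(\chi_{\min}=\chi_0/2\) covers the overlap between the two regimes.

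\textbf{Step 2: upgrade to \(C^2\).} We write the equation for \(\mathcal W\). Since \(e^\sigma\mathcal K_\sigma\) solves \(\partial_\sigma-\Delta-1=0\), the function \(w=u/(e^\sigma\mathcal K_\sigma)\) satisfies
\[
w_\sigma=\Delta w+2\nabla\log\mathcal K_\sigma\cdot\nabla w-uw,\qquad \nabla\log\mathcal K_\sigma=-x/(2\sigma)=-p.
\]
The drift \(-2p\) is large. We remove it by recentring along the characteristic, i.e.\ we pass to \(y\) moving with velocity \(2p_0\), which leaves a drift \(O(|p-p_0|)\); after the \(\sqrt{\sigma_0}\) rescaling this drift is \(O(1)\). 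The absorption term becomes \(\sigma_0u\le\sigma_0 Ce^{-c\,\cdot}\), which is small. This smallness follows from \eqref{cont:eq:upper-solution} when \(r\gtrsim K_{\rm ext}\sqrt\sigma\), and from Gaussian separation in the fixed-gap case. Interior Schauder estimates then bound \(\mathcal W\) in \(C^{2+\alpha}\) on a smaller cylinder. Interpolating against Step 1 gives \(|D_y\mathcal W|+|D_y^2\mathcal W|\le\varepsilon'\) at the centre.

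\textbf{Step 3: conclude.} At \(x_0=2\sigma_0p_0\) we have
\[
\log u=\sigma+\log\mathcal K_\sigma+\log\mathfrak a(p_0)+\log\mathcal W .
\]
The Gaussian term contributes \(D_x^2\log\mathcal K_\sigma=-\id/(2\sigma)\). The last term contributes
\[
D_x^2\log\mathcal W=\sigma_0^{-1}\bigl(D_y^2\mathcal W/\mathcal W-D_y\mathcal W\otimes D_y\mathcal W/\mathcal W^2\bigr),
\]
whose norm is at most \(2\varepsilon'/\sigma_0\le1/(4\sigma_0)\) for \(K_{\rm ext},\Sigma_{\rm ext}\) large. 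This yields \(-\id/(4\sigma)\).

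\textbf{Main obstacle.} The hardest part is Step 2 in the near-critical regime. There we need the smallness of \(\sigma_0 u\) on the rescaled cylinder, and we need the drift, after recentring, to be genuinely \(O(1)\) uniformly down to \(\chi\sqrt\sigma\approx K_{\rm ext}\). We would try to verify both via \eqref{cont:eq:upper-solution}, since points with \(|x|\ge 2\sigma+K_{\rm ext}\sqrt\sigma\) have \(r\ge K_{\rm ext}\sqrt\sigma+k\log\sigma\), making \(\sigma u\) exponentially small.
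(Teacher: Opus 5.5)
The overall route is the paper's: the factorization \(u=e^\sigma\mathcal K_\sigma\,\mathfrak a_\sigma(x/(2\sigma))\), \(C^0\) closeness from the near-critical and fixed-gap amplitude estimates, smallness of the rescaled absorption from \eqref{cont:eq:upper-solution}, and a Schauder upgrade. However, the parabolic cylinder you set up is not the one Step~2 needs, and as written the argument fails there.

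\emph{The cylinder has the wrong time scale.} You take \(\sigma=\sigma_0+\theta\), \(\theta\in[-1,0]\), with fixed centre \(2\sigma_0p_0\) and spatial scale \(\sqrt{\sigma_0}\). On a unit time interval the \(y\)-rescaled operator is \(\partial_\theta-\sigma_0^{-1}\Delta_y+\cdots\), which degenerates, so Schauder gives no uniform \(C^{2+\alpha}_y\) bound. Equivalently, in physical units, interior estimates on unit-time cylinders give at best \(|D_x^2w|\lesssim\varepsilon\,\mathfrak a(p_0)\), while Step~3 needs \(|D_x^2w|\le\varepsilon\,\mathfrak a(p_0)/\sigma_0\). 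The factor \(1/\sigma_0\) is available only from regularity at spatial scale \(\sqrt{\sigma_0}\), hence over a time span comparable to \(\sigma_0\). That diffusive scaling is also the only one in which your claims in Step~2 hold, namely that the residual drift is \(O(1)\) and the absorption becomes \(\sigma_0u\).

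\emph{The fixed centre breaks Step~1.} With a fixed centre, \(p-p_0=(\sqrt{\sigma_0}\,y-2\theta p_0)/(2\sigma)\). This is not \(O(\sigma_0^{-1/2})\) once \(|p_0|\gg\sqrt{\sigma_0}\), so Step~1 is not uniform in unbounded \(|p_0|\).

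\emph{The fix.} Work from the outset on a co-moving diffusive cylinder: \(x=2\sigma p_0+\sqrt{\sigma_0}\,y\), \(\sigma=\sigma_0\theta\), with \(\theta\) in a fixed interval bounded away from \(0\), e.g. \([1/2,1]\). (Your \(\theta\in[-1,0]\), read diffusively, would reach \(\sigma=0\).) Then \(p=p_0+y/(2\theta\sqrt{\sigma_0})\), and Step~1 must be proved at all times \(\sigma\in[\sigma_0/2,\sigma_0]\), using \(\sigma\ge\sigma_0/2\ge\Sigma_{\rm near}\). This is exactly Lemma~\ref{cont:lem:uniform-gaussian-approximation} on \(\mathcal Q^+\).

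Two smaller points need attention:
\begin{enumerate}
\item Schauder needs a H\"older bound for \(\sigma_0u\) in the rescaled variables. In the moving frame its \(\theta\)-derivative contains \(2\sigma_0^2\,p_0\cdot\nabla_xu\) with \(|p_0|\) unbounded. The exponential factor in \eqref{cont:eq:upper-solution} absorbs this (Lemma~\ref{cont:lem:rescaled-potential-smallness}); Gaussian separation is not needed for the smallness.
\item The near-critical case should be cut at \(\chi(p_0)\le\chi_0/2\), so that the segment \([p_0,p]\) stays in the annulus of Lemma~\ref{cont:lem:near-amplitude}.
\end{enumerate}

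With these corrections your proof coincides with the paper's up to one cosmetic choice. You divide out the Gaussian, so \(\mathcal W\approx1\), the negativity \(-\id/(2\sigma)\) is exact, and the equation carries the Ornstein--Uhlenbeck-type drift \(-(y/\theta)\cdot\nabla_y\). The paper keeps \(\mathcal K_\theta\) in the profile, so \(\mathcal W\) solves the pure heat equation with potential \(\mathcal V\). Schauder applied to \(\mathcal W-\mathcal K_\theta\) (Lemma~\ref{cont:lem:local-c2-gaussian-recovery}) then gives linear rather than interpolated \(C^2\) control.
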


	We use a single normalized profile throughout the Gaussian exterior.
	Its rescaled equation and exact Gaussian--amplitude identity reduce
	the proof of Proposition \ref{cont:prop:exterior} to three analytical ingredients: a uniform \(C^0\) Gaussian
	approximation, uniform smallness of the rescaled absorption coefficient,
	and local \(C^2\) recovery.  
	Unless stated otherwise, the constants in the following estimates may change from line to line but are independent of \(\sigma\), \(p\), and \(K_{\rm ext}\).
	
	\subsection{A normalized exterior profile}
	
	Recall
	\[
	\mathcal U_p(\sigma,z)
	=e^{(|p|^2-1)\sigma+p\cdot z}
	u(\sigma,2p\sigma+z)
	\]
	from \eqref{cont:eq:unnormalized-galilean}. If \(u\) satisfies the linearized Fisher--KPP equation \(u_\sigma=\Delta_xu+u\), then \(\mathcal U_p\) satisfies the ordinary heat equation and its total mass \(\mathfrak m_\sigma(p)=\int_{\mathbb R^N}\mathcal U_p(\sigma,z)\,dz=\mathfrak m_0(p)\) is conserved. Moreover, by the heat-kernel representation and compact support of \(u_0\), for each fixed \(p\) and \(\theta>0\),
	\[
	\sigma^{N/2}\mathcal U_p(\sigma\theta,\sqrt{\sigma}z)
	\longrightarrow \mathfrak m_0(p)\mathcal K_\theta(z)
	\qquad (\sigma\to\infty),
	\]
	locally uniformly in \(z\).
	For the nonlinear equation \(u_\sigma=\Delta_xu+u-u^2\), the absorption in \eqref{cont:eq:unnormalized-pde} instead decreases the weighted mass \(\mathfrak m_\sigma(p)\), and Sections~\ref{cont:sec:near-critical} and~\ref{cont:sec:fixed-gap} establish its positive limit \(\mathfrak a(p)\). This motivates the normalization introduced below.

	For a fixed terminal pair \((\sigma,p)\), with \(\sigma>0\) and
	\(|p|>1\), define the normalized profile
	\begin{equation}\label{cont:eq:galilean-profile}
		\begin{split}
			\mathcal W(\theta,z)
			&=
			\frac{\sigma^{N/2}}{\mathfrak a(p)}
			\mathcal U_p(\sigma\theta,\sqrt{\sigma}\,z).
		\end{split}
	\end{equation}
	We suppress the terminal pair \((\sigma,p)\) in the notation for
	\(\mathcal W\) and \(\mathcal V\) below. The factor
	\(\sigma^{N/2}\) removes the diffusive height decay, while division by
	\(\mathfrak a(p)\) normalizes the amplitude. We therefore aim to show that \(\mathcal W\) is close to the unit-scale heat kernel \(\mathcal K_\theta(z)\) on fixed cylinders in \((\theta,z)\). The preceding positivity lemmas ensure
	that this normalization is well defined for every \(|p|>1\).

	We work on the fixed nested cylinders
	\[
	\mathcal Q^+=(1/3,5/3)\times B_3,\qquad
	\mathcal Q^0=(1/2,3/2)\times B_2,\qquad
	\mathcal Q^-=(3/4,5/4)\times B_1.
	\]
	Set
	\begin{equation}\label{cont:eq:rescaled-potential}
		\mathcal V(\theta,z)
		=
		\sigma u(\sigma\theta,2p\sigma\theta+\sqrt{\sigma}\,z)
		=
		\sigma\mathcal V_p(\sigma\theta,\sqrt{\sigma}\,z).
	\end{equation}
	Applying \eqref{cont:eq:unnormalized-pde} at elapsed time
	\(\sigma\theta\) and spatial point \(\sqrt{\sigma}\,z\) gives the
	exact equation
	\begin{equation}\label{cont:eq:galilean-pde}
		(\partial_\theta-\Delta_z)\mathcal W
		=
		-\mathcal V\mathcal W.
	\end{equation}
	
	We next use the exact relation between \(\mathcal U_p\) and the point amplitude.
	Indeed, since
	\(2sp+y=2s\bigl(p+y/(2s)\bigr)\), the definition
	\eqref{cont:eq:point-amplitude} gives
	\begin{equation}\label{cont:eq:galilean-point-amplitude}
		\mathcal U_p(s,y)
		=
		\mathcal K_s(y)\,
		\mathfrak a_s\!\left(p+\frac{y}{2s}\right),
		\qquad s>0.
	\end{equation}
	Thus diffusive-scale displacements correspond to small
	changes in \(p\).
	For \((\theta,z)\in\mathcal Q^+\), introduce the neighboring parameter
	\begin{equation}\label{cont:eq:neighboring-velocity}
		p'
		=
		p+\frac{z}{2\theta\sqrt{\sigma}},
		\qquad
		2\sigma\theta p'
		=
		2p\sigma\theta+\sqrt{\sigma}\,z.
	\end{equation}
	Applying \eqref{cont:eq:galilean-point-amplitude} with
	\(s=\sigma\theta\) and \(y=\sqrt{\sigma}\,z\), and using the heat-kernel
	scaling identity
	\(\sigma^{N/2}\mathcal K_{\sigma\theta}(\sqrt{\sigma}\,z)
	=\mathcal K_\theta(z)\), yields the exact factorization
	\begin{equation}\label{cont:eq:profile-amplitude-identity}
		\mathcal W(\theta,z)
		=
		\mathcal K_\theta(z)
		\frac{\mathfrak a_{\sigma\theta}(p')}{\mathfrak a(p)}.
	\end{equation}

	\subsection{Uniform Gaussian approximation and Hessian recovery}
	
	The profile-amplitude identity \eqref{cont:eq:profile-amplitude-identity} reduces the Gaussian approximation to
	relative amplitude control. We use the near-critical estimates of
	Section~\ref{cont:sec:near-critical} and the fixed-gap estimates of
	Section~\ref{cont:sec:fixed-gap} in their respective regimes.
	
	\begin{lemma}[Uniform \(C^0\) Gaussian approximation]
		\label{cont:lem:uniform-gaussian-approximation}
		There exist fixed constants \(C>0\) and \(\Sigma_0\ge1\) such that,
		for every \(K_{\rm ext}\ge9\), every \(\sigma\ge\Sigma_0\), and
		every \(p\in\mathbb R^N\) satisfying
		\(\chi(p)\sqrt{\sigma}\ge K_{\rm ext}\),
		\begin{equation}\label{cont:eq:uniform-gaussian-approximation}
			\|\mathcal W-\mathcal K_\theta\|_
			{C^0(\overline{\mathcal Q^+})}
			\le
			C\left(
			K_{\rm ext}^{-1}+K_{\rm ext}^{-2}+\sigma^{-1/2}
			\right).
		\end{equation}
		The constants \(C,\Sigma_0\) are independent of \(K_{\rm ext}\)
		and of the magnitude and direction of \(p\).
	\end{lemma}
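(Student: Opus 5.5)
By the factorization \eqref{cont:eq:profile-amplitude-identity}, for \((\theta,z)\in\overline{\mathcal Q^+}\) we have
\[
\mathcal W-\mathcal K_\theta=\mathcal K_\theta(z)\Bigl(\frac{\mathfrak a_{\sigma\theta}(p')}{\mathfrak a(p)}-1\Bigr),
\]
and \(\mathcal K_\theta\) is uniformly bounded on \(\overline{\mathcal Q^+}\) because \(\theta\ge1/3\). It therefore suffices to bound the relative amplitude error uniformly. We split it as
\[
\frac{\mathfrak a_{\sigma\theta}(p')}{\mathfrak a(p)}-1
=\frac{\mathfrak a(p')}{\mathfrak a(p)}\Bigl(\frac{\mathfrak a_{\sigma\theta}(p')}{\mathfrak a(p')}-1\Bigr)+\Bigl(\frac{\mathfrak a(p')}{\mathfrak a(p)}-1\Bigr),
\]
and control the finite-time error at \(p'\) and the stability of \(\mathfrak a\) separately.

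The first step is to locate \(p'\). Since \(|p'-p|\le|z|/(2\theta\sqrt\sigma)\le 9/(2\sqrt\sigma)\), we get \(\chi(p')\ge\chi(p)-9/(2\sqrt\sigma)\). Together with \(\chi(p)\sqrt\sigma\ge K_{\rm ext}\ge9\), this gives \(\chi(p')\ge\chi(p)/2\), so \(\chi(p')\sqrt\sigma\ge K_{\rm ext}/2\). The elapsed time \(\sigma\theta\) satisfies \(\sigma\theta\ge\sigma/3\). We then distinguish two regimes, with the threshold taken at \(\chi(p)\le 2\chi_0\) versus \(\chi(p)>2\chi_0\) (adjusted so that \(\chi(p')\) falls in the correct range for each).

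\emph{Near-critical regime.} When \(\chi(p),\chi(p')\le\chi_0\) (up to a harmless overlap), Proposition~\ref{cont:prop:near-finite-amplitude} gives the relative error
\[
\Bigl|\frac{\mathfrak a_{\sigma\theta}(p')}{\mathfrak a(p')}-1\Bigr|\le\frac{C}{\sigma\theta\,\chi(p')^2}\le\frac{C}{K_{\rm ext}^2}.
\]
This requires \(\sigma\theta\ge\Sigma_{\rm near}\), which is ensured by taking \(\Sigma_0\) large. For the stability term, Lemma~\ref{cont:lem:near-amplitude} yields
\[
|\mathfrak a(p')-\mathfrak a(p)|\le C|p'-p|\le C\sigma^{-1/2},\qquad \mathfrak a(p)\ge c\chi(p),
\]
so that \(|\mathfrak a(p')/\mathfrak a(p)-1|\le C/(\chi(p)\sqrt\sigma)\le C/K_{\rm ext}\). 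This also bounds the prefactor \(\mathfrak a(p')/\mathfrak a(p)\) by \(2\). The one point to check is that the segment \([p,p']\) stays in \(\{\chi>0\}\), so that the gradient bound applies; this holds since \(\chi\ge\chi(p)/2\) along the segment. If the segment exits the annulus \(\chi\le\chi_0\), we handle it by the fixed-gap estimate below.

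\emph{Fixed-gap regime.} When \(\chi(p)\ge 2\chi_0\), we have \(\chi(p')\ge\chi_0\) for large \(\sigma\). We apply Proposition~\ref{cont:prop:fixed-gap-amplitude} and Lemma~\ref{cont:lem:neighboring-velocities} with \(\chi_{\min}=\chi_0\). These give a relative error at most \(C/(\sigma\theta)\le C/\sigma\), and a stability error at most \(C|p'-p|\le C\sigma^{-1/2}\). Both bounds are uniform in \(|p|\) and in direction.

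The main obstacle is the transition between the two regimes. A point \(p\) with \(\chi(p)\) near \(\chi_0\) has \(p'\) possibly on the other side of the threshold, and the near-critical bounds hold only for \(\chi\le\chi_0\). To handle this, I would use overlapping regimes: the near-critical case for \(\chi(p)\le\chi_0/2\) (so \(\chi(p')\le\chi_0\) once \(\sigma\ge\Sigma_0\)), and the fixed-gap case with \(\chi_{\min}=\chi_0/4\) for \(\chi(p)>\chi_0/2\). Since the constants in Lemma~\ref{cont:lem:neighboring-velocities} and Proposition~\ref{cont:prop:fixed-gap-amplitude} depend only on the fixed \(\chi_{\min}\), they stay independent of \(K_{\rm ext}\). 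Combining the bounds in the two cases yields \eqref{cont:eq:uniform-gaussian-approximation}.
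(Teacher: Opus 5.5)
Your proposal is correct and follows essentially the paper's proof: the factorization \eqref{cont:eq:profile-amplitude-identity}, the displacement bound $|p'-p|\le 9/(2\sqrt\sigma)$, and your final overlapping split are what the paper does. That split takes the near-critical case for $\chi(p)\le\chi_0/2$ and the fixed-gap case with $\chi_{\min}=\chi_0/4$ otherwise. The paper divides both error terms directly by $\mathfrak a(p)$ instead of factoring through $\mathfrak a(p')$, which is equivalent. One harmless slip: for all $K_{\rm ext}\ge9$ the prefactor $\mathfrak a(p')/\mathfrak a(p)$ is only bounded by $1+C/9$, not by $2$.
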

	
	\begin{proof}
		Choose \(\Sigma_0\ge3\Sigma_{\rm near}\) sufficiently large that
		\(9/(2\sqrt{\Sigma_0})\le\chi_0/8\).
		
		\emph{Case 1: near-critical regime.}
		Write \(\chi=\chi(p)=|p|-1\), and suppose
		\(K_{\rm ext}/\sqrt{\sigma}\le\chi\le\chi_0/2\).
		For \((\theta,z)\in\overline{\mathcal Q^+}\),
		\eqref{cont:eq:neighboring-velocity} gives
		\(|p'-p|\le9/(2\sqrt{\sigma})\).
		Since \(K_{\rm ext}\ge9\) and \(\sigma\ge\Sigma_0\),
		we have \(|p'-p|\le\min\{\chi/2,\chi_0/8\}\).
		Consequently, every \(q\in[p,p']\) satisfies
		\(\chi/2\le\chi(q)\le\chi_0\).
		Hence the whole segment remains in the annulus of
		Lemma~\ref{cont:lem:near-amplitude}, which gives
		\(\mathfrak a(p)\ge c\chi\) and a uniform gradient
		bound \(|\nabla\mathfrak a|\le C\) along the segment.
		Since \(\sigma\theta\ge\sigma/3\ge\Sigma_{\rm near}\),
		Proposition~\ref{cont:prop:near-finite-amplitude} gives
		\[
		\begin{aligned}
			\left|
			\frac{\mathfrak a_{\sigma\theta}(p')}{\mathfrak a(p)}-1
			\right|
			&\le
			\frac{
				|\mathfrak a_{\sigma\theta}(p')-\mathfrak a(p')|
			}{\mathfrak a(p)}
			+
			\frac{
				|\mathfrak a(p')-\mathfrak a(p)|
			}{\mathfrak a(p)}
			\\
			&\le
			\frac{C}{\sigma\chi^2}
			+
			\frac{C}{\chi\sqrt{\sigma}}.
		\end{aligned}
		\]
		Since \(\chi\sqrt{\sigma}\ge K_{\rm ext}\),
		\begin{equation}\label{cont:eq:near-cylinder-amplitude}
			\sup_{\overline{\mathcal Q^+}}
			\left|
			\frac{\mathfrak a_{\sigma\theta}(p')}{\mathfrak a(p)}-1
			\right|
			\le C\left(K_{\rm ext}^{-2}+K_{\rm ext}^{-1}\right).
		\end{equation}

		\emph{Case 2: fixed-gap regime.}
		Suppose \(\chi(p)\ge\chi_0/2\).
		Since \(|p'-p|\le\chi_0/8\),
		every \(q\in[p,p']\) satisfies
		\(\chi(q)\ge\chi_0/4\).
		Thus the whole segment remains in the fixed-gap regime,
		and Lemma~\ref{cont:lem:neighboring-velocities}
		applies uniformly along it.
		Proposition~\ref{cont:prop:fixed-gap-amplitude} and
		Lemma~\ref{cont:lem:neighboring-velocities} give,
		uniformly on \(\overline{\mathcal Q^+}\),
		\[
		\frac{\mathfrak a_{\sigma\theta}(p')}{\mathfrak a(p')}
		=1+O(\sigma^{-1}),\qquad
		\frac{\mathfrak a(p')}{\mathfrak a(p)}
		=1+O(\sigma^{-1/2}).
		\]
		Multiplying these two ratios yields
		\begin{equation}\label{cont:eq:fixed-gap-cylinder-amplitude}
			\sup_{\substack{\chi(p)\ge\chi_0/2\\
					(\theta,z)\in\overline{\mathcal Q^+}}}
			\left|
			\frac{\mathfrak a_{\sigma\theta}(p')}{\mathfrak a(p)}-1
			\right|
			\le C\left(\sigma^{-1}+\sigma^{-1/2}\right).
		\end{equation}
		The constants are independent of the magnitude
		and direction of \(p\).
		
		The two cases cover all \(p\) satisfying
		\(\chi(p)\sqrt{\sigma}\ge K_{\rm ext}\)
		and meet at \(\chi(p)=\chi_0/2\).
		Combining \eqref{cont:eq:near-cylinder-amplitude}
		and \eqref{cont:eq:fixed-gap-cylinder-amplitude}
		with \eqref{cont:eq:profile-amplitude-identity}
		yields \eqref{cont:eq:uniform-gaussian-approximation}.
	\end{proof}

	We next estimate the rescaled absorption coefficient \(\mathcal V\).
	Fix \(0<\alpha<1\) for the following two lemmas.
	Their constants may depend on \(\alpha\).

	\begin{lemma}[Uniform smallness of the rescaled absorption coefficient]
		\label{cont:lem:rescaled-potential-smallness}
		There exist fixed constants \(C>0\) and
		\(\Sigma_1\ge1\) such that, for every \(K_{\rm ext}\ge30\),
		every \(\sigma\ge\Sigma_1\), and every \(p\in\mathbb R^N\)
		satisfying \(\chi(p)\sqrt{\sigma}\ge K_{\rm ext}\),
		\begin{equation}\label{cont:eq:potential-holder}
			\|\mathcal V\|_
			{C^{\alpha/2,\alpha}(\overline{\mathcal Q^+})}
			\le
			C\sigma^2e^{-K_{\rm ext}\sqrt{\sigma}/6}.
		\end{equation}
		The constants and the threshold are independent of
		\(K_{\rm ext}\) and of the magnitude and direction of \(p\).
	\end{lemma}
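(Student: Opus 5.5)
The plan is to bound \(\mathcal V=\sigma u(\sigma\theta,\cdot)\) pointwise on \(\overline{\mathcal Q^+}\) together with its first derivatives, and then read off the Hölder seminorms. For \((\theta,z)\in\overline{\mathcal Q^+}\) the physical point is
\[
x=2\sigma\theta p'=2p\sigma\theta+\sqrt\sigma z
\]
at elapsed time \(s=\sigma\theta\in[\sigma/3,5\sigma/3]\). Its radial excess over the critical sphere satisfies
\[
|x|-2s\ge 2s\chi(p)-3\sqrt\sigma\ge \tfrac23\sigma\chi(p)-3\sqrt\sigma\ge \tfrac23K_{\rm ext}\sqrt\sigma-3\sqrt\sigma ,
\]
using \(\chi(p)\sqrt\sigma\ge K_{\rm ext}\). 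Since \(K_{\rm ext}\ge30\), the term \(3\sqrt\sigma\) is at most \(K_{\rm ext}\sqrt\sigma/10\), so \(|x|-2s\ge \tfrac{17}{30}K_{\rm ext}\sqrt\sigma\). In the front coordinate at original time \(1+s\) this means \(r=|x|-R_0(1+s)\ge |x|-2s-2\ge K_{\rm ext}\sqrt\sigma/2\) for \(\sigma\ge\Sigma_1\).

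The pointwise bound comes from Lemma~\ref{cont:lem:upper-leading-edge} at time \(1+s\ge T_{\rm le}\). It gives \(u\le C(1+r)e^{-r}\), which is at most \(Ce^{-r/2}\cdot\sup_r(1+r)e^{-r/2}\le Ce^{-K_{\rm ext}\sqrt\sigma/4}\). This bound is uniform in the direction and magnitude of \(p\), because that lemma holds for all \(r\ge0\) and all \(\omega\). When \(|p|\) is huge it is also consistent with the heat comparison \eqref{cont:eq:heat-comparison}, which could be used instead, but the leading-edge bound suffices. Hence \(\|\mathcal V\|_{C^0}\le C\sigma e^{-K_{\rm ext}\sqrt\sigma/4}\).

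For the Hölder part I will use the time derivative and the Galilean drift. First,
\[
\partial_\theta\mathcal V=\sigma^2\bigl(u_\sigma+2p\cdot\nabla_xu\bigr)\qquad\text{and}\qquad \nabla_z\mathcal V=\sigma^{3/2}\nabla_xu .
\]
The factor \(|p|\) is the real obstacle, since \(p\) is unbounded. I will handle it by a relative derivative bound: interior parabolic estimates applied to \(u\) on unit cylinders around the point, whose coefficient \(1-u\) is bounded, give
\[
|\nabla_xu|+|u_\sigma|\le C\sup_{\text{unit cylinder}}u .
\]
The same leading-edge bound applies on that cylinder, with \(r\) shifted by at most \(O(1)\). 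To absorb \(|p|\) when \(|p|\) is large, I refine the decay using the exterior heat bound \eqref{cont:eq:heat-comparison}. For \(|x|\ge 2s|p|-3\sqrt\sigma\) it gives \(u\le C\exp\bigl(s-(|x|-R_*)^2/(4s)\bigr)\le C\exp(-c|p|^2\sigma)\) when \(|p|\ge2\). This makes \(|p|u\) bounded by \(Ce^{-K_{\rm ext}\sqrt\sigma/6}\) as well. The step from \(1/4\) to \(1/6\) in the exponent leaves room for the polynomial losses and the \(O(1)\) shifts.

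Combining these gives \(|\partial_\theta\mathcal V|+|\nabla_z\mathcal V|\le C\sigma^2e^{-K_{\rm ext}\sqrt\sigma/6}\) on \(\overline{\mathcal Q^+}\). The \(C^{\alpha/2,\alpha}\) norm on the bounded cylinder is dominated by the Lipschitz norm, which proves \eqref{cont:eq:potential-holder}. The main difficulty is the uniformity in unbounded \(|p|\) of the \(\theta\)-derivative, and the case split at \(|p|=2\) handles it.
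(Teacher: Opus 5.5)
Your proposal is correct, but it handles the unbounded factor \(|p|\) in \(\partial_\theta\mathcal V=\sigma^2(u_\sigma+2p\cdot\nabla_xu)\) differently from the paper.

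\textbf{The paper's route.} It works on the cylinder \([\sigma\theta-4,\sigma\theta+1]\times B_2(X)\) and shows the front distance there is at least \(\chi\sigma/3\). It then applies the full leading-edge bound \eqref{cont:eq:upper-solution}, including its Gaussian factor \(e^{-r^2/(8t)}\), to get \(u\le C\exp(-\chi\sigma/6-\chi^2\sigma/144)\). After the interior estimates, the factor \(1+|p|\lesssim 2+\chi\) is absorbed by \(e^{-\chi^2\sigma/144}\). Only in the last line is \(\chi\sigma\ge K_{\rm ext}\sqrt\sigma\) used. So a single estimate does everything and no case split is needed.

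\textbf{Your route.} You replace \(\chi\sigma\) by \(K_{\rm ext}\sqrt\sigma\) at the outset and keep only the factor \((1+r)e^{-r}\). This makes the \(C^0\) bound independent of \(\chi\), which is why \(|p|\) becomes an obstacle. You then need a case split at \(|p|=2\), with the global heat comparison \eqref{cont:eq:heat-comparison} as a second input for large \(|p|\). This works.

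\textbf{One step to make explicit.} A bound of the form \(e^{-c|p|^2\sigma}\) does not by itself beat \(e^{-K_{\rm ext}\sqrt\sigma/6}\) uniformly in \(K_{\rm ext}\). You can close this in either of two ways:
\begin{itemize}
\item Use \(K_{\rm ext}\sqrt\sigma\le\chi\sigma<|p|\sigma\), together with the fact that your constant is \(c\approx 1/4\) for \(|p|\ge2\) and large \(\sigma\). Then \(c|p|^2\ge |p|/2>|p|/6\).
\item Estimate \(|p|\min\{e^{-K_{\rm ext}\sqrt\sigma/4},e^{-c|p|^2\sigma}\}\) separately according to whether \(|p|\le e^{K_{\rm ext}\sqrt\sigma/12}\) or not.
\end{itemize}
As written, the independence of the constant from \(K_{\rm ext}\) in the large-\(|p|\) case is asserted rather than shown.

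\textbf{The case split is avoidable.} You do not need the Gaussian factor to remove it. Simply keep the linear lower bound \(r\ge\chi\sigma/3\) instead of \(r\ge K_{\rm ext}\sqrt\sigma/2\). Then \((1+r)e^{-r}\le Ce^{-3r/4}\) gives \(u\le Ce^{-\chi\sigma/4}\) on the cylinder. Since \((2+\chi)e^{-\chi\sigma/4}\le Ce^{-\chi\sigma/6}\) uniformly for \(\sigma\ge1\), you reach the stated bound directly.
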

	
	\begin{proof}
		Write
		\(\chi=\chi(p)\), and suppose
		\(\chi\sqrt{\sigma}\ge K_{\rm ext}\ge30\).
		Given \((\theta,z)\in\overline{\mathcal Q^+}\), set
		\(X=2p\sigma\theta+\sqrt{\sigma}\,z\), and consider the fixed local parabolic cylinder
		\(s\in[\sigma\theta-4,\sigma\theta+1]\), \(Y\in B_2(X)\).
		Its corresponding original physical time is \(1+s\).
		
		We first show that this cylinder remains at least \(c\chi\sigma\) ahead of the front, for some fixed \(c>0\).
		After increasing \(\Sigma_1\) if necessary, every point of this
		cylinder satisfies
		\(\sigma/4\le s\), \(1+s\le2\sigma\), and
		\(1+s\ge T_{\rm le}\), where \(T_{\rm le}\) is the starting time
		in Lemma~\ref{cont:lem:upper-leading-edge}. Moreover, since
		\(|z|\le3\), \(|Y-X|\le2\), and \(s\le\sigma\theta+1\), the front
		distance satisfies
		\[
		\begin{split}
			r'&:=|Y|-R_0(1+s)
			=|Y|-2s-2+k\log(1+s)\\
			&\ge 2\chi\sigma\theta-3\sqrt{\sigma}-6\\
			&\ge \frac23\chi\sigma-3\sqrt{\sigma}-6
			\ge \frac13\chi\sigma.
		\end{split}
		\]
		Here the last inequality follows directly from
		\(\chi\sqrt{\sigma}\ge K_{\rm ext}\ge30\) and \(\sigma\ge1\).
		
		Equation~\eqref{cont:eq:upper-solution}, applied at physical time
		\(1+s\), therefore gives throughout the cylinder
		\begin{equation}\label{cont:eq:physical-cylinder-smallness}
			u(s,Y)
			\le
			C\exp\!\left(
			-\frac{\chi\sigma}{6}
			-\frac{\chi^2\sigma}{144}
			\right).
		\end{equation}
		Indeed, the factor \(1+r'\) is absorbed into half of the linear
		exponential, while \(r'\ge\chi\sigma/3\) and \(1+s\le2\sigma\)
		give the displayed linear and quadratic exponents.
		Standard interior parabolic estimates for
		\(u_\sigma-\Delta u=u-u^2\)
		then transfer this pointwise smallness to the
		derivatives on a smaller fixed cylinder:
		\begin{equation}\label{cont:eq:physical-derivative-smallness}
			u(\sigma\theta,X)
			+|\nabla_xu(\sigma\theta,X)|
			+|D_x^2u(\sigma\theta,X)|
			+|u_\sigma(\sigma\theta,X)|
			\le
			C\exp\!\left(
			-\frac{\chi\sigma}{6}
			-\frac{\chi^2\sigma}{144}
			\right).
		\end{equation}
		Here and below the constants in these local estimates are uniform in
		\(p\).
		
		The chain rule gives
		\(\nabla_z\mathcal V
		=\sigma^{3/2}\nabla_xu(\sigma\theta,X)\) and
		\(\partial_\theta\mathcal V
		=\sigma^2(u_\sigma+2p\cdot\nabla_xu)(\sigma\theta,X)\).
		Since \(|p|=1+\chi\),
		\[
		\begin{aligned}
			\|\mathcal V\|_{C^1_{\theta,z}(\overline{\mathcal Q^+})}
			&\le C\sigma^2(2+\chi)
			e^{-\chi\sigma/6-\chi^2\sigma/144}\\
			&\le C\sigma^2e^{-\chi\sigma/6},
		\end{aligned}
		\]
		where the last inequality uses
		\((2+\chi)e^{-\chi^2\sigma/144}\le C\)
		uniformly for \(\sigma\ge1\) and \(\chi\ge0\).
		On the fixed cylinder, the
		\(C^{\alpha/2,\alpha}\) norm is controlled by
		the \(C^1_{\theta,z}\) norm.
		Using \(\chi\sqrt{\sigma}\ge K_{\rm ext}\)
		now gives \eqref{cont:eq:potential-holder}.
	\end{proof}

	We now upgrade the uniform \(C^0\) Gaussian approximation to spatial
	\(C^2\) control on a smaller cylinder.
	
	\begin{lemma}[Local \(C^2\) Gaussian approximation]
		\label{cont:lem:local-c2-gaussian-recovery}
		Let \(K_{\rm ext}\ge30\),
		\(\sigma\ge\max\{\Sigma_0,\Sigma_1\}\), and
		\(\chi(p)\sqrt{\sigma}\ge K_{\rm ext}\).
		Then
		\begin{equation}\label{cont:eq:profile-c2}
			\|\mathcal W-\mathcal K_\theta\|_
			{C_z^2(\overline{\mathcal Q^-})}
			\le
			C\left(
			\|\mathcal W-\mathcal K_\theta\|_
			{C^0(\overline{\mathcal Q^+})}
			+
			\|\mathcal V\|_
			{C^{\alpha/2,\alpha}(\overline{\mathcal Q^+})}
			\right),
		\end{equation}
		where \(C\) is independent of \(K_{\rm ext}\), \(\sigma\), and \(p\).
		Here \(C_z^2\) denotes uniform control of the function and its first
		spatial derivatives, with the Hessian measured in operator norm.
	\end{lemma}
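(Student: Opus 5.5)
We will treat the difference \(D=\mathcal W-\mathcal K_\theta\) as the solution of a linear inhomogeneous heat equation on \(\mathcal Q^+\) and apply interior parabolic Schauder estimates. Since \(\mathcal K_\theta\) solves the free heat equation exactly, \eqref{cont:eq:galilean-pde} gives
\[
(\partial_\theta-\Delta_z)D=-\mathcal V\mathcal W=-\mathcal V D-\mathcal V\mathcal K_\theta .
\]
We will use the form \((\partial_\theta-\Delta_z+\mathcal V)D=-\mathcal V\mathcal K_\theta\). This is a uniformly parabolic equation with constant principal part, and the zeroth-order coefficient \(\mathcal V\) is controlled in \(C^{\alpha/2,\alpha}(\overline{\mathcal Q^+})\) by Lemma~\ref{cont:lem:rescaled-potential-smallness}. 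The admissible range of \(\sigma\) and \(K_{\rm ext}\) makes this bound at most some fixed constant, say \(\|\mathcal V\|_{C^{\alpha/2,\alpha}}\le1\), after possibly enlarging \(\Sigma_1\). The Schauder constant therefore depends only on \(N\), \(\alpha\) and the fixed cylinders, and not on \(\sigma\), \(p\) or \(K_{\rm ext}\).

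The key steps are as follows.

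\emph{First step.} Apply the interior Schauder estimate from \(\mathcal Q^+\) to \(\mathcal Q^0\), and then to \(\mathcal Q^-\):
\[
\|D\|_{C^{2+\alpha,1+\alpha/2}(\overline{\mathcal Q^-})}\le C\bigl(\|D\|_{C^0(\overline{\mathcal Q^+})}+\|\mathcal V\mathcal K_\theta\|_{C^{\alpha/2,\alpha}(\overline{\mathcal Q^+})}\bigr).
\]
To be careful about the requirement of a Hölder bound on the unknown, first use \(W^{2,1}_p\) estimates with the bounded coefficient \(\mathcal V\) on \(\mathcal Q^+\to\mathcal Q^0\). Parabolic Sobolev embedding then gives a \(C^{\alpha/2,\alpha}\) bound on \(D\) in \(\mathcal Q^0\) in terms of \(\|D\|_{C^0}\) and \(\|\mathcal V\mathcal K_\theta\|_{L^\infty}\). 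Afterwards apply Schauder from \(\mathcal Q^0\) to \(\mathcal Q^-\).

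\emph{Second step.} Bound the source term using the product rule for Hölder norms:
\[
\|\mathcal V\mathcal K_\theta\|_{C^{\alpha/2,\alpha}}\le\|\mathcal V\|_{C^{\alpha/2,\alpha}}\|\mathcal K_\theta\|_{C^{\alpha/2,\alpha}}.
\]
On \(\overline{\mathcal Q^+}\) we have \(\theta\ge1/3\), so \(\mathcal K_\theta\) is smooth with fixed bounds.

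\emph{Third step.} Read off the spatial \(C^2\) part of the parabolic norm. This controls \(D\), \(\nabla_zD\) and \(D_z^2D\), the last in operator norm since all norms on symmetric matrices are equivalent. Together these give \eqref{cont:eq:profile-c2}.

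The main obstacle is uniformity: the local constants must not depend on the terminal pair \((\sigma,p)\). This is why we keep \(\mathcal V\) as a coefficient with an a priori bounded Hölder norm rather than absorbing \(\mathcal W\) into it. It is also why we note that \(\mathcal W\) itself is bounded on \(\mathcal Q^+\) by \(\|D\|_{C^0}+\sup\mathcal K_\theta\). By Lemma~\ref{cont:lem:uniform-gaussian-approximation}, \(\|D\|_{C^0}\) is a fixed constant, which would allow the alternative splitting \(-\mathcal V\mathcal W\) as the source if needed. Everything else is standard interior regularity on fixed cylinders.
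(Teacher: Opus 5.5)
Your proposal is correct and is essentially the paper's argument: interior $W^{2,1}_q$ estimates with parabolic embedding, followed by an interior Schauder estimate on the nested cylinders $\mathcal Q^-\Subset\mathcal Q^0\Subset\mathcal Q^+$. The only difference is bookkeeping: the paper keeps the pure heat operator with source $-\mathcal V\mathcal W$, which is exactly the ``alternative splitting'' you mention, and first derives a uniform H\"older bound for $\mathcal W$ from the $L^\infty$ bounds of the two preceding lemmas; you instead put $\mathcal V$ into the operator and keep the explicit source $-\mathcal V\mathcal K_\theta$, which needs only a uniform bound on $\|\mathcal V\|_{C^{\alpha/2,\alpha}(\overline{\mathcal Q^+})}$, and this bound holds for every admissible $\sigma$ without enlarging $\Sigma_1$ (which is fixed in the statement), since $K_{\rm ext}\ge30$ gives $C\sigma^2e^{-K_{\rm ext}\sqrt\sigma/6}\le C\sigma^2e^{-5\sqrt\sigma}\le C'$.
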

	
	\begin{proof}
		The preceding two lemmas give uniform \(L^\infty\) bounds for
		\(\mathcal W\) and \(\mathcal V\) on \(\mathcal Q^+\).
		Interior parabolic \(W^{2,1}_q\) estimates and parabolic embedding,
		with \(q\) chosen sufficiently large for the fixed exponent
		\(\alpha\), give
		\(\|\mathcal W\|_
		{C^{\alpha/2,\alpha}(\overline{\mathcal Q^0})}
		\le C\).
		Hence
		\[
		\|\mathcal V\mathcal W\|_
		{C^{\alpha/2,\alpha}(\overline{\mathcal Q^0})}
		\le
		C\|\mathcal V\|_
		{C^{\alpha/2,\alpha}(\overline{\mathcal Q^+})}.
		\]

		Set \(H=\mathcal W-\mathcal K_\theta\). Since
		\((\partial_\theta-\Delta_z)\mathcal K_\theta=0\),
		\eqref{cont:eq:galilean-pde} gives
		\((\partial_\theta-\Delta_z)H=-\mathcal V\mathcal W\).
		The interior Schauder estimate on
		\(\mathcal Q^-\Subset\mathcal Q^0\) yields
		\[
		\|H\|_{C_z^2(\overline{\mathcal Q^-})}
		\le
		C\left(
		\|H\|_{C^0(\overline{\mathcal Q^0})}
		+
		\|\mathcal V\mathcal W\|_
		{C^{\alpha/2,\alpha}(\overline{\mathcal Q^0})}
		\right),
		\]
		and \eqref{cont:eq:profile-c2} follows.
	\end{proof}
	
	We now combine the preceding estimates to establish the exterior logarithmic-Hessian bound.
	
	\begin{proof}[Proof of Proposition~\ref{cont:prop:exterior}]
		At \((\theta,z)=(1,0)\),
		\[
		\mathcal K_1(0)=(4\pi)^{-N/2},\qquad
		\nabla_z\mathcal K_1(0)=0,\qquad
		D_z^2\mathcal K_1(0)
		=-\frac{\mathcal K_1(0)}{2}\id.
		\]
		Combining Lemmas~\ref{cont:lem:uniform-gaussian-approximation},
		\ref{cont:lem:rescaled-potential-smallness}, and
		\ref{cont:lem:local-c2-gaussian-recovery}, we first choose
		\(K_{\rm ext}\ge30\) sufficiently large and then
		\(\Sigma_{\rm ext}\) sufficiently large so that, for every
		\(\sigma\ge\Sigma_{\rm ext}\),
		\[
		\|\mathcal W-\mathcal K_\theta\|_
		{C_z^2(\overline{\mathcal Q^-})}
		\le \frac{\mathcal K_1(0)}8.
		\] Hence
		\[
		\frac{7\mathcal K_1(0)}8
		\le\mathcal W(1,0)
		\le\frac{9\mathcal K_1(0)}8,
		\qquad
		D_z^2\mathcal W(1,0)
		\le-\frac{3\mathcal K_1(0)}8\id.
		\]
		Since
		\[
		D_z^2\log\mathcal W
		=
		\frac{D_z^2\mathcal W}{\mathcal W}
		-
		\frac{\nabla_z\mathcal W\otimes\nabla_z\mathcal W}
		{\mathcal W^2},
		\]
		the second term is negative semidefinite, and therefore
		\(D_z^2\log\mathcal W(1,0)\le-\id/3\le-\id/4\).
		
		Finally, the logarithm of the conjugating factor in
		\eqref{cont:eq:unnormalized-galilean} and \eqref{cont:eq:galilean-profile}
		is affine in \(z\), so
		\begin{equation}\label{cont:eq:hessian-scaling}
			D_z^2\log\mathcal W(\theta,z)
			=
			\sigma D_x^2\log u
			(\sigma\theta,2p\sigma\theta+\sqrt{\sigma}\,z).
		\end{equation}
		Evaluating at \((\theta,z)=(1,0)\) gives
		\eqref{cont:eq:exterior-hessian}.
	\end{proof}
	
	\subsection{Whole-space completion and level-set geometry}
	\label{cont:sec:completion}
	
	We now combine the estimates in the four spatial regions to prove Theorem~\ref{cont:thm:whole-space}, and then derive Corollary~\ref{cont:cor:all-levels}.
	Here we return to the original physical time \(t\), and \(u(t,x)\)
	again denotes the original solution. The preceding analysis in
	Sections~\ref{cont:sec:near-critical}--\ref{cont:sec:exterior} uses the elapsed time
	\(\sigma=t-1\).
	
	\begin{proof}[Proof of Theorem~\ref{cont:thm:whole-space}]
		Fix \(K_{\rm ext}\) from Proposition~\ref{cont:prop:exterior},
		and apply Proposition~\ref{cont:prop:through-diffusive} with
		\(K=4K_{\rm ext}\). Let \(L_K\) be its lower cutoff, and choose
		\(L_{\rm fr}\ge\max\{L_0,L_K\}\).
		
		Proposition~\ref{prop:innerconcavity} covers
		\(\rho\le R_0(t)-L_0\), while Proposition~\ref{prop:frontlog}
		covers \(|r|\le L_{\rm fr}\) after its fixed-parameter threshold.
		These regions overlap because \(L_{\rm fr}\ge L_0\). Moreover,
		the lower boundary \(r=L_K\) of the diffusive leading edge lies
		in the fixed-front region because \(L_{\rm fr}\ge L_K\).
		Hence the bounds in the invaded interior, the fixed-front region, and
		the diffusive leading edge cover
		\(\rho\le R_0(t)+4K_{\rm ext}\sqrt t\) once all their fixed
		starting times have been reached.
		
		In original physical time, the Gaussian exterior begins at
		\(2(t-1)+2K_{\rm ext}\sqrt{t-1}\). The difference between the
		outer radius of the diffusive leading edge and the inner radius of the Gaussian exterior is
		\[
		\begin{split}
			&R_0(t)+4K_{\rm ext}\sqrt t
			-\left[2(t-1)+2K_{\rm ext}\sqrt{t-1}\right]\\
			&=2-k\log t+4K_{\rm ext}\sqrt t
			-2K_{\rm ext}\sqrt{t-1}\\
			&\ge2-k\log t+2K_{\rm ext}\sqrt t>0
		\end{split}
		\]
		for all sufficiently large \(t\). Thus the four regions cover
		the whole space after one fixed time.
		
		The bound in the invaded interior is
		\(-c_{\rm in}t^{-1}e^{-\beta_*t}\id\), the fixed-front bound is
		\(-c_{L_{\rm fr}}t^{-1}\id\), and the bounds in the diffusive leading edge and the Gaussian
		exterior are respectively \(-c_Kt^{-1}\id\) and
		\(-[4(t-1)]^{-1}\id\). Since
		\(1/(t-1)\ge1/t\ge e^{-\beta_*t}/t\), all four estimates imply the
		common weaker bound \eqref{cont:eq:whole-space-hessian} with
		\(c=\min\{c_{\rm in},c_{L_{\rm fr}},c_K,1/4\}\), after increasing
		the physical starting time past the finitely many fixed thresholds.
		
		Finally, integration of \(D_x^2\log u<0\) along straight segments
		in \(\R^N\) gives strict log-concavity.
	\end{proof}
	
	\begin{proof}[Proof of Corollary~\ref{cont:cor:all-levels}]
		Set \(q=\log u\). Spatial decay implies that \(u(t,\cdot)\) attains
		its maximum. If \(x\ne y\), then
		\[
		(\nabla q(x)-\nabla q(y))\cdot(x-y)
		=
		\int_0^1
		D_x^2q\bigl(y+s(x-y)\bigr)[x-y,x-y]\,ds
		<0.
		\]
		Hence \(q\), and therefore \(u\), has at most one critical point.
		Since a maximum is attained, this critical point exists and is the
		unique global maximizer. At that point, \(\nabla q=0\), and hence
		\(D_x^2u=uD_x^2q<0\), so the maximizer is nondegenerate.
		
		If \(0<\lambda<M(t)\), no point of the level set
		\(\{u(t,\cdot)=\lambda\}\) can be critical, since the only critical
		point is the global maximizer. Thus \(\lambda\) is a regular value,
		and the level set is a smooth hypersurface. Spatial decay makes the
		superlevel set \(\{u(t,\cdot)\ge\lambda\}\) compact, while
		log-concavity makes it convex.
		
		With respect to the outward unit normal
		\(\nu_{\rm out}=-\nabla_xu/|\nabla_xu|\), every nonzero tangent
		vector \(e\) satisfies
		\[
		\mathrm{II}[e,e]
		=
		-\frac{D_x^2u[e,e]}{|\nabla_xu|}
		=
		-\frac{uD_x^2\log u[e,e]}{|\nabla_xu|}
		>0.
		\]
		Hence every principal curvature is positive, and the level
		set is strictly convex.
	\end{proof}

	\appendix
	\section*{Appendix}
	\refstepcounter{section}\label{app:sec3}

	\subsection{Moving-frame and self-similar calculations}\label{app:transform}
	
	For completeness, we record the intermediate equations leading to
	\eqref{eq:v} and \eqref{eq:W}.
	
	\paragraph{Moving frame.}
	Set \(U(t,r,\omega)=u(t,R_0(t)+r,\omega)\). Then
	\[
	U_t
	=
	U_{rr}
	+
	\left(
	2-\frac{k}{t}
	+\frac{N-1}{r+R_0(t)}
	\right)U_r
	+
	\frac{\DS U}{(r+R_0(t))^2}
	+
	U-U^2.
	\]
	To remove the critical exponential factor in the leading edge,
	write \(U(t,r,\omega)=e^{-r}v(t,r,\omega)\).
	Differentiating gives \eqref{eq:v}.

	\paragraph{Self-similar variables and rear-boundary shift.}
	We use the shifted self-similar variables defined in
	\eqref{eq:logshift} and set
	\(\widetilde w(\tau,\xi,\omega)=t^{-1/2}v(t,r,\omega)\).
	Since \(s'(\tau)+s(\tau)/2=-3e^{-\tau/2}\),
	the translated function satisfies
	\begin{equation}\label{eq:shifted_selfsimilar}
		\begin{aligned}
			\widetilde w_\tau
			={}&
			\widetilde w_{\xi\xi}
			+\frac{\xi}{2}\widetilde w_\xi
			+(h-3)e^{-\tau/2}\widetilde w_\xi
			-\left(h+\frac12\right)\widetilde w
			\\
			&\quad
			+\frac{e^\tau}{D^2}\DS\widetilde w
			-\exp\left(
			\frac{3\tau}{2}
			-(\xi+s)e^{\tau/2}
			\right)\widetilde w^2.
		\end{aligned}
	\end{equation}
	Here \(D\) and \(h\) are as in \eqref{eq:D}--\eqref{eq:h}.

	\paragraph{Gaussian conjugation.}
	Finally, set
	\(\widetilde w(\tau,\xi,\omega)
	=
	e^{-\xi^2/8}W(\tau,\xi,\omega)\).
	Note that
	\[
	e^{\xi^2/8}
	\left(\partial_{\xi\xi}+\frac{\xi}{2}\partial_\xi\right)
	\left(e^{-\xi^2/8}W\right)
	=
	W_{\xi\xi}
	-\left(\frac{\xi^2}{16}+\frac14\right)W.
	\]
	Substituting this identity into \eqref{eq:shifted_selfsimilar} yields \eqref{eq:W}, with the coefficients given in \eqref{eq:M}--\eqref{eq:alphaB}.
	The cancellation \((N-1)/2-k=-3/2\) gives
	\begin{equation}\label{eq:hexact}
		h+\frac32
		=
		\frac{
			(N-1)\bigl((k+3)\tau-\xi e^{\tau/2}\bigr)
		}{2D}.
	\end{equation}
	Since \(D \ge e^\tau\) and \(h\) is uniformly bounded,
	\eqref{eq:hexact} and \eqref{eq:l} imply the coefficient bounds \eqref{eq:lbounds}.

	\subsection{Three-region verification of the scalar supersolution}\label{app:barriercheck}
	
	Here we verify the supersolution inequality for the function
	\(\Phi\) constructed in the proof of Lemma~\ref{lem:barrier}.
	
	We shall use
	\begin{equation}\label{eq:profiles}
		\MA\phi_0=0,\qquad -\varphi''=\Lambda\varphi,\quad
		\Lambda=\left(\frac{\pi}{2a_0}\right)^2,\qquad
		\MA G=\left(\frac{3\xi^2}{64}-\frac58\right)G.
	\end{equation}
	The present coefficient bounds \eqref{eq:lbounds} give
	\begin{equation}\label{eq:regionalcoeff}
		|l_1|\le Ce^{-\tau/2},\qquad
		|l_2|\le C(1+\xi)e^{-\tau/2},
	\end{equation}
	because \(\tau e^{-\tau}\le Ce^{-\tau/2}\).
	
	The three-region calculation in
	\cite[Section~3.2, equations~(31)--(33)]{RRM2019}
	now applies with \(m\) in place of both cutoff amplitudes.
	Near zero, \(\varphi\ge1/\sqrt2\) and
	\(|\varphi'|\le\sqrt\Lambda\,\varphi\), so
	\[
	\PP\Phi\ge
	\bigl[m'+(\Lambda-C\sqrt\Lambda-C)m\bigr]\varphi
	-C\zeta e^{-\tau/2}
	\qquad(0\le\xi\le a_0/2).
	\]
	Choose \(a_0\) small enough that
	\(\Lambda-C\sqrt\Lambda-C\ge1\).

	In the far field, where \(\gamma_1=0\) and \(\gamma_2=1\),
	we have \(\Phi=mG+\zeta\phi_0\). Since
	\(G'=-(\xi/8)G\) and
	\(\MA G=(3\xi^2/64-5/8)G\),
	\[
	\frac{\PP(mG)}{G}
	=m'+m\left(
	\frac{3\xi^2}{64}-\frac58
	+\frac{\xi l_1}{8}-l_2
	\right)
	=(m'+m)
	+m\left(
	\frac{3\xi^2}{64}-\frac{13}{8}
	+\frac{\xi l_1}{8}-l_2
	\right).
	\]
	By \eqref{eq:regionalcoeff}, after fixing \(R\ge2\) sufficiently
	large, the second bracket is nonnegative for all
	\(\tau\ge\tau_*\) and \(\xi\ge R\).
	Moreover, \(\MA\phi_0=0\) and \(\zeta'\ge0\), while
	\[
	\frac{|l_1\phi_0'|+|l_2\phi_0|}{G}
	\le Ce^{-\tau/2}(1+\xi+\xi^2)e^{-\xi^2/16}
	\le Ce^{-\tau/2}.
	\]
	Hence
	\[
	\PP\Phi\ge
	\bigl(m'+m-C\zeta e^{-\tau/2}\bigr)G
	\qquad(\xi\ge R).
	\]

	On the remaining fixed interval \(I=[a_0/2,R]\), the cutoff
	terms are \(O(m)\), exactly as in the transition-region calculation
	\citep[p.~7278]{RRM2019}, and \(\delta_I:=\min_I\phi_0>0\). Thus
	\[
	\PP\Phi\ge\delta_I\zeta'-C_I(m+\zeta e^{-\tau/2})
	\qquad(\xi\in I),
	\]
	where we have used \(m'\ge-m\) and \(\zeta'\ge0\), which follow
	from \eqref{eq:ODE}. Consequently, after choosing \(a_0\), the cutoffs
	and \(R\), all three regions are covered by
	\[
	m'+m\ge C\zeta e^{-\tau/2},\qquad
	\zeta'\ge C(m+\zeta e^{-\tau/2}).
	\]
	Taking \(C_0\) sufficiently large in \eqref{eq:ODE} therefore gives
	\(\PP\Phi\ge0\).
	
	\subsection{Finite-time Gaussian tails}\label{app:tails}
	
	\begin{proof}[Proof of Lemma~\ref{lem:tails}]
		Heat-kernel comparison gives
		\(0\le u(t,x)\le e^{t-1}(G_{t-1}*u_0)(x)\).
		Choose \(r_0>0\) such that \(t_*-r_0^2>1\), and set
		\(Q_r(t,x)=(t-r^2,t]\times B_r(x)\).
		Standard interior parabolic regularity gives
		\[
		\sup_{Q_{r_0/2}(t,x)}
		\bigl(|\nabla_xu|+|D_x^2u|\bigr)
		\le C\|u\|_{L^\infty(Q_{r_0}(t,x))}.
		\]
		For \((s,y)\in Q_{r_0}(t,x)\), we have
		\(|y|\ge |x|-r_0\) and \(s\le t\). Hence the heat-kernel bound
		and \(\operatorname{supp}u_0\subset B_{R_*}\) imply, for large
		\(|x|\),
		\[
		\|u\|_{L^\infty(Q_{r_0}(t,x))}
		\le
		C\exp\left(
		-\frac{(|x|-\overline R)^2}{4(t-1)}
		\right),
		\qquad \overline R=R_*+r_0.
		\]
		This proves \eqref{eq:physicaltail}.
		
		For \(\|A\|_{\mathrm{op}}\le1\), the identity
		\begin{equation*}
			X_A^2u=(Ax)^TD_x^2u(Ax)+(A^2x)\cdot\nabla_xu
		\end{equation*}
		bounds \(X_A^2u\) by a quadratic polynomial in \(\rho\) times
		the Gaussian in \eqref{eq:physicaltail}. The inverse of
		\eqref{eq:transform} then gives
		\[
		|X_A^2W|\le
		C t^{-1/2}(\rho^2+\rho)
		\exp\left(
		r+\frac{\xi^2}{8}
		-\frac{(\rho-\overline R)^2}{4(t-1)}
		\right).
		\]
		Write \(d(t)=2t-(k+3)\log t-\overline R\). Since
		\(\rho-\overline R=\sqrt t\,\xi+d(t)\) and
		\(r=\sqrt t\,\xi-3\log t\), the coefficient of \(-\xi^2\)
		in the exponent is
		\(t/[4(t-1)]-1/8>1/8\).
		All remaining linear terms in the exponent and the polynomial
		prefactor have uniformly bounded coefficients on \([t_*,T_1]\).
		They can therefore be absorbed while retaining the smaller Gaussian
		coefficient \(3/32\). By smoothness, enlarging the constant covers
		bounded \(\xi\), and hence
		\[
		\sup_{\substack{\tau_*\le\tau\le\log T_1\\
				\omega\in\Sph,\ \|A\|_{\mathrm{op}}\le1}}
		(X_A^2W)_+(\tau,\xi,\omega)
		\le C_{T_1}e^{-3\xi^2/32},
		\qquad \xi\ge0.
		\]
		This proves \eqref{eq:qtail}.
	\end{proof}
	
	\section*{Acknowledgments}
	
	The authors thank Professor Xinan Ma for his continued interest in
	this problem and for motivating us to pursue it.
	
	\section*{Declaration}
	
	The authors initiated and led the research program, formulated
	the mathematical problem, developed the results, and provided
	the proofs. ChatGPT suggested some potentially relevant references
	and provided summaries of selected works. After independently
	examining these references and assessing their relevance to our
	ongoing research, we found that some provided valuable background
	and insights that helped inspire aspects of the present work.
	The authors take full responsibility for the content of the manuscript.

\end{document}